\UseRawInputEncoding
\documentclass[12pt, reqno]{amsart}
\usepackage[margin=1in]{geometry}
\usepackage{amssymb,latexsym,amsmath,amscd,amsfonts}
\usepackage{latexsym}
\usepackage[mathscr]{eucal}
\usepackage{bm}
\usepackage{mathptmx}
\usepackage{amssymb}
\usepackage{amsthm}
\usepackage{dcolumn}
\usepackage[all]{xy}
\usepackage{enumitem}
\usepackage[utf8]{inputenc}

\def \qed {\hfill \vrule height6pt width 6pt depth 0pt}
\def\textmatrix#1&#2\\#3&#4\\{\bigl({#1 \atop #3}\ {#2 \atop #4}\bigr)}
\def\dispmatrix#1&#2\\#3&#4\\{\left({#1 \atop #3}\ {#2 \atop #4}\right)}
\newcommand{\beg}{\begin{equation}}
	\newcommand{\eeg}{\end{equation}}
\newcommand{\ben}{\begin{eqnarray*}}
	\newcommand{\een}{\end{eqnarray*}}

\newcommand{\C}{\mathbb C}

\newcommand{\N}{\mathbb N}

\newcommand{\E}{\mathbb E}

\newcommand{\Pe}{\mathbb P}
\newcommand{\D}{\mathbb D}
\newcommand{\G}{\mathbb G}

\newcommand{\lm}{\lambda}

\newcommand{\HS}{\mathcal{H}}

\newcommand{\al}{\alpha}
\newcommand{\DC}{\overline{\mathbb{D}}}

\newcommand{\la}{\langle}
\newcommand{\ra}{\rangle}
\newcommand{\LS}{\mathscr{L}}
\newcommand{\KS}{\mathcal{K}}

\newtheorem{thm}{Theorem}[section]
\newtheorem{cor}[thm]{Corollary}
\newtheorem{lem}[thm]{Lemma}

\newtheorem{prop}[thm]{Proposition}
\numberwithin{equation}{section} \theoremstyle{definition}
\newtheorem{defn}[thm]{Definition}

\def\textmatrix#1&#2\\#3&#4\\{\bigl({#1 \atop #3}\ {#2 \atop #4}\bigr)}
\def\dispmatrix#1&#2\\#3&#4\\{\left({#1 \atop #3}\ {#2 \atop #4}\right)}

\begin{document}
	
	\title[Function theoretic aspects of the symmetrized polydisc and generalization]{Function theoretic aspects of the symmetrized polydisc and generalization} 
	
	\author{SOURAV PAL AND NITIN TOMAR}
	
	\address[Sourav Pal]{Mathematics Department, Indian Institute of Technology Bombay,
		Powai, Mumbai - 400076, India.} \email{sourav@math.iitb.ac.in}
	
			\address[Nitin Tomar]{Statistics and Mathematics unit, Indian Statistical Institute Bangalore, Karnataka - 560059, India.} \email{tomarnitin414@gmail.com, nitin\_pd@isibang.ac.in}	
	
	\keywords{Symmetrized polydisc, realization, interpolation, extension, Toeplitz corona theorem}	
	
	\subjclass[2020]{32A70, 47A13, 47A57, 46E22, 47B32}	
	
	\begin{abstract}
		We introduce Schur-Agler type class for the symmetrized polydisc $\mathbb{G}_d$ and establish a realization theorem for functions in this class. We prove an interpolation theorem on $\mathbb{G}_d$ with interpolating functions belonging to the associated Schur-Agler type class. Moreover, Toeplitz corona and extension theorems are established for $\mathbb{G}_d$. We also extend the realization, interpolation, Toeplitz corona and extension theorems to another domain $\Theta_d$, which is generalization of the domain $\mathbb{G}_d$.
	\end{abstract}	
	
	\maketitle
	
	\section{Introduction}
	
	\noindent The classical Nevanlinna-Pick interpolation theorem \cite{Nevanlinna, Pick} for the disc $\D=\{z\in \C: |z|<1\}$ states that given distinct points $z_1,\dots,z_n \in \D$ and target values $\lambda_1, \dotsc,\lambda_n \in \overline{\D}$, there exists a holomorphic function $f:\D\to\overline{\D}$ satisfying $f(z_i)=\lambda_i$ for $1\leq i \leq n$ if and only if the Pick matrix
	\[
	\bigl[(1-\lambda_i\overline{\lambda}_j)(1-z_i\overline{z}_j)^{-1}\bigr]_{i,j=1}^n
	\]
	is positive semi-definite. In the language of kernels, the positivity of Pick matrix is equivalent to saying that
	$ \bigl[(1 - \lambda_i \overline{\lambda}_j)k_S(z_i, z_j)\bigr]_{i,j=1}^n \geq 0$, where $k_S(z,w) = (1 - z\overline{w})^{-1}$ is the Szeg\H{o} kernel on $\D$. Recall that a kernel $k$ on a non-empty set $X$ is a positive semi-definite map $k: X \to X \to \C$ with $k(x, x) \ne 0$ for every $x \in X$. This kernel dependent formulation provides a natural framework for studying interpolation problems on more general domains in $\C^d$. For example, Abrahamse \cite{AbrahamseI} presented a similar interpolation theorem on an $m$-holed planar domain $R$ in terms of a family of Pick matrices. A more general approach to interpolation via families of kernels can be found in \cite{Jury}. A criterion for Nevanlinna-Pick interpolation problem on the bidisc $\D^2$ was presented in \cite{Agler_McCarthyI, Agler_McCarthy} via a certain family of  kernels on $\D^2$. These interpolation results naturally motivate the following interpolation problem. Suppose $z_1,\dotsc,z_n$ are distinct points in a domain $\Omega\subseteq\C^d$ and $\lambda_1,\dotsc,\lambda_n\in\overline{\D}$. Does there exist a holomorphic function $f:\Omega\to\C$ satisfying an appropriate norm condition such that $f(z_i)=\lambda_i$ for $1\leq i\leq n$? The solution to this problem depends crucially on the geometry of the underlying domain $\Omega$. A natural approach to the Nevanlinna-Pick interpolation problem on a domain is to first identify an appropriate class of interpolating functions together with an intrinsic characterization of that class. Such a characterization is commonly referred to as a \emph{realization theorem}. Two classes of functions that arise naturally in this context are the Schur class and the Schur-Agler class. For a domain $\Omega\subseteq\C^d$, the \emph{Schur class} $S(\Omega)$ is defined as
	\[
	S(\Omega)=\{f\in \text{Hol}(\Omega): \|f\|_{\infty,\Omega}\leq 1\},
	\]
	where $\text{Hol}(\Omega)$ is the class of all holomorphic functions on $\Omega$ and $\|f\|_{\infty, \Omega}=\sup\{|f(z)|: z \in \Omega\}$. A distinguished subclass of the Schur class of the polydisc $\D^d$ is the Schur-Agler class given by
	\[
	SA(\D^d)=\{g\in \text{Hol}(\D^d):\|g(T_1, \dotsc, T_d)\|\leq 1
		\ \text{for every commuting strict contractions} \ T_1, \dotsc, T_d\}.
	\]
	With these terminologies in place, the interpolating functions in the classical Nevanlinna-Pick theorem belong precisely to the Schur class $S(\D)$. A realization theorem for this class (see \cite{Agler_McCarthy}) asserts that $f \in S(\D)$ if and only if there exist a Hilbert space $\HS$ and a unitary operator
	\[
	V=
	\begin{bmatrix}
		A&B\\
		C&D
	\end{bmatrix}:\C\oplus\HS\to\C\oplus\HS
	\]
	such that $f(z)=A+zB(I_{\HS}-zD)^{-1}C$. Realization theorems for Schur class functions have subsequently been established for several other domains, including the annulus \cite{Drit2007_I} and the bidisc \cite{Agler1990, Agler_McCarthy}. For the polydisc $\D^d$, $d\ge3$, the situation is significantly different in the sense that a Schur class function need not admit a realization formula. However, a realization formula is guaranteed for the smaller class, namely Schur-Agler class $SA(\D^d)$, e.g., see \cite{Agler_McCarthy}. This viewpoint was further developed in the abstract setting in the works \cite{Drit2007_I, Drit2007_II, Ball_Guerra, Tirtha_Bis_Ch, Bis_Ch}, where the domain $\Omega$ is replaced by an arbitrary set $X$ and the Schur class is replaced by a class of functions determined by a prescribed family of test functions on $X$. Several domains arising as images of classical domains under proper holomorphic mappings have attracted considerable attention in multivariable operator theory and function theory. Among them, the symmetrized bidisc \cite{AglerYoung}, the tetrablock \cite{Abouhajar} and the pentablock \cite{AglerIV} play a prominent role. The symmetrized bidisc $\G_2$ is the image of the bidisc $\D^2$ under the symmetrization map
	\[
	(z_1,z_2)\mapsto (z_1+z_2,z_1z_2),
	\]
	and provides a fundamental example of a non-convex domain with rich interpolation theory. The tetrablock $\E$ and the pentablock $\Pe$ are further examples of domains whose function theory exhibits connections with operator theory and matrix-valued interpolation. The realization and interpolation theorems for the symmetrized bidisc $\G_2$ were obtained in \cite{AglerYoung2017, Tirtha_Sau} with different methods. For the tetrablock $\E$, the realization and interpolation theorems were obtained by the authors of \cite{Jain}. For the pentablock $\Pe$, the realization and interpolation theorems were presented in \cite{Pal2026}. An application of realization and interpolation theorems includes the extension problems for domains. Let $V$ be a nonempty subset of a domain $\Omega\subset\C^d$. A function $f:V\to\C$ is called holomorphic on $V$ if it admits a holomorphic extension to some neighbourhood of $V$. One may then ask whether every such function on $V$ extends holomorphically to the whole domain $\Omega$. In the bounded setting, a more subtle question is whether every bounded holomorphic function on $V$ has an extension belonging to $H^\infty(\Omega)$ that preserves its supremum norm. This question is referred to as the norm-preserving extension problem on $\Omega$. This problem has been explored for several important domains in $\C^d$, namely the bidisc \cite{Agler_McCarthy_2003}, the symmetrized bidisc \cite{Tirtha_Sau}, the tetrablock \cite{Jain} and the pentablock \cite{Pal2026}. 
	
	\smallskip 
	
	Realization theorems for function classes on domains in $\C^d$ also provide an important framework to study the Toeplitz corona problem. The classical corona theorem for the algebra $H^\infty(\D)$ was established by Carleson \cite{Carleson}. More precisely, if $\varphi_1,\dotsc,\varphi_d\in H^\infty(\D)$ satisfy
	\begin{equation}\label{eqn_corona}
		\sup_{z\in\D}\left[|\varphi_1(z)|^2+\cdots+|\varphi_d(z)|^2\right]\geq \epsilon^2
	\end{equation}
	for some $\epsilon>0$, then there exist functions $f_1,\dotsc,f_d\in H^\infty(\D)$ such that
	$
	\varphi_1f_1+\cdots+\varphi_df_d=1.
	$
	An operator-theoretic version of this result was obtained by Arveson \cite{Arveson_TC}, where the pointwise condition \eqref{eqn_corona} is replaced by the operator inequality
	$
	T_{\varphi_1}T_{\varphi_1}^*+\cdots+T_{\varphi_d}T_{\varphi_d}^*\geq \epsilon^2I.
	$
	Here, $T_{\varphi}$ denotes the Toeplitz operator with symbol $\varphi$. This result, known as the Toeplitz corona theorem, has subsequently been extended to several domains, including the bidisc \cite{Agler_McCarthyI}, the Euclidean unit ball and the polydisc \cite{Amar}, the symmetrized bidisc \cite{Tirtha_Sau_II}, the tetrablock \cite{Jain} and the pentablock \cite{Pal_penta_TC}. Thus, realization theorems play crucial role to address several central problems in function theory such as interpolation, extension and Toeplitz corona problems on domains.

	\smallskip 
	
	In this article, we present the vector-valued realization, interpolation and Toeplitz corona theorems for the symmetrized polydisc together with an extension theorem. The symmetrized polydisc $\G_d$ is a domain in $\C^d$ that is the image of the polydisc $\D^d$ under the symmetrization map $\pi_d=(s_1, \dotsc, s_d): \C^d \to \C^d$, where
		\[
	s_i(z_1, \dotsc, z_d)=\underset{1 \leq \ell_1<\dotsc <\ell_i  \leq d}{\sum}z_{\ell_1}\dotsc z_{\ell_i} \quad (1 \leq i \leq d).
	\]	
	The domain $\G_d$ originated from the spectral Nevanlinna-Pick interpolation problem, which is a matrix-valued analogue of the classical Nevanlinna-Pick interpolation problem. Given distinct points $\lm_1,\dotsc, \lm_n$ in $\D$ and $B_1, \dotsc, B_n \in M_d(\D)$, the problem asks for the existence of a holomorphic $d\times d$ matrix-valued function $F$ on $\D$ such that $F(\lm_i)=B_i$ for $1 \leq i \leq n$ and the spectral radius $r(F(\lm))$ of $F(\lm)$ is at most $1$ for every $\lm \in \D$. The geometry of the spectral unit ball in $M_d(\C)$ plays a central role in this problem and the symmetrized polydisc $\G_d$ naturally appears as a domain of significant interest in this direction. In fact, a matrix $B \in M_d(\C)$ is in the spectral unit ball, that is, its spectral radius $r(B)<1$ if and only if $\pi_d(\lambda_1, \dotsc, \lambda_d) \in \mathbb{G}_d$, where $\lambda_1, \dotsc, \lambda_d$ are the eigenvalues of $B$. For further details on the complex geometry of $\G_d$ and its relationship with the spectral Nevanlinna-Pick problem, we refer the reader to the works \cite{AglerI, Costara2005, Costara2005_II}.

	\smallskip  
	
	A central theme of this article is the development of a Schur-Agler class for $\G_d$ using operator-theoretic techniques. Our approach relies on Costara's characterization of $\G_d$ through a family of rational functions $\{\Phi_\alpha:\alpha\in\overline{\mathbb D}\}$. We first revisit this characterization and prove in Section \ref{sec_02} an equivalent pointwise criterion showing that a point $s=(s_1, \dotsc, s_{d-1}, p) \in \G_d$ if and only if $|\Phi_\alpha(s)|<1$ for every $\alpha \in \DC$. Although this characterization follows from the ideas in Costara's work \cite{Costara2005}, it does not appear explicitly in the literature and serves as a starting point for the operator-theoretic framework discussed in this paper. It naturally leads to an operator-valued analogue of these characterizations and motivates the introduction of a distinguished class $\mathfrak{M}_{\G_d}$ of commuting operator $d$-tuples associated with $\mathbb G_d$. We then introduce the notion of admissible kernels on $\G_d$, extending the classical Agler kernel framework to the present setting. These kernels provide the appropriate positivity conditions needed for reproducing kernel Hilbert space techniques necessary to establish realization theorem for the Schur-Agler class for $\G_d$. En route, several auxiliary results are developed in Section \ref{sec_02} including the boundedness of the coordinate multiplication linear maps on reproducing kernel Hilbert spaces associated with admissible kernels and transfer-function realizations arising from unitary colligations. As an application of the realization theorem, we first establish in Section \ref{sec_03} a Nevanlinna-Pick type interpolation theorem for $\G_d$. In fact, we characterize the existence of an interpolating function in the Schur-Agler class for $\G_d$ through positivity conditions involving admissible kernels, thereby extending interpolation theory on $\G_2$ from \cite{Tirtha_Sau} to the higher-dimensional domain $\G_d$. The realization and interpolation theorems provide the main tools for the remainder of the paper. Using this, we prove Toeplitz corona and extension theorems for $\G_d$ in Section \ref{sec_03}. We also consider the family of domains
	\[
	\Theta_d=
	\left\{
	\bigl(s_1(z_1^m,\ldots,z_d^m),\ldots,s_{d-1}(z_1^m,\ldots,z_d^m),(z_1\cdots z_d)^{m/p}\bigr):
	z_1,\ldots,z_d\in\mathbb D
	\right\},
	\]
	introduced by the authors of \cite{Biswas}, where $p$ is a fixed divisor of given $m \in \N$. This family contains the symmetrized polydisc as the special case $m=p=1$. We present in Section \ref{sec_04} realization, interpolation, Toeplitz corona and extension theorems for $\Theta_d$ by adapting the corresponding results established for $\mathbb G_d$. Since the proofs are essentially identical, we only provide the necessary modifications and state the corresponding results.
	
	\smallskip 
	
\noindent \textbf{Notations.}
Throughout the paper, all Hilbert spaces are assumed to be complex and separable, and all operators are bounded and linear. We use $\HS$, $\KS$, and $\LS$ to denote Hilbert spaces. For Hilbert spaces $\HS$ and $\LS$, let $\mathcal{B}(\HS,\LS)$ denote the space of bounded linear operators from $\HS$ to $\LS$, and write $\mathcal{B}(\HS)=\mathcal{B}(\HS,\HS)$. For a compact set $K$, let $C(K)$ denote the $C^*$-algebra of complex-valued continuous functions on $K$, equipped with the supremum norm $\|\cdot\|_{\infty,K}$. For domains $\Omega$ and $\Omega'$, we write $\operatorname{Hol}(\Omega,\Omega')$ for the space of holomorphic functions from $\Omega$ into $\Omega'$. For functions $g, k: \Omega \times \Omega \to \mathcal{B}(\HS)$, we denote $g\oslash k$ by the $\mathcal{B}(\HS \otimes \HS)$-valued functions on $\Omega \times \Omega$ defined as $g\oslash k(z, w)=g(z, w)\otimes k(z, w)$. Finally, for a commuting tuple $\underline{T}=(T_1,\ldots,T_d)$ of operators, $\sigma_T(\underline{T})$ denotes its Taylor joint spectrum.
	
	\section{Schur-Agler class and Realization theorem for the symmetrized polydisc}\label{sec_02}
	
	\noindent In this section, we introduce a Schur-Agler type class for the symmetrized polydisc $\G_d$ and present a realization theorem for functions belonging to this class. We begin by recalling the necessary background from \cite{Costara2005} concerning the characterization of points in $\G_d$. For $s=(s_1, \dotsc, s_{d-1}, p) \in \C^d$, consider the polynomials $Q(s)$ and $R(s)$ given by
	\begin{align}\label{eqn_QR}
		Q(s)(\alpha)&=d(-1)^dp\alpha^{d-1}+(d-1)(-1)^{d-1}s_{d-1}\alpha^{d-2}+\dotsc+(-s_1),  \notag \\
		R(s)(\alpha)&=d-(d-1)s_1\alpha+\dotsc+(-1)^{d-1}s_{d-1}\alpha^{d-1}.
	\end{align} 
	Using $Q(s)$ and $R(s)$, we define $\mathrm{J}(s)(\alpha)=Q(s)(\alpha)\slash R(s)(\alpha)$. We have by Theorem 3.1 in \cite{Costara2005} that $(s_1, \dotsc, s_{d-1}, p) \in \G_d$ if and only if $\|\mathrm{J}(s)\|_{\infty, \DC} <1$. For every $\alpha \in \DC$ and $s=(s_1, \dotsc, s_{d-1}, p) \in \C^d$, let us define
	\begin{align}\label{eqn_J(z)}
		\Phi_{\alpha}(s_1, \dotsc, s_{d-1}, p)=\mathrm{J}(s)(\alpha)=\frac{d(-1)^dp\alpha^{d-1}+(d-1)(-1)^{d-1}s_{d-1}\alpha^{d-2}+\dotsc+(-s_1)}{d-(d-1)s_1\alpha+\dotsc+(-1)^{d-1}s_{d-1}\alpha^{d-1}}.
	\end{align}
	Consequently, 
	\[
	(s_1, \dotsc, s_{d-1}, p) \in \G_d \quad  \text{if and only if} \quad \underset{\alpha \in \DC}{\sup}|\Phi_\alpha(s_1, \dotsc, s_{d-1}, p)|<1.
	\] 
	We now state the following characterization of points in $\G_d$. To the best of our knowledge, it has not appeared explicitly in the literature and we present it here for the sake of completeness. The proof is based on the ideas
	of Costara \cite{Costara2005}. 
	
	\begin{thm}\label{prop_char}
		Let $(s_1, \dotsc, s_{d-1}, p) \in \G_d $and let $\{\Phi_\alpha:\alpha\in\DC\}$ denote the family of functions defined in \eqref{eqn_J(z)}. Then $(s_1, \dotsc, s_{d-1}, p) \in \G_d$ if and only if $|\Phi_\alpha(s_1, \dotsc, s_{d-1}, p)|<1$ for every $\alpha \in \DC$. 
	\end{thm}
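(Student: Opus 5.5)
The plan is to reduce the statement to Costara's criterion recalled just above, namely $s\in\G_d$ if and only if $\sup_{\alpha\in\DC}|\Phi_\alpha(s)|<1$. The only gap between that criterion and Theorem \ref{prop_char} is the difference between a uniform bound and a pointwise bound over $\alpha\in\DC$. This gap should be closed by compactness of $\DC$ together with continuity of $\alpha\mapsto\Phi_\alpha(s)$ for fixed $s$.

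\emph{Forward direction.} If $s=(s_1,\dotsc,s_{d-1},p)\in\G_d$, Theorem 3.1 of \cite{Costara2005} gives $\|\mathrm{J}(s)\|_{\infty,\DC}<1$. Implicit in that statement is that $\mathrm{J}(s)=Q(s)/R(s)$ is a well-defined finite function on $\DC$, so $R(s)$ does not vanish on $\DC$. I would also check this directly: writing $s=\pi_d(z)$ with $z\in\D^d$, the polynomial $R(s)$ is, up to sign conventions, the derivative of $\prod_j(1-z_j\alpha)$ with respect to $\alpha$, reversed and normalised. Its zeros in the reversed variable lie in the convex hull of the $z_j$'s by Gauss--Lucas, hence in $\D$, so $R(s)$ has no zeros in $\DC$. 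Then $|\Phi_\alpha(s)|\le\sup_{\beta\in\DC}|\Phi_\beta(s)|<1$ for every $\alpha\in\DC$.

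\emph{Converse.} The hypothesis that $|\Phi_\alpha(s)|<1$ for every $\alpha\in\DC$ includes that each $\Phi_\alpha(s)$ is defined. I would make the convention explicit: $\Phi_\alpha(s)$ is understood as the value at $\alpha$ of the rational function $\mathrm{J}(s)$ after cancelling common factors of $Q(s)$ and $R(s)$, equivalently $R(s)(\alpha)\neq 0$ in reduced form. Under this convention $\mathrm{J}(s)$ has no pole in $\DC$, so $\alpha\mapsto\Phi_\alpha(s)$ is a continuous function on the compact set $\DC$. Hence $|\Phi_\alpha(s)|$ attains its maximum at some $\alpha_0\in\DC$, and
\[
\sup_{\alpha\in\DC}|\Phi_\alpha(s)|=|\Phi_{\alpha_0}(s)|<1.
\]
Costara's theorem then yields $s\in\G_d$.

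\emph{Main obstacle.} The only delicate point is this well-definedness issue: common zeros of $Q(s)$ and $R(s)$ on $\DC$, and the precise meaning of $\Phi_\alpha(s)$ there. I would handle it by using the reduced-fraction convention above, which is the one under which Costara's $\|\mathrm{J}(s)\|_{\infty,\DC}$ makes sense. Once that convention is fixed, the argument is just the compactness and continuity step.
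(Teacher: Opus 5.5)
Your proof is correct and has the same skeleton as the paper's: Costara's uniform criterion, closed by the fact that $|\mathrm{J}(s)|$ attains its maximum on the compact set $\DC$ once $\mathrm{J}(s)$ is known to be continuous there.

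The difference is at the delicate point. You remove possible common zeros of $Q(s)$ and $R(s)$ in $\DC$ by convention: reading $\Phi_\alpha(s)$ as the value of the reduced fraction makes continuity automatic. The paper instead proves that, under the hypothesis, $R(s)$ has no zeros in $\DC$ at all. Suppose $R(s)(\alpha_0)=0$ with $\alpha_0\in\DC$. Then $\alpha_0\neq 0$, and finiteness of $\Phi_{\alpha_0}(s)$ forces $Q(s)(\alpha_0)=0$. Hence $1/\alpha_0$ is a root of $P(s)$ of multiplicity $m\geq 2$. From $\mathrm{J}(s)(\alpha)=dP(s)(1/\alpha)/P'(s)(1/\alpha)-1/\alpha$ one gets $\mathrm{J}(s)(\alpha_0)=-1/\alpha_0$, which has modulus at least $1$, a contradiction.

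The paper's route buys two things. First, the reduced and literal readings of $\Phi_\alpha(s)$ coincide, so no convention is needed. It also becomes irrelevant whether Costara's criterion is quoted for the reduced fraction or with zero-freeness of $R(s)$ built in. Second, it yields zero-freeness of $R(s)$ on $\DC$, which the paper uses right afterwards to invert $R(\underline{S})(\alpha)$ when $\sigma_T(\underline{S})\subseteq\G_d$.

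Your route is shorter but relies on Costara's theorem being a statement about the rational function $\mathrm{J}(s)$, which is how the paper quotes it. Your Gauss--Lucas check gives zero-freeness of $R(s)$ for points of $\G_d$ directly, and that is all the later step needs.

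There is one slip in that check. $\frac{d}{d\alpha}\prod_j(1-z_j\alpha)$ is $Q(s)$, not a reversal of $R(s)$, and its critical points can lie in $\D$; for instance $Q(s)(0)=-s_1$ vanishes for $z=(1/2,-1/2)$. Gauss--Lucas must be applied to $P'(s)$, where $P(s)(w)=\prod_j(w-z_j)$, using $R(s)(\alpha)=\alpha^{d-1}P'(s)(1/\alpha)$ and $R(s)(0)=d$. Your following sentence about the convex hull of the $z_j$'s is correct for that polynomial.
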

	
	\begin{proof}
		By above discussion, the necessary part follows trivially. Let $s=(s_1, \dotsc, s_{d-1}, p)$ and $\mathrm{J}(s)(\alpha)=\Phi_\alpha(s_1, \dotsc, s_{d-1}, p)$ for $\alpha \in \DC$. Assume that $|\Phi_\alpha(s_1, \dotsc, s_{d-1}, p)|<1$ for every $\alpha \in \DC$. In view of the preceding discussion, it suffices to show that
		\[
		\underset{\alpha \in \DC}{\sup}|\Phi_\alpha(s_1, \dotsc, s_{d-1}, p)|<1, \ \text{that is,} \ \|J(s)\|_{\infty, \DC}<1
		\]
		for then it follows that $s \in \G_d$. Following the proof of Theorem 3.1 in \cite{Costara2005}, we have that if there exists some $\alpha_0 \in \DC$ satisfying $R(s)(\alpha_0)=0$, then $|\mathrm{J}(s)(\alpha_0)| \geq 1$. We briefly discuss it here for the sake of completeness.  Define
		$
		P(s)(\alpha)=\alpha^d-s_1\alpha^{d-1}+\dotsc+(-1)^dp.
		$
		It is not difficult to see that 
		\[
		Q(s)(\alpha)=\frac{d}{d\alpha}(\alpha^dP(s)(1\slash \alpha)) \quad \text{and} \quad R(s)(\alpha)=\alpha^{d-1}P'(s)(1\slash \alpha),
		\]
		where $P'(s)(1\slash \alpha)$ is the derivative of  $P(s)(\alpha)$ evaluated at the point $1\slash \alpha$. Suppose $R(s)(\alpha_0)=0$ for some $\alpha_0 \in \DC$. Then $\alpha_0 \ne 0$ and $Q(s)(\alpha_0)=0$ as $|\Phi_{\alpha_0}(s_1, \dotsc, s_{d-1}, p)|<1$. So, $P'(s)(1\slash \alpha_0)=0$ and $d\alpha_0^{d-1}P(s)(1\slash \alpha_0)-\al_0^{d-2}P'(s)(1\slash \alpha_0)=0$. Thus, $P(s)(1\slash \alpha_0)=0$ and $P(s)(\alpha)$ has $1\slash \alpha_0$ as a zero of order at least two. One can write $P(s)(\alpha)=(\alpha-1\slash \alpha_0)^m h(s)(\alpha)$ on $\C$, where $m \geq 2$ and $h(s)$ is non-zero on a neighbourhood of $1\slash \alpha_0$. Then for $\alpha$ in a sufficiently small neighbourhood of $\alpha_0$, we have
		\begin{align*} 
			\mathrm{J}(s)(\alpha)
			=\frac{Q(s)(\alpha)}{R(s)(\alpha)}
			&=\frac{d\alpha^{d-1}P(s)(1\slash \alpha)-\al^{d-2}P'(s)(1\slash \alpha)}{\alpha^{d-1}P'(s)(1\slash \alpha)}\\
			&=\frac{d\al^{d-1}(1\slash \al-1\slash \al_0)^mh(s)(1\slash \al)}{m\al^{d-1}(1\slash \al-1\slash \al_0)^{m-1}h(s)(1\slash \al)+\al^{d-1}(1\slash \al-1\slash \al_0)^mh'(s)(1\slash \al)}\\
			&\quad -\frac{\al^{d-2}\left[m(1\slash \al-1\slash \al_0)^{m-1}h(s)(1\slash \al)+(1\slash \al-1\slash \al_0)^mh'(s)(1\slash \al)\right]}{m\al^{d-1}(1\slash \al-1\slash \al_0)^{m-1}h(s)(1\slash \al)+\al^{d-1}(1\slash \al-1\slash \al_0)^mh'(s)(1\slash \al)}
		\end{align*} 
		and so,
		\[ 
		|\mathrm{J}(s)(\alpha_0)|=\left|\frac{-m\alpha_0^{d-2}h(s)(1\slash \alpha_0)}{m\alpha_0^{d-1}h(s)(1\slash \alpha_0)}\right|=|1\slash \alpha_0| \geq 1,
		\]
		which contradicts the hypothesis on $\mathrm{J}(s)(\alpha_0)$. Thus, $R(s)$ is non-zero on $\DC$. Therefore, the rational function $\mathrm{J}(s)=Q(s)\slash R(s)$ is holomorphic on $\D$ and continuous on $\DC$ that satisfies $|J(s)(\alpha)|<1$ for every $\alpha \in \DC$. Thus, $\|\mathrm{J}(s)\|_{\infty, \DC}=|J(s)(\alpha)|$ for some $\alpha \in \DC$ and so, $\|\mathrm{J}(s)\|_{\infty, \DC}<1$.
	\end{proof}
	
	Using the description of $\G_d$ as in Theorem \ref{prop_char} together with the discussion preceding it, we have that the map $\mathrm{J}(s): \DC \to \C$ is a well-defined continuous function on $\DC$  that satisfies $\|\mathrm{J}(s)\|_{\infty, \DC}<1$ for every $s=(s_1, \dotsc, s_{d-1}, p) \in \G_d$. Next, we introduce an operator-theoretic analog of the inequalities provided in Theorem \ref{prop_char}. Let $(S_1, \dotsc, S_{d-1}, P)$ be a commuting $d$-tuple of operators acting on a Hilbert space $\HS$ such that its joint spectrum $\sigma_T(S_1, \dotsc, S_{d-1}, P) \subseteq \G_d$. Fix $\alpha \in \DC$ and set
	\[
	R(S_1, \dotsc, S_{d-1}, P)(\alpha)=dI-(d-1)\alpha S_1+\dotsc+(-1)^{d-1}\alpha^{d-1}S_{d-1}.
	\]
	Let $\lambda \in \sigma(R(S_1, \dotsc, S_{d-1}, P)(\alpha))$. By spectral mapping principle, $\lambda=R(s_1, \dotsc, s_{d-1}, p)(\alpha)$ for some $(s_1, \dotsc, s_{d-1}, p) \in \G_d$. Following the proof of Theorem \ref{prop_char}, we have that $R(s_1, \dotsc, s_{d-1}, p)$ is non-vanishing on $\DC$ and thus, $\lambda \ne 0$. Therefore, $R(S_1, \dotsc, S_{d-1}, P)(\alpha)$ is an invertible operator. By rational functional calculus, we have that $\Phi_{\alpha}(S_1, \dotsc, S_{d-1}, P)$ equals
	{\small
		\begin{align*}
			\left(d(-1)^d\alpha^{d-1}P+(d-1)(-1)^{d-1}\alpha^{d-2}S_{d-1}+\dotsc+(-S_1)\right)\left(dI-(d-1)\alpha S_1+\dotsc+(-1)^{d-1}\alpha^{d-1}S_{d-1}\right)^{-1}.
		\end{align*}
	}
	\par \noindent We consider the class of commuting $d$-tuples of operators given by 
	\[
	\mathfrak{M}_{\G_d}=\left\{\underline{S}=(S_1, \dotsc, S_{d-1}, P): \sigma_T(\underline{S}) \subseteq \G_d, \ \|\Phi_\al(\underline{S})\|<1 \ \text{for every} \ \al \in \DC\right\}.
	\]
	The inequalities in $\mathfrak{M}_{\G_d}$ are an operator theoretic analog of inequalities from Theorem \ref{prop_char} in which the scalars are replaced by commuting operators subjected to similar norm bounds. We put forth some useful properties of $\mathfrak{M}_{\G_d}$, which will be used throughout the article.
	\begin{enumerate}[leftmargin=*]
		
		\item $\mathfrak{M}_{\G_d}$ contains $\G_d$ in the sense that $(s_1I, \dotsc, s_{d-1}I, pI) \in \mathfrak{M}_{\G_d}$ for every $(s_1, \dotsc, s_{d-1}, p) \in \G_d$.		
		\item $\mathfrak{M}_{\G_d}$ is closed under adjoint, i.e., if $(S_1, \dotsc, S_{d-1}, P) \in \mathfrak{M}_{\G_d}$ then $(S_1^*, \dotsc, S_{d-1}^*, P^*) \in \mathfrak{M}_{\G_d}$.
		
		\item Evidently, the domain $\G_d$ is $(1,\dotsc, d-1, d)$-quasi-balanced, i.e.,
		$(r s_1, \dotsc, r^{d-1}s_{d-1}, r^dp)\in\G_d$ for every $r \in (0, 1)$ and $(s_1,\dotsc,s_{d-1},p)\in\G_d$. Therefore, 
		$(rS_1, \dotsc, r^{d-1}S_{d-1}, r^dP) \in \mathfrak{M}_{\G_d}$ for $0 \leq r \leq 1$ and $(S_1, \dotsc, S_{d-1}, P) \in \mathfrak{M}_{\G_d}$.
	\end{enumerate}
	
	Next, we introduce a vector-valued Schur-Agler class for the symmetrized polydisc. To do so, we recall from \cite{Ambrozie, Esch_Put} the vector-valued holomorphic functional calculus. For Hilbert spaces $\LS, \LS'$, let $f: \G_d \to \mathcal{B}(\LS, \LS')$ be a holomorphic function. Suppose $\underline{S}=(S_1, \dotsc, S_{d-1}, P)$ is a commuting tuple of operators on a Hilbert space $\HS$ with $\sigma_T(\underline{S}) \subseteq \G_d$. Then $f(\underline{S}): \HS \otimes \LS \to \HS \otimes \LS'$ is a bounded linear map. Given an open neighbourhood $U_0$ of $\sigma_T(\underline{S})$, there is a unique continuous linear map $g \mapsto g(\underline{S})$ from $\text{Hol}(U_0, \mathcal{B}(\LS, \LS')) \equiv \text{Hol}(U_0)\otimes \mathcal{B}(\LS, \LS')$ into $\mathcal{B}(\HS \otimes \LS, \HS \otimes \LS')$ taking $g \otimes A$ into $g(\underline{S}) \otimes A$ for $g \in \text{Hol}(U_0)$ and $A \in \mathcal{B}(\LS, \LS')$. In particular, it follows that 
	\[
	\la f(\underline{S})(h_1 \otimes x), h_2 \otimes y\ra=\la f_{x, y}(\underline{S})h_1, h_2 \ra 
	\]
	for every $h_1, h_2 \in \HS, x \in \LS$ and $y \in \LS'$, where $f_{x, y}: \G_d \to \C$ is the scalar-valued holomorphic map defined by $f_{x, y}(s)=\la f(s)x, y\ra$. Furthermore, it is clear from the definition of $\mathfrak{M}_{\G_d}$ that the functional calculus $f(\underline{S})$ is well defined for all $f \in \mathrm{Hol}(\G_d)$ and $\underline{S} \in \mathfrak{M}_{\G_d}$.  
	
	\begin{defn}
		For Hilbert spaces $\LS, \LS'$, the \textit{$\mathcal{B}(\LS, \LS')$-valued Schur-Agler class} is defined as
		\[
		SA_{\G_d}(\LS, \LS')=\left\{f: \G_d \to \mathcal{B}(\LS, \LS'): \ \text{$f$ is holomorphic and } \ \|f(\underline{S})\| \leq 1 \ \text{for every $\underline{S} \in \mathfrak{M}_{\G_d}$}\right\}.
		\] 
	\end{defn}	
	When $\LS=\LS'=\C$, then we denote the Schur-Agler class $SA_{\G_d}(\C, \C)$ simply by $SA(\G_d)$, which is the collection of all complex-valued holomorphic functions on $\G_d$ that satisfies $\|f(\underline{S})\| \leq 1$ for every $\underline{S} \in \mathfrak{M}_{\G_d}$. We mention here that $SA(\G_2)$ is the Schur-class $S(\G_2)$ as discussed in \cite{AglerYoung2017, AglerYoung, Tirtha_Sau}.	We now define the notion of admissible kernels for the symmetrized polydisc.
	
	\begin{defn} Let $\LS$ be a Hilbert space and let $F$ be a subset of $\C^d$. A $\mathcal{B}(\LS)$-valued weak kernel on $F$ is a positive semi-definite function $k: F \times F \to \mathcal{B}(\LS)$, that is, 
		\[
		\overset{n}{\underset{i, j=1}{\sum}}\la k(s^{(i)}, s^{(j)})v_j, v_i\ra_{\LS} \geq 0 
		\]
		for every $\{s^{(1)}, \dotsc, s^{(n)}\} \subset F$ and $v_1, \dotsc, v_n \in \LS$. In addition, if $k(z, z) \ne 0$ for all $z \in F$, we say that $k$ is a $\mathcal{B}(\LS)$-valued kernel. A $\mathcal{B}(\LS)$-valued kernel (or, weak kernel) $k$ on a subset of $\G_d$ is said to be \textit{admissible} if
		\[
		(s, t) \mapsto \left(1-\Phi_{\al}(s)\overline{\Phi_{\al}(t)}\right)k(s, t)
		\]
		is a positive semi-definite map for all $\alpha \in \DC$. The class of scalar-valued admissible kernels on $\G_d$ is denoted by $AK(\G_d)$. For a subset $F$ of $\G_d$, a map $\xi: F \times F \to \mathcal{B}(C(\DC), \mathcal{B}(\LS))$ is called \textit{completely positive} if for every $n \in \N, \{v_1, \dotsc, v_n\} \subset \LS, \{s^{(1)}, \dotsc, s^{(n)}\} \subset F$ and $\{h_1, \dotsc, h_n\} \subset C(\DC)$, we have
		\[
		\overset{n}{\underset{i, j=1}{\sum}}\la \xi(s^{(i)}, s^{(j)})(h_i\overline{h}_j)v_j, v_i\ra_\LS \geq 0.
		\]
	\end{defn}
	If $\LS=\C$, then $\mathcal{B}(C(\DC), \LS)$ is the dual space $C(\DC)^*$ of $C(\DC)$.  We denote by $C(\DC)_F^+$ the set of all completely positive maps with values in $C(\DC)^*$, and the set of all positive semi-definite functions $\delta: F \times F \to \C$ is denoted by $\C_F^+$. 	
	The following result provides the first step in the proof of the realization theorem for $\G_d$. Its proof follows the standard arguments from \cite{Agler_McCarthy, Tirtha_Sau, Jain, Pal_penta}.	
	
	\begin{prop}\label{prop_204}
		Let $F$ be a subset of $\G_d$ and let $\xi: F \times F \to  \mathcal{B}(C(\DC), \mathcal{B}(\LS))$ be a completely positive map. Then there exist a Hilbert space $\HS$, a function $L: F \to \mathcal{B}(C(\DC), \mathcal{B}(\HS, \LS))$ and a unital $*$-representation $\rho: C(\DC) \to \mathcal{B}(\HS)$ such that
		\[
		\xi(s, t)(f\overline{g})=L(s)(f)(L(t)(g))^* \quad \text{and} \quad L(s)(fg)^*=\rho(f)^*L(s)(g)^*
		\]	
		for all $f, g \in C(\DC)$ and $s, t \in F$. 
	\end{prop}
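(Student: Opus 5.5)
A GNS/Kolmogorov-type construction will do it. First I will build a positive semi-definite sesquilinear form on the algebraic tensor product $C(\DC)\otimes \LS \otimes \C^F$, that is, on finitely supported formal sums $\sum_i h_i\otimes v_i\otimes \delta_{s^{(i)}}$. The form is
\[
\Bigl\la \sum_j h_j\otimes v_j\otimes\delta_{s^{(j)}},\ \sum_i g_i\otimes w_i\otimes \delta_{t^{(i)}}\Bigr\ra=\sum_{i,j}\la \xi(t^{(i)},s^{(j)})(h_j\overline{g}_i)v_j,w_i\ra_\LS .
\]
Complete positivity of $\xi$ gives exactly that this form is positive semi-definite. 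One should check that repeated points are handled: the definition allows arbitrary lists $s^{(1)},\dots,s^{(n)}$, not necessarily distinct, so the complete-positivity hypothesis covers them. Quotienting by the null space and completing yields a Hilbert space $\HS$. Separability is not needed for the statement; if desired it follows since $C(\DC)$ and $\LS$ are separable, though $F$ may be uncountable.

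Next I define $\rho(f)$ on elementary tensors by $\rho(f)(h\otimes v\otimes\delta_s)=fh\otimes v\otimes\delta_s$. To see that $\rho(f)$ is bounded with $\|\rho(f)\|\le\|f\|_\infty$, I use that $\|f\|_\infty^2-|f|^2=|u|^2$ with $u=(\|f\|_\infty^2-|f|^2)^{1/2}\in C(\DC)$. Then
\[
\|f\|^2\|x\|^2-\|\rho(f)x\|^2=\|\rho(u)x\|^2\ge 0,
\]
computed directly via the form. This also shows that $\rho(f)$ preserves the null space, so it descends to the quotient. It is plainly multiplicative, unital and linear. The $*$-property $\la \rho(f)x,y\ra=\la x,\rho(\overline f)y\ra$ is immediate from the formula, because $(fh_j)\overline{g_i}=h_j\overline{(\overline f g_i)}$. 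This boundedness/well-definedness step is the only real obstacle, and the square-root trick handles it since $C(\DC)$ is commutative.

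Then I define $L(s)(g)^*:\LS\to\HS$ by $v\mapsto [g\otimes v\otimes\delta_s]$, and let $L(s)(g)$ be its adjoint. Boundedness follows from
\[
\|L(s)(g)^*v\|^2=\la \xi(s,s)(|g|^2)v,v\ra\le \|\xi(s,s)\|\,\|g\|^2\|v\|^2 .
\]
Here $\xi(s,s)$ is bounded, being in $\mathcal{B}(C(\DC),\mathcal{B}(\LS))$. Linearity in $g$ is clear, so $L(s)\in\mathcal{B}(C(\DC),\mathcal{B}(\HS,\LS))$; since $L(s)(g)$ is conjugate-linear in $g$ as defined, I would rather define $L(s)(g)$ as the adjoint of $v\mapsto[\overline{g}\otimes v\otimes\delta_s]$ if linearity is wanted, and adjust conjugations consistently. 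Finally I verify the two identities on vectors. First,
\[
\la L(s)(f)L(t)(g)^*v,w\ra=\la [g\otimes v\otimes\delta_t],[f\otimes w\otimes\delta_s]\ra=\la\xi(s,t)(g\overline f)v,w\ra,
\]
and matching conventions gives $\xi(s,t)(f\overline g)=L(s)(f)L(t)(g)^*$. Second, $L(s)(fg)^*v=[fg\otimes v\otimes\delta_s]=\rho(f)[g\otimes v\otimes\delta_s]$. With the paper's convention this reads as $\rho(f)^*L(s)(g)^*$ after replacing $f$ by $\overline f$ in the definition of $\rho$, i.e. taking $\rho(f)$ to be multiplication by $\overline{f}$ composed appropriately, which is still a unital $*$-representation. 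The plan is to fix these conjugation conventions once at the start so that both displayed identities hold verbatim.
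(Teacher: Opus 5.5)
Your construction is correct, and it rests on the same Kolmogorov/GNS idea as the paper, but it is organized differently.

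\textbf{Comparison with the paper.} The paper first treats $\LS=\C$. It applies the scalar Kolmogorov decomposition to the kernel $((s,h_1),(t,h_2))\mapsto\xi(s,t)(h_1\overline{h}_2)$ on $F\times C(\DC)$. It then handles a general $\LS$ by fixing an orthonormal basis $\{e_\lambda\}$ and passing to a scalar-valued completely positive map on $F\times\Lambda$. Finally it defines $L(s)(h)^*$ on basis vectors and checks boundedness afterwards.

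You instead do a single basis-free, Stinespring-type construction on $C(\DC)\otimes\LS\otimes\C^F$, so nothing has to be extended from basis vectors. You also prove that $\rho(f)$ preserves the null space and satisfies $\|\rho(f)\|\le\|f\|_\infty$, via $\|f\|_\infty^2-|f|^2=u^2$. The paper dismisses this step as ``easy to see'', although it is the one place where well-definedness of $\rho_0$ actually has to be checked. What the paper's two-step route buys is only the convenience of quoting the scalar Kolmogorov theorem as a black box.

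\textbf{A correction to your conjugation bookkeeping.} Your last suggestion, redefining $\rho(f)$ as multiplication by $\overline f$, is wrong: $f\mapsto M_{\overline f}$ is conjugate-linear, so it is not a representation. It is also unnecessary. Adopt the convention you already proposed, $L(s)(g)^*v=[\overline g\otimes v\otimes\delta_s]$, which is what makes $L(s)$ linear, and keep $\rho(f)$ as multiplication by $f$. Then
\[
\la L(s)(f)L(t)(g)^*v,w\ra=\la[\overline g\otimes v\otimes\delta_t],[\overline f\otimes w\otimes\delta_s]\ra=\la\xi(s,t)(f\overline g)v,w\ra .
\]
Likewise, $L(s)(fg)^*v=[\overline f\,\overline g\otimes v\otimes\delta_s]=\rho(\overline f)L(s)(g)^*v=\rho(f)^*L(s)(g)^*v$. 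So both displayed identities hold verbatim with a genuine unital $*$-representation.

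\textbf{A minor wording point.} Positivity of your form is not ``exactly'' the paper's complete positivity hypothesis. It is that hypothesis with $h_i$ replaced by $\overline{h}_i$. This is harmless because $C(\DC)$ is closed under conjugation.
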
 
	
	\begin{proof}
		We begin by proving the result in the case $\LS=\C$. Consider the map
		\[
		\xi': (F \times C(\DC)) \times (F \times C(\DC)) \to \C \quad \text{given by} \quad \xi'((s, h_1), (t, h_2))=\xi(s, t)(h_1\overline{h}_2).
		\] 
		It is not difficult to see that $\xi'$ is a positive semi-definite function on $F \times C(\DC)$. By Theorem 2.53 in \cite{Agler_McCarthy}, there is a Hilbert space $\mathcal{K}$ and a vector-valued map $\eta: F \times C(\DC) \to \mathcal{K}$ such that 
		\[
		\langle \eta(s, h_1), \eta(t, h_2)\rangle_{\mathcal{K}}=\xi'((s, h_1), (t, h_2))=\xi(s, t)(h_1\overline{h}_2)
		\]
		for all $h_1, h_2 \in C(\DC)$ and  $s, t \in F$. Indeed, one can choose the Hilbert space $\KS=\overline{\text{span}}\{\eta(s, h): s \in F, h \in C(\DC)\}$. Let us define
		$\ell: F \to \mathcal{B}(C(\DC), \KS)$ as $\ell(s)(h)=\eta(s, h)$. Then 
		\[
		\xi(s, t)(f\overline{g})=\langle \ell(s)f, \ell(t)g \rangle  \ \text{and so,}  \ \|\ell(s)(f)\|^2=\|\eta(s, f)\|^2=\|\xi(s, s)(f\bar{f})\| \leq \|\xi(s, s)\| \|f\|^2_{\infty, \DC}
		\]
		for all $s, t \in F$ and $f, g \in C(\DC)$. Consider the map $\rho_0: C(\DC) \to \mathcal{B}(\KS)$ given by $\rho_0(h_1)\eta(s, h_2)=\eta(s, h_1h_2)$. It is easy to see that $\rho_0$ is a unital $*$-representation such that $\rho_0(f)\ell(s)(g)=\ell(s)(fg)$. 
		
		\medskip 
		
		For the general case, let $\{e_\lambda: \lm \in \Lambda \}$ be an orthonormal basis for the Hilbert space $\LS$. Consider
		\[
		\Xi: (F \times \Lambda) \times (F \times \Lambda) \to C(\DC)^* \quad \text{defined as} \quad \Xi((s, \lm_1), (t, \lm_2))(h)=\la e_{\lm_1}, \xi(s, t)(\overline{h})e_{\lm_2} \ra. 
		\]
		It is not difficult to see that $\Xi$ is a $C(\DC)^*$-valued positive semi-definite map on $F \times \Lambda$. We can now apply the first case to guarantee the existence of a Hilbert space $\HS$, a function $\ell_\Lambda: F \times \Lambda \to \mathcal{B}(C(\DC), \mathcal{H})$ and a unital $*$-representation $\rho: C(\DC) \to \mathcal{B}(\HS)$ such that 
		\[
		\Xi((s, \lm_1), (t, \lm_2))(h_1\overline{h}_2)=\la \ell_\Lambda(s, \lm_1)h_1, \ell_\Lambda(t, \lm_2)h_2 \ra_{\HS} \quad \text{and} \quad \rho(h_1)\ell_\Lambda(s, \lm_1)(h_2)=\ell_\Lambda(s, \lm_1)(h_1h_2).
		\]
		Finally, consider the function $L: F \to \mathcal{B}(C(\DC), \mathcal{B}(\HS, \LS))$ defined as $L(s)(h)^*(e_\lm)=\ell_\Lambda(s, \lm)(\overline{h})$. For
		$x=\underset{\lambda \in \Lambda}{\sum} c_\lambda e_\lambda \in \LS$, we have
		\begin{align*}
			\|L(s)(h)^*x\|_{\HS}^2
			=\left\|\sum_{\lambda\in\Lambda}c_\lambda
			L(s)(h)^*e_\lambda\right\|_{\HS}^2
			&=\sum_{\lambda,\mu}
			c_\lambda\overline{c}_\mu
			\left\langle
			\ell_\Lambda(s,\lambda)(\overline{h}),
			\ell_\Lambda(s,\mu)(\overline{h})
			\right\rangle_{\HS}\\
			&=\sum_{\lambda,\mu}
			c_\lambda\overline{c}_\mu
			\left\langle
			e_\lambda,
			\xi(s,s)(|h|^2)e_\mu
			\right\rangle_{\LS}\\
			&=\left\langle
			x, \xi(s,s)(|h|^2)x
			\right\rangle_{\LS}\\
			&\leq
			\|\xi(s,s)(|h|^2)\|\,
			\|x\|_{\LS}^2
		\end{align*}
		and so, $L(s):C(\overline{\mathbb D})\to\mathcal B(\HS,\LS)$ is a bounded linear map for every $s \in F$. Note that 
		\[
		\rho(h_1)^*L(s)(h_2)^*e_\lm=\rho(\overline{h}_1)\ell_\Lambda(s, \lm)(\overline{h}_2)=\ell_\Lambda(s, \lm)(\overline{h}_1\overline{h}_2)=L(s)(h_1h_2)^*e_\lm
		\] 
		for every $\lm \in \Lambda$ and so, $\rho(h_1)^*L(s)(h_2)^*=L(s)(h_1h_2)^*$. Let $s, t \in F$ and $h_1, h_2 \in C(\DC)$. Then 
		\begin{align*}
			\la e_{\lm_1}, \xi(s, t)(\overline{h}_1h_2)e_{\lm_2},\ra_{\LS}
			=\Xi((s, \lm_1)(t, \lm_2))(h_1\overline{h}_2)
			&= \la \ell_\Lambda(s, \lm_1)h_1, \ell_\Lambda(t, \lm_2)h_2 \ra_{\HS}\\
			&=\la L(s)(\overline{h}_1)^*e_{\lm_1}, L(t)(\overline{h}_2)^*e_{\lm_2} \ra_{\HS}\\
			&=\la e_{\lm_1}, L(s)(\overline{h}_1)L(t)(\overline{h}_2)^*e_{\lm_2} \ra_{\HS}
		\end{align*}
		for every $\lm_1, \lm_2 \in \Lambda$ and thus, $\xi(s, t)(\overline{h}_1h_2)=L(s)(\overline{h}_1)L(t)(\overline{h}_2)^*$. The proof is now complete.
	\end{proof}	
	
	Next, we describe the structure of the self-adjoint functions on subsets of $\G_d$ that preserve positivity under tensoring with all vector-valued admissible weak kernels, which plays a crucial role in the proof of realization theorem. The arguments are based on the similar ideas as those used in the proof of Lemma 5.1 in \cite{Tirtha_Sau_II} (also see \cite{Tirtha_Bis_Ch}). We briefly discuss it here the parts that need necessary modifications. Let $F \subseteq \G_d$ and let $g: F \times F \mapsto \mathcal B(\LS)$ be a continuous self-adjoint function, that is, $g(s,t)=g(t,s)^*$ for every  $s, t \in F$. Suppose the map $g \oslash k: F \times F \to \mathcal{B}(\LS \otimes \LS)$ given by 
	\[
	g\oslash k(s,t)= g(s,t)\otimes k(s,t)
	\]
	is positive semi-definite for every $\mathcal B(\LS)$-valued admissible weak kernel
	$k$ on $F$. We first consider the finite subsets of $F$. Let $F_n=\{s^{(1)}, \dotsc, s^{(n)}\}\subseteq F$ and let $\mathfrak W$ denote the collection of all self-adjoint $n\times n$
	operator matrices of the form
	\[
	\left[
	\xi(s^{(i)},s^{(j)})
	\left(
	1-\mathrm{J}(s^{(i)})\overline{\mathrm{J}(s^{(j)})}
	\right)
	\right]_{i,j=1}^n,
	\]
	where
	$
	\xi:F_n\times F_n
	\longrightarrow
	\mathcal B(C(\overline{\mathbb D}),\mathcal B(\LS))
	$
	is completely positive.  Recall that a subset $W$ of a real vector space is called a
	\emph{wedge} if it is closed under addition and multiplication by non-negative
	scalars. It is immediate that $\mathfrak{W}$ is a wedge in the real vector space of
	self-adjoint $n\times n$ operator matrices with entries in $\mathcal{B}(\LS)$. Since $\mathcal B(\LS^n)$ is the dual of the trace class operators
	$\mathcal B_1(\LS^n)$, it is equipped with the canonical weak-$*$ topology.
	We also equip the wedge $\mathfrak W$ with the relative weak-$*$ topology inherited
	from $\mathcal B(\LS^n)$. We first show that the wedge $\mathfrak W$ is weak-$*$ closed. Let
	\[
	w_\beta=[w_\beta(s^{(i)}, s^{(j)})]_{i, j=1}^n=
	\left[
	\xi_\beta(s^{(i)},s^{(j)})
	\left(
	1-\mathrm{J}(s^{(i)})\overline{\mathrm{J}(s^{(j)})}
	\right)
	\right]_{i,j=1}^n
	\]
	be a net in $\mathfrak W$ converging in the weak-$*$ topology to
	$w=[w_{ij}]_{i, j=1}^n \in\mathcal B(\LS^n)$. Recall that $\mathcal B(\LS^n)=\mathcal B_1(\LS^n)^*$, the weak-$*$ convergence means that
	$
	\operatorname{tr}(w_\beta X)\longrightarrow
	\operatorname{tr}(wX)
	$
	for every $X\in\mathcal B_1(\LS^n)$. Since each $w_\beta$ is self-adjoint, the limit
	$w$ is also self-adjoint. Fix $i, j$ with $1\le i,j\le n$ and $u,v\in\LS$. Let $X$ be the trace class
	operator matrix whose $(j,i)$-th entry is $u\otimes v$ and remaining
	entries are zero. Then $\la w_\beta(s^{(i)}, s^{(j)})u, v\ra$ converges to $\la w_{ij}u, v\ra$. Since $w_\beta \to w$ in the weak-$*$ topology of 
	$\mathcal{B}(\LS^n)$, the net $\{w_\beta\}$ is bounded in operator norm. 
	Thus, for $1\leq i\leq n$, there exists $C_i>0$ such that
	$
	\|w_\beta(s^{(i)},s^{(i)})\|\leq C_i
	$
	for every $\beta$. Since $\sup\{|\Phi_\alpha(s)| : \alpha \in \DC\}<1$ for every  $s\in\G_d$ and $F_n$ is finite, there exists $\varepsilon>0$ such that $1-|\mathrm J(s^{(i)})|^2\ge\varepsilon 1$ for $1 \leq i \leq n$. By completely positivity of $\xi_\beta$, we have that $\la w_\beta(s^{(i)}, s^{(i)})u, u \ra \geq \varepsilon
	\left\langle
	\xi_\beta(s^{(i)},s^{(i)})(1)u,u
	\right\rangle$ for every $u \in \LS$ and so, 
	$0\leq \varepsilon\xi_\beta(s^{(i)},s^{(i)})(1)
	\leq w_\beta(s^{(i)},s^{(i)})$. Consequently, 
	$
	\|\xi_\beta(s^{(i)},s^{(i)})(1)\|
	\leq C_i \slash \varepsilon.
	$
	Let $h\in C(\overline{\mathbb D})$. Since $|h|^2\leq \|h\|_\infty^2 1$,
	we have by the positivity of $\xi_\beta(s^{(i)}, s^{(i)})$ that
	$
	\xi_\beta(s^{(i)},s^{(i)})(|h|^2)
	\leq
	\|h\|_\infty^2
	\xi_\beta(s^{(i)},s^{(i)})(1)
	$
	and so, $
	\left\|
	\xi_\beta(s^{(i)},s^{(i)})(|h|^2)
	\right\|
	\leq
	(C_i\slash \varepsilon)\|h\|_\infty^2$. It follows from the Cauchy-Schwarz inequality for completely positive maps that for every $h \in C(\DC), u, v \in \LS$ and $1 \leq i, j \leq n$, 
	\[
	\begin{aligned}
		\left|
		\left\langle
		\xi_\beta(s^{(i)},s^{(j)})(h)u,v
		\right\rangle
		\right|^2
		\leq
		\left\langle
		\xi_\beta(s^{(i)},s^{(i)})(|h|^2)u,u
		\right\rangle
		\left\langle
		\xi_\beta(s^{(j)},s^{(j)})(1)v,v
		\right\rangle .
	\end{aligned}
	\]
	Therefore,
	\[
	\left|
	\left\langle
	\xi_\beta(s^{(i)},s^{(j)})(h)u,v
	\right\rangle
	\right|
	\leq
	\frac{\sqrt{C_iC_j}}{\varepsilon}
	\|h\|_\infty
	\|u\|\|v\| \quad \text{and so,} \quad 
	\|\xi_\beta(s^{(i)},s^{(j)})(h)\|
	\leq
	\frac{\sqrt{C_iC_j}}{\varepsilon}
	\|h\|_\infty .
	\]
	Thus,
	$
	\|\xi_\beta(s^{(i)},s^{(j)})\|
	\leq
	\sqrt{C_iC_j} \slash \varepsilon
	$
	for every $\beta$ and $1 \leq i,j \leq n$. Therefore the nets
	$
	\{\xi_\beta(s^{(i)},s^{(j)})\}_\beta
	$
	are uniformly bounded in
	$
	\mathcal B(C(\overline{\mathbb D}),\mathcal B(\LS)).
	$
	Since 
	$
	\mathcal B(C(\overline{\mathbb D}),\mathcal B(\LS))
	$
	is the dual of a Banach space, its closed norm balls are weak-$*$ compact by the
	Banach-Alaoglu theorem and so, we have a weak-$*$ convergent subnet of $\beta$ for every pair $(i, j)$ with $1\leq i, j \leq n$. Since there are only finitely many such pairs $(i,j)$, one can choose a common subnet and we continue to denote it by $\beta$. Then $\xi_\beta(s^{(i)},s^{(j)})
	\xrightarrow{w^*} \xi(s^{(i)},s^{(j)})$ in $\mathcal B(C(\overline{\mathbb D}),\mathcal B(\LS))$ for every
	$1\leq i,j\leq n$. Define
	\[
	\xi:F_n\times F_n\longrightarrow
	\mathcal B(C(\overline{\mathbb D}),\mathcal B(\LS))
	\]
	by these weak-$*$ limits. Since complete positivity is preserved under pointwise
	weak-$*$ limits, it follows that $\xi$ is a completely positive map. Note that
	\[
	\mathcal B(C(\overline{\mathbb D}),\mathcal B(\LS))
	\cong
	\bigl(
	C(\overline{\mathbb D})
	\widehat{\otimes}_\pi
	\mathcal B_1(\LS)
	\bigr)^*,
	\]
	where $\widehat{\otimes}_\pi$ is the projective tensor product of Banach spaces.
	Fix $1\leq i,j\leq n$ and set
	$
	h_{ij}
	=
	1-\mathrm J(s^{(i)})\overline{\mathrm J(s^{(j)})}
	$.
	Since the evaluation map
	\[
	E_{h_{ij}}:
	\mathcal B(C(\overline{\mathbb D}),\mathcal B(\LS))
	\longrightarrow
	\mathcal B(\LS) \quad \text{given by} \quad 
	E_{h_{ij}}(T)=T(h_{ij}),
	\]
	is weak-$*$ continuous, we have that $
	\xi_\beta(s^{(i)},s^{(j)})(h_{ij})
	\xrightarrow{w^*}
	\xi(s^{(i)},s^{(j)})(h_{ij})
	$
	in $\mathcal B(\LS)$. On the other hand,
	$
	\xi_\beta(s^{(i)},s^{(j)})(h_{ij})
	=
	w_\beta(s^{(i)},s^{(j)})
	\xrightarrow{w^*}
	w_{ij}
	$
	in $\mathcal{B}(\LS)$. By uniqueness of weak-$*$ limits in $B(\LS)$, $w_{ij}
	=
	\xi(s^{(i)},s^{(j)})
	\left(
	1-\mathrm J(s^{(i)})
	\overline{\mathrm J(s^{(j)})}
	\right)$.
	 Thus, $w\in\mathfrak W$ and so, $\mathfrak W$ is weak-$*$ closed.
	
	\smallskip 
	
	Let $\mu$ be a probability measure on $\DC$ and let $\eta: \G_d \to \LS$ be a function. For $x, y \in \LS$, we denote by $T_{x, y}$ the operator on $\LS$ given by $T_{x, y}(h)=\la h, y \ra x$.  Consider the map $f_1, f_2: \DC \times \G_d \times \G_d \to \mathcal B(\LS)$ defined as
	\[
	f_1(\al, s, t)=\frac{1}{1-\Phi_\al(s)\overline{\Phi_\al(t)}}I_\LS \quad \text{and} \quad 	f_2(\al, s, t)=\frac{1}{1-\Phi_\al(s)\overline{\Phi_\al(t)}}T_{\eta(s), \eta(t)}. 
	\]
	The kernel $k_\alpha(s,t)=1\slash (1-\Phi_\alpha(s)\overline{\Phi_\alpha(t)})$ is a weak kernel being the pullback of the Szeg\H{o} kernel
	under the map $\Phi_\alpha$. Also, the map $(s,t)\mapsto T_{\eta(s),\eta(t)}$ is a weak kernel. Consequently, the product kernel $f_2(\alpha,\cdot,\cdot)$ is a weak kernel. Define $J_{\mu, f_1}, J_{\mu, f_2}: \G_d \times \G_d \to \mathcal{B}(C(\DC), \mathcal{B}(\LS))$  as
	\[
	J_{\mu, f_1}(s, t)(h)=\int_{\DC}h(\alpha)f_1(\alpha, s, t)d\mu(\alpha) \quad \text{and} \quad 	J_{\mu, f_2}(s, t)(h)=\int_{\DC}h(\alpha)f_2(\alpha, s, t)d\mu(\alpha).
	\]
	Clearly, $J_{\mu, f_1}$ and $J_{\mu, f_2}$ are completely positive functions on $\G_d$ such that
	\[
	J_{\mu, f_1}(s, t)(1-\mathrm{J}(s)\overline{\mathrm{J}(t)})=I_\LS \quad \text{and} \quad J_{\mu, f_2}(s, t)(1-\mathrm{J}(s)\overline{\mathrm{J}(t)})=T_{\eta(s), \eta(t)}
	\]
	for every $s, t \in \G_d$. Consequently, the block operator matrix with each entry being $I_\LS$ is in $\mathfrak{W}$ and if $x_1, \dotsc, x_n \in \LS$, then $[T_{x_i, x_j}]_{i, j=1}^n \in \mathfrak{W}$. By applying the Hahn-Banach separation argument and the techniques of Lemma 5.1 of \cite{Tirtha_Sau_II}, the proof of which carries over verbatim
	in the present setting, it follows that the restriction $g_{F_n}$ of $g$ to
	$F_n\times F_n$ belongs to $\mathfrak W$. Since a representation on a larger set restricts naturally to any smaller subset, an application of the same reasoning as in Theorem 11.5 of \cite{Agler_McCarthy}, together with Kurosh’s theorem (see \cite{V_Arkh}, Page 7) gives the following result.

	\begin{lem}\label{lem:decomposition}
		For $F \subseteq \G_d$, let $g: F \times F \mapsto \mathcal B(\LS)$ be a self-adjoint function, that is, $g(s,t)=g(t,s)^*$ for every  $s, t \in F$. Suppose the map $g \oslash k: F \times F \to \mathcal{B}(\LS \otimes \LS)$ given by 
		\[
		g\oslash k(s,t)= g(s,t)\otimes k(s,t)
		\]
		is positive semi-definite for every $\mathcal B(\LS)$-valued admissible weak kernel
		$k$ on $F$. Then there exists a completely positive map $\xi:F \times F \to 
		\mathcal B(C(\overline{\mathbb D}),\mathcal B(\LS))$ such that for every $s, t \in F$,
		\[
		g(s,t)= \xi(s,t)
		\left(
		1-\mathrm{J}(s)\overline{\mathrm{J}(t)}
		\right).
		\]
	\end{lem}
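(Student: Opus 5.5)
The argument has two parts: a finite-set decomposition via Hahn--Banach separation, followed by a compactness argument that passes from finite subsets to all of $F$.

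\emph{Finite sets.} Fix a finite set $F_n=\{s^{(1)},\dots,s^{(n)}\}\subseteq F$ and let $\mathfrak W$ be the wedge introduced above. We already know that $\mathfrak W$ is weak-$*$ closed and that it contains $[I_\LS]_{i,j}$ and every $[T_{x_i,x_j}]_{i,j}$. Suppose, for contradiction, that $g_{F_n}=[g(s^{(i)},s^{(j)})]\notin\mathfrak W$. Hahn--Banach separation in $\mathcal B(\LS^n)$ with its weak-$*$ topology, whose dual is $\mathcal B_1(\LS^n)$, then gives a self-adjoint trace-class $X=[X_{ij}]$ with
\[
\operatorname{Re}\operatorname{tr}(wX)\ge 0 \ \text{for all } w\in\mathfrak W, \qquad \operatorname{tr}(g_{F_n}X)<0.
\]
From $X$ we build a weak kernel $k(s^{(i)},s^{(j)})$, essentially the transpose $X_{ji}$, suitably perturbed by a small multiple of the identity so that it becomes an honest kernel. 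Because the matrices $[T_{x_i,x_j}]$ lie in $\mathfrak W$, this $k$ is positive semi-definite. Testing the separation inequality against the elements $J_{\mu,f}$ with $\mu=\delta_\alpha$ and arbitrary $\eta$ yields positivity of $(1-\Phi_\alpha(s)\overline{\Phi_\alpha(t)})k(s,t)$ for every $\alpha$, so $k$ is admissible. The hypothesis then says $g\oslash k\ge 0$, and pairing with the vector $\sum e_i\otimes e_i$ converts this into $\operatorname{tr}(g_{F_n}X)\ge 0$, a contradiction. Hence $g_{F_n}\in\mathfrak W$, i.e. there is a completely positive $\xi_n$ on $F_n\times F_n$ with $g=\xi_n(\cdot,\cdot)(1-\mathrm J\overline{\mathrm J})$ on $F_n$. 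I take this finite-set step from Lemma 5.1 of \cite{Tirtha_Sau_II}.

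\emph{Passage to all of $F$.} For each finite $E\subseteq F$, let $\mathcal S_E$ be the set of completely positive $\xi:F\times F\to\mathcal B(C(\DC),\mathcal B(\LS))$ that represent $g$ on $E\times E$. Extending $\xi_n$ by zero off $E\times E$ is still completely positive and representing, so each $\mathcal S_E$ is nonempty. To get compactness I need a uniform bound, and it comes from the diagonal estimate already used in the closedness proof: for $s\in E$ choose $\varepsilon_s>0$ with $1-|\mathrm J(s)|^2\ge\varepsilon_s$. Then $\varepsilon_s\,\xi(s,s)(1)\le g(s,s)$, hence $\|\xi(s,s)(1)\|\le\|g(s,s)\|/\varepsilon_s$, and Cauchy--Schwarz for completely positive maps gives
\[
\|\xi(s,t)\|\le \sqrt{\|g(s,s)\|\,\|g(t,t)\|/(\varepsilon_s\varepsilon_t)}.
\]
Crucially, this bound depends only on $s,t$ and not on $E$. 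Restricting each $\mathcal S_E$ to maps obeying these pointwise bounds (the zero extension preserves them), every $\mathcal S_E$ lies in the product $\prod_{(s,t)}$ of closed balls in $\mathcal B(C(\DC),\mathcal B(\LS))$, which is compact in the product of weak-$*$ topologies by Banach--Alaoglu and Tychonoff. Each $\mathcal S_E$ is closed there, since complete positivity and the representing identity (via weak-$*$ continuity of evaluation at $h_{ij}$) pass to limits. Finally, $\mathcal S_{E_1}\cap\dots\cap\mathcal S_{E_m}\supseteq\mathcal S_{E_1\cup\dots\cup E_m}\ne\emptyset$, so the family has the finite intersection property and $\bigcap_E\mathcal S_E\ne\emptyset$; any element of this intersection is the required $\xi$. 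This is the Kurosh/Theorem 11.5 step of \cite{Agler_McCarthy}.

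The main obstacle is the separation step on finite sets, in particular extracting an admissible kernel $k$ from the separating functional $X$ and handling the non-strict versus strict positivity issues there. The compactness step is routine once the $E$-independent bound is in hand.
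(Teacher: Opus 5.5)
Your proposal follows the paper's route: Hahn--Banach separation of $g_{F_n}$ from the weak-$*$ closed wedge $\mathfrak W$, then a Kurosh-type finite-intersection argument to pass from finite sets to all of $F$. Your $E$-independent bound $\|\xi(s,t)\|\le\sqrt{\|g(s,s)\|\,\|g(t,t)\|/(\varepsilon_s\varepsilon_t)}$ is exactly what makes the second step go through.

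One step needs correcting. The elements $J_{\delta_\alpha,f_2}$ do not test admissibility. The Szeg\H{o} factor cancels, so $J_{\delta_\alpha,f_2}(s,t)\bigl(1-\mathrm J(s)\overline{\mathrm J(t)}\bigr)=T_{\eta(s),\eta(t)}$ is independent of $\alpha$, and these elements only reproduce $X\ge 0$. Use instead the point-evaluation maps $\xi(s,t)(h)=h(\alpha)T_{\eta(s),\eta(t)}$. They are completely positive, since
$\sum_{i,j}h_i(\alpha)\overline{h_j(\alpha)}\langle T_{x_i,x_j}v_j,v_i\rangle=\bigl|\sum_i h_i(\alpha)\langle x_i,v_i\rangle\bigr|^2$.
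They place $\bigl[(1-\Phi_\alpha(s^{(i)})\overline{\Phi_\alpha(s^{(j)})})T_{x_i,x_j}\bigr]$ in $\mathfrak W$. Pairing these with $X$ gives $\bigl[(1-\overline{\Phi_\alpha(s^{(i)})}\Phi_\alpha(s^{(j)}))X_{ij}\bigr]\ge 0$, and transposing this is exactly admissibility of $k$.

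Two smaller points:
\begin{itemize}
\item No perturbation is needed. The hypothesis already covers admissible weak kernels, so simply extend $k$ by zero off $F_n\times F_n$.
\item When $\LS$ is infinite-dimensional, $\sum_\lambda e_\lambda\otimes e_\lambda$ is not a vector. Pair with the truncations $\sum_{\lambda\le N}e_\lambda\otimes e_\lambda$ instead, and let $N\to\infty$, using that $X$ is trace class.
\end{itemize}
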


	We recall the notion of a unitary colligation from \cite{Ball, Tirtha_Sau_II} and consider an analogous notion in the present framework. For Hilbert spaces $\LS_1, \LS_2$, a function $f: \G_d \to \mathcal{B}(\LS_1, \LS_2)$ is said to be associated to a \textit{unitary colligation} if there exist a Hilbert space $\HS$, a unital $*$-representation $\rho: C(\DC) \to \mathcal{B}(\HS)$ and a unitary $V: \LS_1 \oplus \HS \to \LS_2 \oplus \HS$ such that for every $s \in \G_d$,
	\[
	f(s)=A+B\rho(\mathrm{J}(s))(I_\HS-D\rho(\mathrm{J}(s)))^{-1}C, \quad 
	\text{where} \quad 
	V=\begin{bmatrix} A & B \\ C & D \end{bmatrix}.		
	\]
	The class of functions $f: \G_d \to \mathcal{B}(\LS_1, \LS_2)$ which is associated to a unitary colligation is denote by $UC_{\G_d}(\LS_1, \LS_2)$. We denote $UC_{\G_d}(\C, \C)$ simply by $UC(\G_d)$.  It follows from the definition of $UC(\G_d)$ that $UC(\G_d) \subseteq S(\G_d)$. Furthermore, $\|\rho(\mathrm{J}(s))\|<1$ since $\rho$ is a unital $*$-representation and $\|\mathrm{J}(s)\|_{\infty, \DC}<1$ for all $s \in \G_d$. Let us recall from \cite{Drit2007_I} that a unital $*$-representation $\rho: C(\DC) \to \mathcal{B}(\HS)$ is said to be \textit{simple} if there exist $\al_1, \dotsc, \al_m \in \DC$ and orthogonal projections $P_1, \dotsc, P_m \in \mathcal{B}(\HS)$ with $P_1+\dotsc +P_m=I_\HS$ such that for all $g \in C(\DC)$,
	\[
	\rho(g)=g(\al_1)P_1+\dotsc+g(\al_m)P_m
	\]
	In the following result, we prove that any $f \in UC(\G_d)$ is in the class $SA(\G_d)$ under the hypothesis that unital $*$-representations in a unitary colligation of $f$ are simple.	
	
	\begin{lem}\label{lem_prelim_III_P}
		Let $f:\mathbb G_d\to \mathcal B(\LS_1,\LS_2)$ be a function in $UC_{\G_d}(\LS_1, \LS_2)$. If $\rho$ is a simple representation, then 
		$f\in SA_{\G_d}(\LS_1,\LS_2)$.
	\end{lem}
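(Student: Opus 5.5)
Since $\rho$ is simple, we may write $\rho(g)=\sum_{k=1}^m g(\al_k)P_k$ with $\al_1,\dotsc,\al_m\in\DC$ and mutually orthogonal projections $P_k$ summing to $I_\HS$. Then $\HS=\HS_1\oplus\dotsb\oplus\HS_m$ with $\HS_k=P_k\HS$, and
\[
\rho(\mathrm{J}(s))=\sum_{k=1}^m \Phi_{\al_k}(s)P_k=\bigoplus_{k=1}^m \Phi_{\al_k}(s)I_{\HS_k}.
\]
So the transfer function becomes $f(s)=A+BE(s)(I_\HS-DE(s))^{-1}C$ with $E(s)=\bigoplus_k\Phi_{\al_k}(s)I_{\HS_k}$. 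This is a polydisc-type realization in the variables $(\Phi_{\al_1}(s),\dotsc,\Phi_{\al_m}(s))$. The plan is to substitute an operator tuple $\underline S\in\mathfrak M_{\G_d}$ for $s$ and check that the resulting operator is a contraction.

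Fix $\underline S\in\mathfrak M_{\G_d}$ on a Hilbert space $\KS$ and set $T_k=\Phi_{\al_k}(\underline S)$. These operators commute, since they are rational functions of a commuting tuple, and by definition of $\mathfrak M_{\G_d}$ each satisfies $\|T_k\|<1$. Put $\mathbf E=\sum_k T_k\otimes P_k$ on $\KS\otimes\HS$. Then $\|\mathbf E\|=\max_k\|T_k\|<1$, because $\mathbf E$ is block diagonal with blocks $T_k\otimes I_{\HS_k}$. Since $\|I\otimes D\|\le 1$, the operator $I-(I\otimes D)\mathbf E$ is invertible by a Neumann series. Define
\[
F=I\otimes A+(I\otimes B)\mathbf E\bigl(I-(I\otimes D)\mathbf E\bigr)^{-1}(I\otimes C):\KS\otimes\LS_1\to\KS\otimes\LS_2 .
\]
Two things remain: (i) $F=f(\underline S)$ in the sense of the vector-valued functional calculus, and (ii) $\|F\|\le1$.

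For (ii) I would run the standard unitary-colligation computation. Take $x\in\KS\otimes\LS_1$, put $h=(I-(I\otimes D)\mathbf E)^{-1}(I\otimes C)x$ and $y=\mathbf E h$. Then $V$ maps $(x,y)\mapsto(Fx,h)$, because $(I\otimes C)x+(I\otimes D)y=h$. Since $I\otimes V$ is unitary,
\[
\|Fx\|^2+\|h\|^2=\|x\|^2+\|\mathbf E h\|^2\le\|x\|^2+\|h\|^2,
\]
so $\|Fx\|\le\|x\|$.

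For (i), the key point is that the Neumann series $\sum_{n\ge0}(\mathbf E(I\otimes D))^n$ converges uniformly for $s$ in a neighbourhood of $\sigma_T(\underline S)$. The scalar function $s\mapsto\max_k|\Phi_{\al_k}(s)|$ is $<1$ on $\G_d$ and continuous there, and $\sigma_T(\underline S)$ is a compact subset of $\G_d$. Hence there is a neighbourhood $U_0$ of $\sigma_T(\underline S)$, with $R(s)(\al_k)\neq0$ on $U_0$, on which $\max_k|\Phi_{\al_k}|\le r<1$. On $U_0$ the series expresses $f$ as a uniform limit of finite sums of terms $g\otimes X$, where $g$ is a product of the $\Phi_{\al_k}$ and $X\in\mathcal B(\LS_1,\LS_2)$.

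The functional calculus is a continuous algebra homomorphism in the scalar variable and sends $\Phi_{\al_k}$ to $T_k$, which agrees with the rational functional calculus. So, term by term, it sends these partial sums to the partial sums of the operator Neumann series defining $F$. Continuity then gives $f(\underline S)=F$.

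The main obstacle is making this last identification rigorous: choosing $U_0$ so that the series converges in $\mathrm{Hol}(U_0,\mathcal B(\LS_1,\LS_2))$ and so that its image converges in norm. For the image, $\|\mathbf E\|<1$ guarantees norm convergence, and the uniform bound on $U_0$ guarantees convergence of the functions. With (i) and (ii) in hand, $\|f(\underline S)\|\le1$ for every $\underline S\in\mathfrak M_{\G_d}$, and since $f$ is holomorphic, $f\in SA_{\G_d}(\LS_1,\LS_2)$.
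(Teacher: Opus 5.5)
Your proposal is correct and follows essentially the same route as the paper: use simplicity of $\rho$ to make $\rho(\mathrm{J}(\underline S))$ block diagonal with blocks $\Phi_{\al_k}(\underline S)$, so its norm is at most $\max_k\|\Phi_{\al_k}(\underline S)\|<1$. Then identify $f(\underline S)$ with the transfer-function expression and get $\|f(\underline S)\|\le 1$ from the unitary-colligation identity. Your vector computation $\|Fx\|^2+\|h\|^2=\|x\|^2+\|\mathbf E h\|^2$ is the same as the paper's factorization $I-f(\underline S)^*f(\underline S)=R(\underline S)^*(I-\rho(\mathrm J(\underline S))^*\rho(\mathrm J(\underline S)))R(\underline S)$. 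You are in fact more careful than the paper on two points it only asserts or checks on elementary tensors: your Neumann-series argument on a neighbourhood where $\max_k|\Phi_{\al_k}|\le r<1$ justifies that the functional calculus produces the transfer-function formula, and your block-diagonal norm bound covers the whole tensor product.
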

	
	\begin{proof}
		Suppose there exist a Hilbert space $\HS$, a unital $*$-representation 
		$
		\rho:C(\mathbb D)\to \mathcal B(\HS)
		$
		and a unitary operator $V: \LS_1 \oplus \HS \to \LS_2 \oplus \HS$ such that for every $s \in \G_d$,
		\[
		f(s)=A+B\rho(\mathrm J(s))(I_{\mathcal H}-D\rho(\mathrm J(s)))^{-1}C, \quad \text{where}\quad 
		V=
		\begin{bmatrix}
			A&B\\
			C&D
		\end{bmatrix}.
		\]
		It is clear that $f\in \operatorname{Hol}(\mathbb G_d, \mathcal B(\LS_1,\LS_2))$. Since $\rho$ is simple, there exist $\alpha_1,\dotsc,\alpha_m \in \DC$ and mutually orthogonal projections
		$P_1,\ldots,P_m\in\mathcal B(\mathcal H)$ such that $P_1+\dotsc+P_m=I_{\HS}$ and
		\[
		\rho(g)=\sum_{j=1}^{m}g(\alpha_j)P_j
		\]
		for every $g \in C(\DC)$. We have by \eqref{eqn_J(z)} that $\rho(\mathrm{J}(s))= \overset{m}{\underset{j=1}{\sum}}P_j \Phi_{\al_j}(s)$. Let $\underline{S}=(S_1, \dotsc, S_{d-1}, P) \in \mathfrak{M}_{\G_d}$ be acting on a Hilbert space $\mathcal{K}$. Then $\rho(J(\underline{S}))=\overset{m}{\underset{j=1}{\sum}}P_j \otimes \Phi_{\al_j}(\underline{S})$. For $h \in \HS$ and $x \in \mathcal{K}$, we have
		\begin{align*}
			\|\rho(J(\underline{S}))(h\otimes x)\|^2
			=
			\left\|
			\sum_{j=1}^{m}P_jh\otimes\Phi_{\alpha_j}(\underline{S})x
			\right\|^2
			&=
			\sum_{j=1}^{m}\|P_jh\|^2
			\|\Phi_{\alpha_j}(\underline{S})x\|^2\\
			&\leq
			\left(\max_{1\leq j\leq m}\|\Phi_{\alpha_j}(\underline{S})\|^2\right)
			\sum_{j=1}^{m}\|P_jh\|^2\|x\|^2\\
			&=
			\left(\max_{1\leq j\leq m}\|\Phi_{\alpha_j}(\underline{S})\|^2\right)
			\|h\|^2\|x\|^2.
		\end{align*}
		Since $\underline{S}\in\mathfrak{M}_{\G_d}$, we have that
		$\|\Phi_{\alpha_j}(\underline{S})\|<1$ for $1\leq j\leq m$ and so,		$\|\rho(J(\underline{S}))\|<1$. Then the operator $I_{\HS\otimes\mathcal K}-(D\otimes I_{\mathcal K})\rho(J(\underline{S}))$ is invertible. The operator-valued functional calculus gives
		\begin{align*}
			f(\underline S)
			=
			A\otimes I_{\mathcal K}
			+
			(B\otimes I_{\mathcal K})
			\rho(\mathrm J(\underline S))
			\left(
			I_{\mathcal H\otimes\mathcal K}
			-(D\otimes I_{\mathcal K})
			\rho(\mathrm J(\underline S))
			\right)^{-1}
			(C\otimes I_{\mathcal K}).
		\end{align*}
		Since $1-\overline{f(s)}f(s)=C^*(I-D\rho(\mathrm{J}(s)))^{-*}(I-\rho(\mathrm{J}(s))^*\rho(\mathrm{J}(s)))(I-D\rho(\mathrm{J}(s)))^{-1}C$ for every $s \in \G_d$, it follows that		
		\begin{align*}
			I_{\LS_1\otimes\mathcal K}
			-f(\underline S)^*f(\underline S)
			=
			R(\underline S)^*
			\left(
			I_{\mathcal H\otimes\mathcal K}
			-\rho(\mathrm J(\underline S))^*
			\rho(\mathrm J(\underline S))
			\right)
			R(\underline S),
		\end{align*}
		where
		$
		R(\underline S)
		=
		\left(
		I_{\mathcal H\otimes\mathcal K}
		-(D\otimes I_{\mathcal K})
		\rho(\mathrm J(\underline S))
		\right)^{-1}
		(C\otimes I_{\mathcal K}).
		$
		Hence, $I_{\LS_1\otimes\mathcal K}
		-f(\underline S)^*f(\underline S)\geq0$ since $\|\rho(\mathrm J(\underline S))\|<1$. Hence,
		$\|f(\underline S)\|\leq1$ and so, $f \in SA_{\G_d}(\LS_1, \LS_2)$.
	\end{proof}
	
Let $k: \G_d \times \G_d \to \mathcal{B}(\LS_2)$ an admissible kernel on $\G_d$, and let $\mathcal H(k)$ denote the reproducing kernel Hilbert space associated with the
$\mathcal B(\LS_2)$-valued kernel $k$, that is,
\[
\mathcal H(k)
=
\overline{\operatorname{span}}
\left\{
k(\cdot,s)u:
s\in\G_d,\;
u\in\LS_2
\right\},
\] 
where $k(\cdot,s)u: \G_d \to \LS_2$ is defined as $t \mapsto k(t,s)u$ and the inner product is given by
\[
\left\langle
k(\cdot,t)v,\,
k(\cdot,s)u
\right\rangle_{\mathcal H(k)}
=
\left\langle
k(s,t)v,\,
u
\right\rangle_{\LS_2}.
\]
Suppose $M_{s_1},\dotsc,M_{s_{d-1}},M_p$ denote the multiplication operators by the
coordinate functions on $\mathcal H(k)$. Since $k$ is admissible, the map $
(s,t)\mapsto
\bigl(1-\Phi_\alpha(s)\overline{\Phi_\alpha(t)}\bigr)k(s,t)
$
is positive semi-definite on $\mathbb G_d\times\mathbb G_d$ for every
$\alpha\in\mathbb D$. For
$
f=k(\cdot,s_1)u_1+\dotsc+k(\cdot, s_n)u_n
$
be a finite linear combination of kernel functions. Then
\begin{align*}
	\|f\|_{\mathcal H(k)}^2-
	\|M_{\Phi_\alpha}f\|_{\mathcal H(k)}^2
	=
	\sum_{i,j=1}^n
	\left\langle
	\bigl(1-\Phi_\alpha(s_i)\overline{\Phi_\alpha(s_j)}\bigr)
	k(s_i,s_j)u_j,
	u_i
	\right\rangle_{\LS_2} \geq 0
\end{align*}
and so, $M_{\Phi_\alpha}$ is a well-defined contraction on
$\mathcal H(k)$. Our next result shows that the coordinate multiplication linear maps $M_{s_1}, \dotsc, M_{s_{d-1}}, M_p$ are bounded on $\HS(k)$. In the case $d=2$, this conclusion is contained in Lemma 3.2 of \cite{Tirtha_Sau}. The argument presented here is different in nature and applies uniformly to all $d \geq 2$.

\begin{prop}\label{prop_Ms}
	Let $k:\mathbb G_d\times\mathbb G_d\to \mathcal B(\mathcal L_2)$ be an admissible kernel, and let
	$\mathcal H(k)$ be the reproducing kernel Hilbert space associated with $k$.
	Then the coordinate multiplication linear maps
	\[
	M_{s_1},\ldots,M_{s_{d-1}},M_p
	\]
	are bounded on $\mathcal H(k)$.
\end{prop}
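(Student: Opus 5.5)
The plan is to obtain the coordinate functions from the contractive multipliers $M_{\Phi_\alpha}$, $\alpha\in\DC$, constructed just before the statement. This is done in two stages. First, Cauchy integrals in the variable $\alpha$ extract the Taylor coefficients of $\alpha\mapsto\Phi_\alpha(s)$. Second, an algebraic recursion, coming from the identity $Q(s)=R(s)\,\mathrm{J}(s)$, expresses $s_1,\dotsc,s_{d-1},p$ as polynomials in these coefficients.

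\emph{Step 1: the coefficient multipliers.} Fix $s\in\G_d$. By the proof of Theorem \ref{prop_char}, $R(s)$ has no zeros on $\DC$. Hence $\alpha\mapsto\Phi_\alpha(s)$ is holomorphic on a neighbourhood of $\DC$, and we can write
\[
\Phi_\alpha(s)=\sum_{n\geq 0}c_n(s)\alpha^n \quad (\alpha\in\DC).
\]
We have $c_n(s)=\frac{1}{2\pi}\int_0^{2\pi}\Phi_{e^{i\theta}}(s)e^{-in\theta}\,d\theta$. It is easiest to work with adjoints. For each $\alpha$ there is a contraction $X_\alpha:=M_{\Phi_\alpha}^*$ with
\[
X_\alpha\, k(\cdot,t)u=\overline{\Phi_\alpha(t)}\,k(\cdot,t)u .
\]
For vectors $f,g$ in $\HS(k)$, the map $\theta\mapsto\langle X_{e^{i\theta}}f,g\rangle$ is continuous. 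This is clear on kernel functions by the continuity of $\alpha\mapsto\Phi_\alpha(t)$, and it passes to limits because $\|X_\alpha\|\le 1$ uniformly. So the weak integral
\[
Y_n=\frac{1}{2\pi}\int_0^{2\pi}e^{in\theta}X_{e^{i\theta}}\,d\theta
\]
defines an operator with $\|Y_n\|\leq 1$, and $Y_n k(\cdot,t)u=\overline{c_n(t)}\,k(\cdot,t)u$. Consequently $M_{c_n}=Y_n^*$ is a contraction on $\HS(k)$: on kernel functions one checks $\langle Y_n^*f,k(\cdot,t)u\rangle=\langle c_n(t)f(t),u\rangle$. In particular, since $c_0=\Phi_0=-s_1/d$, we get that $M_{s_1}$ is bounded with $\|M_{s_1}\|\leq d$.

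\emph{Step 2: recovering the coordinates.} Write $s_0=1$ and $s_d=p$. From \eqref{eqn_QR},
\[
Q(s)(\alpha)=\sum_{j=1}^{d}j(-1)^j s_j\alpha^{j-1},\qquad R(s)(\alpha)=\sum_{j=0}^{d-1}(d-j)(-1)^j s_j\alpha^j .
\]
Comparing the coefficients of $\alpha^n$ in $Q(s)=R(s)\sum_m c_m(s)\alpha^m$ gives, for $0\leq n\leq d-1$,
\[
(n+1)(-1)^{n+1}s_{n+1}=\sum_{j=0}^{n}(d-j)(-1)^j s_j\,c_{n-j}(s).
\]
The right-hand side involves only $s_0,\dotsc,s_n$ and the $c$'s. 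By induction on $n$, each of $s_2,\dotsc,s_{d-1},p$ is therefore a polynomial in $c_0,\dotsc,c_{d-1}$.

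\emph{Step 3: products of multipliers.} The bounded multipliers form an algebra. If $M_\varphi$ and $M_\psi$ are bounded, then $M_\varphi^*M_\psi^*$ sends $k(\cdot,t)u$ to $\overline{\varphi(t)\psi(t)}\,k(\cdot,t)u$, so $M_{\varphi\psi}=M_\psi M_\varphi$ is bounded. Sums are handled the same way. Combining this with Steps 1 and 2 shows that $M_{s_1},\dotsc,M_{s_{d-1}},M_p$ are bounded.

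The main technical point is Step 1: justifying the weak operator integral. This requires the weak continuity of $\alpha\mapsto M_{\Phi_\alpha}^*$ on the unit circle, obtained by density of kernel functions together with the uniform bound, and the identification of the resulting operator as a multiplication operator. Everything else is bookkeeping.
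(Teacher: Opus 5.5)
Your proposal is correct and follows essentially the paper's route: the paper differentiates the holomorphic contraction-valued map $\alpha\mapsto M_{\Phi_\alpha}$ at $\alpha=0$ via the Cauchy integral formula, which gives exactly $n!\,M_{c_n}$, and then recovers $s_1,\dotsc,s_{d-1},p$ recursively by differentiating $\Phi_\alpha(s)R(s)(\alpha)=Q(s)(\alpha)$ at $\alpha=0$, which is the same as your coefficient comparison. Your weak-integral construction of $Y_n$ and the check $Y_n^*=M_{c_n}$ supply the justification that the paper only asserts, namely weak continuity from density plus the uniform bound, and the identification of the resulting operator as a multiplier.
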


\begin{proof}
	Since $k$ is admissible	$(s, t) \mapsto (1-\Phi_\alpha(s)\overline{\Phi_\alpha(t)})k(s,t)$ is a positive semi-definite map on $\G_d \times \G_d$ for every $\alpha \in \DC$. By the multiplier criterion for reproducing	kernel Hilbert spaces, $\|M_{\Phi_\alpha}\|\leq 1$ for every $\alpha \in \DC$. Consider the map $F: \DC \to \mathcal{B}(\HS(k))$ given by
	\[
	F(\alpha)=M_{\Phi_\alpha}.
	\]
Note that $\|F(\alpha)\| \leq 1$. We show that $F$ is holomorphic. Since $\Phi_\alpha(s)$ is holomorphic in $\alpha$ for every
	fixed $s\in\mathbb G_d$ and $\HS(k)$ is a separable Hilbert space, it is suffices to show that $\alpha \to \la F(\alpha)f, g \ra_{\HS(k)}$ is holomorphic on $\DC$ for every $f, g \in \HS(k)$. Let $f=k(\cdot,s)v$ and $g=k(\cdot,t)u$, where $s,t \in \mathbb G_d$ and $u, v \in \mathcal L_2$. Then
	\begin{align*}
		\left\langle F(\alpha)f,g\right\rangle_{\mathcal H(k)}
		=
		\left\langle M_{\Phi_\alpha}k(\cdot,s)v,
		k(\cdot,t)u\right\rangle_{\mathcal H(k)}
		=
		\left\langle
		\left(M_{\Phi_\alpha}k(\cdot,s)v\right)(t),
		u
		\right\rangle_{\mathcal L_2}
		=
		\left\langle
		\Phi_\alpha(t)k(t,s)v,
		u
		\right\rangle_{\mathcal L_2}.
	\end{align*}
For fixed $s, t \in \G_d$ and $u,v \in \LS_2$, the map
	$
	\alpha\mapsto
	\left\langle
	\Phi_\alpha(t)k(t,s)v,
	u
	\right\rangle_{\mathcal L_2}
	$
	is holomorphic on $\DC$ since $\alpha\mapsto\Phi_\alpha(t)$ is holomorphic on
	$\DC$. Consequently, $F$ is holomorphic. By the
	Cauchy integral formula for Banach-valued holomorphic functions, the $n$-th derivative
	$F^{(n)}(0)$ evaluated at $\alpha=0$ belongs to $\mathcal B(\mathcal H(k))$ for every $n \in \mathbb{N}$. Since $F(\alpha)=M_{\Phi_\alpha}$ is a $\mathcal B(\mathcal H(k))$-valued holomorphic map, its derivatives are obtained by differentiating the corresponding multipliers. In particular,  
	\[
	F^{(n)}(0)=M_{\left.\frac{\partial^n}{\partial \alpha^n}\Phi_\alpha\right|_{\alpha=0}}
	\]
	for every $n\in\mathbb N$.  We first prove the desired conclusion for $d=2, 3$. Let $d=2$. In this case,
	\[
	\Phi_\alpha(s_1, p)
	=
	\frac{2p\alpha-s_1}{2-s_1\alpha} \quad (s_1, p) \in \G_2). 
	\]
	Then $\Phi_0(s_1, p)=-s_1\slash 2$ and so, $M_{s_1}=-2F(0)$. Thus, $M_{s_1}$ is bounded. Also, $\Phi_\alpha(s_1, p)(2-s_1\alpha)=2p\alpha-s_1$
	and differentiating both sides with respect to $\alpha$ gives that $\Phi'_\alpha(s_1, p)(2-s_1\alpha)-s_1\Phi_\alpha(s_1, p)=2p$. Putting $\alpha=0$, we have that $2M_p=2F'(0)-M_{s_1}F(0)$ and so, $M_p$ is bounded. Thus, the desired conclusion holds for $d=2$. For $d=3$, we have
	\[
	\Phi_\alpha(s_1, s_2, p)
	=
	\frac{-3p\alpha^2+2s_2\alpha-s_1}
	{3-2s_1\alpha+s_2\alpha^2} \quad (s_1, s_2, p) \in \G_3.
	\]
	Then $\Phi_0(s_1, s_2, p)-s_1\slash 3$ and thus $M_{s_1}=-3F(0)$ is bounded. Differentiating both the sides in $\Phi_\alpha(s_1, s_2, p)(3-2s_1\alpha+s_2\alpha^2)=-3p\alpha^2+2s_2\alpha-s_1$ with respect to $\alpha$, it follows that 
	\[
	\Phi'_\alpha(s_1, s_2, p)(3-2s_1\alpha+s_2\alpha^2)
	+\Phi_\alpha(s_1, s_2, p)(-2s_1+2s_2\alpha)
	=-6p\alpha+2s_2.
	\] 
	Putting $\alpha=0$, we have $3F'(0)-2M_{s_1}F(0)=2M_{s_2}$ and so, $M_{s_2}=(3 \slash 2)F'(0)+(1\slash 3)M_{s_1}^2$. Hence, $M_{s_2}$ is bounded. Also, 
	\[
	\Phi''_\alpha(s_1, s_2, p)(3-2s_1\alpha+s_2\alpha^2)
	+
	2\Phi'_\alpha(s_1, s_2, p)(-2s_1+2s_2\alpha)
	+
	2s_2\Phi_\alpha(s_1, s_2, p)
	=
	-6p.
	\]
	Again putting $\alpha=0$, we have that $3F''(0)-4M_{s_1}F'(0)+2M_{s_2}F(0)=-6M_p$ and so, $M_p$ is bounded. Hence, the desired conclusion holds for $d=3$. The general case follows by repeating the above arguments and successively differentiating the identity
	$
	\Phi_\alpha(s)R(s)(\alpha)=Q(s)(\alpha)
	$
	at $\alpha=0$ to recover the coordinate functions recursively.
\end{proof}	
	
	We are now in a position to prove the realization theorem for functions in $SA_{\G_d}(\LS_1, \LS_2)$.

	\begin{thm}\label{thm_realization_Gd}
		Let $\LS_1$ and  $\LS_2$ be Hilbert spaces. For a function $f: \G_d \to \mathcal{B}(\LS_1, \LS_2)$, the following statements are equivalent: 
		\begin{enumerate}[leftmargin=*]
			\item[$(1)$] $f \in SA_{\G_d}(\LS_1, \LS_2)$;
			
			\item[$(2)$] $(s, t) \mapsto (I_{\LS_2}-f(s)f(t)^*)\oslash k(s, t)$ is a positive semi-definite function for all $\mathcal{B}(\LS_2)$-valued admissible kernel $k$ on $\G_d$;
			
			\item[$(3)$] there exists a completely positive map  $\xi: \G_d \times \mathbb{G}_d \to \mathcal{B}(C(\DC), \mathcal{B}(\LS_2))$ such that
			\[
			I_{\LS_2}-f(s)f(t)^*=\xi(s, t)(1-\mathrm{J}(s)\overline{\mathrm{J}(t)})
			\]
			for all $s, t \in \G_d$;
			
			\item[$(4)$] $f \in UC{\G_d}(\LS_1, \LS_2)$.
		\end{enumerate}
	\end{thm}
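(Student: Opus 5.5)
I would prove the cycle $(1)\Rightarrow(2)\Rightarrow(3)\Rightarrow(4)\Rightarrow(1)$, following the standard scheme of \cite{Agler_McCarthy, Tirtha_Sau, Jain}.

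\emph{Step $(1)\Rightarrow(2)$.} Fix a $\mathcal B(\LS_2)$-valued admissible kernel $k$ on $\G_d$. I would first reduce to the case where $k$ is strictly admissible, i.e.\ replace $k$ by the dilated kernel $k_r(s,t)=k(r\cdot s, r\cdot t)$, where $r\cdot s=(rs_1,\dots,r^{d-1}s_{d-1},r^dp)$, and pass to the limit $r\to1$ at the end. By Proposition \ref{prop_Ms}, the coordinate multipliers $M_{s_1},\dots,M_{s_{d-1}},M_p$ are bounded on $\HS(k)$. Admissibility gives $\|M_{\Phi_\al}\|\le 1$ for all $\al\in\DC$. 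The tuple $\underline{S}=(M_{s_1},\dots,M_p)^*$ has Taylor spectrum contained in the closure of the set of conjugates of points of $\G_d$. After the $r$-dilation, I expect it to lie in $\mathfrak M_{\G_d}$ (property (3) of $\mathfrak M_{\G_d}$, together with closure under adjoints). Hypothesis $(1)$ then gives $\|f^\vee(\underline S)\|\le1$, where $f^\vee(s)=f(\bar s)^*$ (equivalently, a functional-calculus adjoint argument). Evaluating this contraction on kernel functions, via $M_h^*\,k(\cdot,t)u=k(\cdot,t)h(t)^*u$ and its vector-valued analogue, yields positivity of $(I-f(s)f(t)^*)\oslash k$. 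The subtle point is checking that the spectrum and norm conditions hold for the adjoint multiplier tuple, and that the functional calculus of $f$ agrees with multiplication.

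\emph{Step $(2)\Rightarrow(3)$.} This is exactly Lemma \ref{lem:decomposition}, applied with $F=\G_d$ and $g(s,t)=I-f(s)f(t)^*$. Here $g$ is self-adjoint, and hypothesis $(2)$ gives positivity for kernels; for weak admissible kernels, one adds $\epsilon\cdot(\text{Szeg\H{o}-type admissible kernel } k_\alpha$ integrated$)$ and lets $\epsilon\to0$.

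\emph{Step $(3)\Rightarrow(4)$.} This is the lurking-isometry argument. Apply Proposition \ref{prop_204} to $\xi$ to obtain $\HS$, $L$ and $\rho$. Then
\[
I-f(s)f(t)^*=L(s)(1)L(t)(1)^*-L(s)(\mathrm J(s))L(t)(\mathrm J(t))^*,
\]
and using $L(s)(\mathrm J(s))^*=\rho(\mathrm J(s))^*L(s)(1)^*$ this rearranges to
\[
I+L(s)(1)\rho(\mathrm J(s))\rho(\mathrm J(t))^*L(t)(1)^*=f(s)f(t)^*+L(s)(1)L(t)(1)^*.
\]
Hence the map
\[
\begin{bmatrix}u\\ \rho(\mathrm J(t))^*L(t)(1)^*u\end{bmatrix}\mapsto\begin{bmatrix}f(t)^*u\\ L(t)(1)^*u\end{bmatrix}
\]
is an isometry between spans. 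I would extend it to a unitary after enlarging $\HS$ by an infinite-dimensional summand on which $\rho$ is extended by a point evaluation, keeping $\rho$ a representation. Taking adjoints and solving the resulting linear system gives $f(s)=A+B\rho(\mathrm J(s))(I-D\rho(\mathrm J(s)))^{-1}C$.

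\emph{Step $(4)\Rightarrow(1)$.} Lemma \ref{lem_prelim_III_P} only covers simple $\rho$, and I expect this to be the main obstacle. I would approximate the general $\rho$: a spectral measure $E$ on $\DC$ gives $\rho(g)=\int g\,dE$. Partition $\DC$ into small Borel pieces and set $\rho_n(g)=\sum_j g(\al_j)E(\Delta_j)$, a simple representation. Then $\rho_n(\mathrm J(s))\to\rho(\mathrm J(s))$ in norm, uniformly for $s$ in compacta, since $\mathrm J(s)$ is uniformly continuous in $\alpha$ and, jointly, on compacts of $\G_d$. Consequently $f_n\to f$ locally uniformly, where $f_n$ is built from the same $V$ and $\rho_n$. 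Each $f_n\in SA_{\G_d}$ by Lemma \ref{lem_prelim_III_P}, and the holomorphic functional calculus is continuous under locally uniform convergence on a neighbourhood of the compact set $\sigma_T(\underline S)\subset\G_d$. Hence $\|f(\underline S)\|=\lim\|f_n(\underline S)\|\le1$ for every $\underline S\in\mathfrak M_{\G_d}$.
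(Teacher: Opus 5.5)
Your four steps follow the paper's scheme, and $(2)\Rightarrow(3)$, $(3)\Rightarrow(4)$, $(4)\Rightarrow(1)$ go through (after one fix below), but the reduction you propose in $(1)\Rightarrow(2)$ fails. Replacing $k$ by $k_r(s,t)=k(r\cdot s,r\cdot t)$ does not give an admissible kernel, let alone a strictly admissible one. From the formulas for $Q$ and $R$ one has $\Phi_\alpha(r\cdot s)=r\,\Phi_{r\alpha}(s)$. So admissibility of $k$ at the dilated points only yields $\bigl(1-r^2\Phi_\beta(s)\overline{\Phi_\beta(t)}\bigr)k_r(s,t)\geq 0$ for $|\beta|\leq r$. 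Concretely, take $d=2$, $s=(0,0)$, $t=(0,p)$ with $0<|p|<1$, so that $\Phi_\beta(0,q)=\beta q$. The kernel equal to $1$ on the diagonal, to $c=\sqrt{1-r^4|p|^2}$ on the pair $\{r\cdot s,r\cdot t\}$ and to $0$ elsewhere is admissible. Yet admissibility of $k_r$ on $\{s,t\}$ requires $c^2\leq 1-|p|^2$, which fails. Moreover, admissible kernels need not be continuous, so you cannot recover $k$ by letting $r\to1$ in $k_r$. For instance, the kernel equal to $1$ on the diagonal, to $\sqrt{1-|p|^2}$ on $\{s,t\}$ and to $0$ elsewhere is admissible, but $k_r(s,t)=0$ for every $r<1$. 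Dilating the kernel in fact goes the wrong way: the multiplier tuple of $\HS(k_r)$ is unitarily equivalent to $(r^{-1}M_{s_1},\dots,r^{-d}M_p)$ on $\HS(k|_{r\cdot\G_d})$.

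The dilation has to be put on the operator tuple (equivalently on $f$), with $k$ fixed. Restrict $M^*$ to the invariant span of the $k(\cdot,s^{(\ell)})u$, $1\leq \ell\leq m$, and call the result $\underline T$. Its Taylor spectrum is the finite set of conjugate points $\overline{s^{(\ell)}}\in\G_d$, which also sidesteps your unproved claim about $\sigma_T$ of the adjoint multiplier tuple on all of $\HS(k)$. Then $\|\Phi_\alpha(r\cdot\underline T)\|=r\,\|\Phi_{r\alpha}(\underline T)\|\leq r<1$ puts $r\cdot\underline T$ in $\mathfrak M_{\G_d}$. Property (3) of $\mathfrak M_{\G_d}$ alone does not give this, since $\underline T$ satisfies only the non-strict bound. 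Testing $\|f^\vee(r\cdot\underline T)\|\leq 1$ on kernel functions gives positivity of $(I-f(r\cdot s)f(r\cdot t)^*)\otimes k(s,t)$, and $r\to1$ is then handled by continuity of $f$. This is the paper's argument.

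There is a smaller error in $(2)\Rightarrow(3)$. The pulled-back Szeg\H{o} kernel $(1-\Phi_\alpha(s)\overline{\Phi_\alpha(t)})^{-1}$ is not admissible in general. Every multiplier of its reproducing kernel Hilbert space is constant on the fibres of $\Phi_\alpha$, while $\Phi_\beta$ is not: for $d=2$, $\alpha=0$ the fibres are $\{s_1=\mathrm{const}\}$ and $\Phi_\beta(s_1,p)=(2\beta p-s_1)/(2-\beta s_1)$ depends on $p$. You also give no reason why an average over $\alpha$ should be admissible. Use the paper's regularizer $k+\epsilon\,\delta(s,t)I$ instead; it is admissible because $\epsilon(1-|\Phi_\alpha(s)|^2)\delta(s,t)$ is a positive diagonal kernel.

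Your $(3)\Rightarrow(4)$ is the paper's lurking isometry. Enlarging $\HS$ (with $\rho$ extended by a point evaluation) before extending to a unitary supplies a detail the paper omits.

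Your $(4)\Rightarrow(1)$ is a correct and cleaner alternative to the paper's route. Joint uniform continuity of $(\alpha,s)\mapsto\Phi_\alpha(s)$ on $\DC\times K$ gives simple $\rho_n$ with $\rho_n(\mathrm J(s))\to\rho(\mathrm J(s))$ uniformly on each compact $K\subset\G_d$ and $\|\rho_n(\mathrm J(s))\|\leq\|\mathrm J(s)\|_{\infty,\DC}$. Hence $f_n\to f$ locally uniformly at once. The paper instead builds a net via Tychonoff compactness of $\{\Phi_\alpha\}$, obtains only pointwise convergence on finite sets, and then needs a normal-families argument to extract a locally uniformly convergent subsequence.
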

	
	\begin{proof} $(1)\Longrightarrow (2)$.
		Let $k: \G_d \times \G_d \to \mathcal{B}(\LS_2)$ an admissible kernel on $\G_d$. We first prove the implication for $f\in SA_{\G_d}(\LS_1,\LS_2)\cap
		\operatorname{Hol}(\Gamma_d,\mathcal B(\LS_1,\LS_2))$. The general case follows by approximation arguments. Let $\mathcal H(k)$ denote the reproducing kernel Hilbert space associated with the
		$\mathcal B(\LS_2)$-valued kernel $k$, that is,
		$\mathcal H(k)
		=
		\overline{\operatorname{span}}
		\left\{
		k(\cdot,s)u:
		s\in\G_d,\;
		u\in\LS_2
		\right\}$, where $k(\cdot,s)u: \G_d \to \LS_2$ is defined as $t \mapsto k(t,s)u$ and the inner product is given by
		\[
		\left\langle
		k(\cdot,t)v,\,
		k(\cdot,s)u
		\right\rangle_{\mathcal H(k)}
		=
		\left\langle
		k(s,t)v,\,
		u
		\right\rangle_{\LS_2}.
		\]
		Suppose $M_{s_1},\dotsc,M_{s_{d-1}},M_p$ are the multiplication operators by the
		coordinate functions on $\mathcal H(k)$. We have by Proposition \ref{prop_Ms} that $M_{s_1}, \dotsc, M_{s_{d-1}}, M_p$ are bounded, and
		$
		\|\Phi_\alpha(M_{s_1},\ldots,M_p)\|\leq 1
		$
		for every $\alpha \in \DC$. Fix $F_m=\{s^{(1)},\ldots,s^{(m)}\}\subseteq\G_d$ with $s^{(\ell)}=(s_1^{(\ell)},\ldots,s_{d-1}^{(\ell)},p^{(\ell)})$.
		Let
		\[
		\mathcal H_m(k)
		=
		\operatorname{span}
		\{
		k(\cdot,s^{(\ell)})u:
		u\in\LS_2,\,
		1\le\ell\le m
		\}.
		\]
		Since $M_{s_i}^*(k(\cdot,s^{(\ell)})u)
		=
		\overline{s}_i^{(\ell)}\,k(\cdot,s^{(\ell)})u$ for  $1\leq i\leq d-1$ and
		$
		M_p^*(k(\cdot,s^{(\ell)})u)
		=
		\overline{p}^{(\ell)}\,k(\cdot,s^{(\ell)})u$, the space $\mathcal H_m(k)$ is jointly invariant under
		$(M_{s_1}^*,\ldots,M_{s_{d-1}}^*,M_p^*)$. Let
		\[
		(S_1^*,\ldots,S_{d-1}^*,P^*)
		=
		\left(M_{s_1}^*|_{\mathcal H_m(k)},\ldots,
		M_{s_{d-1}}^*|_{\mathcal H_m(k)},
		M_p^*|_{\mathcal H_m(k)}\right).
		\]
		Then $\|\Phi_\alpha(S_1^*,\ldots,S_{d-1}^*,P^*)\|
		\leq 1$ for every $\alpha\in\overline{\mathbb D}$. Since $\G_d$ is $(1,\dotsc, d-1, d)$-quasi-balanced, $r\cdot\underline S^*
		=
		(rS_1^*,\ldots,r^{d-1}S_{d-1}^*,r^dP^*)
		\in\mathfrak M_{\G_d}$ for $0<r<1$. Consider a holomorphic function $\widehat{f}: \Gamma_d \to \mathcal{B}(\LS_2, \LS_1)$ given by $\widehat f(s)=f(\overline s)^*$. Evidently, $\widehat{f} \in  SA_{\G_d}(\LS_2,\LS_1)$.  Since $\Gamma_d$ is a polynomially convex compact set, there exists a sequence of
		$\mathcal B(\LS_2,\LS_1)$-valued polynomials
		$\{p_n\}_{n\ge1}$ converging uniformly to $\widehat f$ on $\Gamma_d$ (see Theorem 28.3 in \cite{Mujica}). By holomorphic functional calculus, $\widehat{f}(r.\underline{S}^*) \in \mathcal{B}(\HS_m \otimes \LS_2, \HS_m \otimes \LS_1)$. By the reproducing property and polynomial approximation, we have that for every $u, v \in \LS_2$ and $1 \leq \ell \leq m$,
		\[
		\widehat f(r\cdot\underline S^*)\left(k(\cdot, s^{(\ell)})u \otimes v\right)
		=
		k(\cdot, s^{(\ell)})u \otimes  f(r\cdot s)^*v
		\] 
		Since $r\cdot\underline S^*\in\mathfrak M_{\G_d}$, it follows that 
		$\|\widehat f(r\cdot\underline S^*)\|
		\leq 1$. For $\{u_i, v_i: i \leq i \leq n\} \subseteq \LS_2$, we have 
		\begin{align*}
			0 & \leq \left\la \left(I_{\HS_m(k)\otimes \LS_2}-\widehat{f}(r\cdot \underline{S}^*)^*\widehat{f}(r \cdot \underline{S}^*)\right) \left[\overset{n}{\underset{j=1}{\sum}}k(\cdot, s^{(j)})u_j \otimes v_j \right], \left[\overset{n}{\underset{i=1}{\sum}}k(\cdot, s^{(i)})u_i \otimes v_i \right]\right\ra \\
			&= \overset{n}{\underset{i, j=1}{\sum}} \left \la k(s^{(i)}, s^{(j)})u_j, u_i \right \ra
			\left \la \left(I_{\LS_2}-f(r\cdot s^{(i)}) f(r\cdot s^{(j)})^*\right)v_j,v_i \right \ra \\
			&=\sum_{i,j=1}^{n}
			\left\langle
			\left(
			(I_{\LS_2}-f(r\cdot s^{(i)})
			f(r\cdot s^{(j)})^*)
			\otimes
			k(s^{(i)},s^{(j)})
			\right)
			(v_j\otimes u_j),
			v_i\otimes u_i
			\right\rangle.
		\end{align*}
		Thus, the map $(s,t) \mapsto
		(I_{\LS_2}-f(r\cdot s)f(r\cdot t)^*)
		\oslash k(s,t)$ is positive semi-definite on \(\G_d\times\G_d\). For a general $f \in SA_{\G_d}(\LS_1,\LS_2)$,
		define
		$f_r(s)=f(r\cdot s)$.
		Then
		$f_r\in
		SA_{\G_d}(\LS_1,\LS_2)\cap
		\operatorname{Hol}(\Gamma_d,\mathcal B(\LS_1,\LS_2))$.
		Applying the previous argument to $f_r$ and letting
		$r\uparrow1$ gives the desired conclusion.
		
		\medskip 
		
		\noindent $(2)\implies (3)$.  Suppose $(s, t) \mapsto (I_{\LS_2}-f(s)f(t)^*)\oslash k(s, t)$ is a positive semi-definite function for all $\mathcal{B}(\LS_2)$-valued admissible kernel $k$ on $\G_d$. Consider the continuous map 
		$g: \G_d \times \G_d \to \mathcal{B}(\LS_2)$ defined as $g(s, t)=I_{\LS_2}-f(s)f(t)^*$. Evidently, $g$ is a self-adjoint map such that $(s, t) \mapsto g(s, t)\otimes k(s, t)$ is positive semi-definite for all $\mathcal{B}(\LS_2)$-valued admissible weak kernel $k$.
		For $\varepsilon>0$, define
		\[
		k_\varepsilon(s,t)=k(s,t)+\varepsilon\delta(s,t)I_{\LS_2},
		\]
		where $\delta(s,t)$ denotes the Kronecker delta function on $\G_d\times\G_d$. Then $k_\varepsilon$ is a $\mathcal{B}(\LS_2)$-valued admissible weak kernel on $\G_d$. Hence,
		$
		(s,t)\mapsto (1-f(s)f(t)^*)k_\varepsilon(s,t)
		$
		is positive semi-definite for every $\varepsilon>0$. Letting $\varepsilon\to0$, we conclude that
		$
		(s,t)\mapsto (1-f(s)f(t)^*)k(s,t)
		$
		is positive semi-definite for every admissible weak kernel $k$ on $\G_d$. The desired conclusion now follows from Lemma \ref{lem:decomposition}.
		
		\medskip 
		
		\noindent $(3) \implies (4)$. Suppose there exist a completely positive map  $\xi: \G_d \times \mathbb{G}_d \to \mathcal{B}(C(\DC), \mathcal{B}(\LS_2))$ such that
		$
		I_{\LS_2}-f(s)f(t)^*=\xi(s, t)(1-\mathrm{J}(s)\overline{\mathrm{J}(t)})
		$
		for all $s, t \in \G_d$ By Proposition \ref{prop_204}, there exist a Hilbert space $\HS$, a map $L: \G_d \to \mathcal{B}(C(\DC), \mathcal{B}(\HS, \LS_2))$ and a unital $*$-representation $\rho: C(\DC) \to \mathcal{B}(\HS)$ such that $\xi(s, t)(h\overline{g})=L(s)(h)(L(t)(g))^*$ and $L(s)(hg)^*=\rho(h)^*L(s)(g)^*$ for all $h, g \in C(\DC)$ and $s, t \in \G_d$. Hence, $I_{\LS_2}-f(s)f(t)^*
		=(L(s)1)(L(t)1)^*-(L(s)\mathrm{J}(s))(L(t)\mathrm{J}(t))^*$ for all $s, t \in \G_d$. Then
		\begin{align*}
			I_{\LS_2}+L(s)(1)\rho(\mathrm{J}(s))\rho(\mathrm{J}(t))^*(L(t)(1))^* =f(s)f(t)^*+ L(s)(1)(L(t)(1))^*
		\end{align*}
		for all $s, t \in \G_d$. Consequently, the linear map 
		\[
		V: \overline{\text{span}} \left\{
		\begin{bmatrix}
			u \\
			\rho(\mathrm{J}(t))^*(L(t)(1))^*u
		\end{bmatrix}
		: u \in \LS_2,  t \in \G_d
		\right\}\to  \overline{\text{span}} \left\{
		\begin{bmatrix}
			f(t)^*u \\
			(L(t)(1))^*u
		\end{bmatrix}
		: u \in \LS_2, t \in \G_d
		\right\}
		\]
		given by 
		\[   \begin{bmatrix}
			u \\
			\rho(\mathrm{J}(t))^*(L(t)(1))^*u
		\end{bmatrix}\mapsto \begin{bmatrix}
			f(t)^*u \\
			(L(t)(1))^*u
		\end{bmatrix}
		\]
		is an isometry $V$ and thus, it can be extended to a unitary from $\LS_2 \oplus \HS$ onto $\LS_1 \oplus \HS$. We can write $V=\begin{bmatrix} A & B \\ C & D \end{bmatrix}$. For every $t \in \G_d$ and $u \in \LS_2$, $Au+B\rho(\mathrm{J}(t))^*(L(t)(1))^*u=f(t)^*u$ and $Cu+D\rho(\mathrm{J}(t))^*(L(t)(1))^*u=(L(t)(1))^*u$. Since $\|\rho(\mathrm{J}(t))\|<1$ and $\|D\| \leq 1$, we have that $(L(t)(1))^*u=(I_\HS-D\rho(\mathrm{J}(t))^*)^{-1}Cu$ for every $u \in \LS_2$. Then 
		$
		f(t)^*=A+B\rho(\mathrm{J}(t))^*(I_\HS-D\rho(\mathrm{J}(t))^*)^{-1}C
		$ 
		and
		\[
		f(t)=A^*+C^*(I_{\HS}-\rho(J(t))D^*)^{-1}\rho(J(t))B^*=A^*+C^*\rho(J(t))(I_{\HS}-D^*\rho(J(t)))^{-1}B^*,
		\]
		which corresponds to the unitary $V^*: \LS_1\oplus \HS \to \LS_2 \oplus \HS$. Therefore, $f \in UC{\G_d}(\LS_1, \LS_2)$.
		
		\medskip 
		
		\noindent $(4) \implies (1)$. Suppose 
		$
		f(s)=A+B\rho(\mathrm J(s))(I_{\HS}-D\rho(\mathrm J(s)))^{-1}C,
		$
		where
		\[
		V=
		\begin{bmatrix}
			A&B\\
			C&D
		\end{bmatrix}:
		\LS_1\oplus\HS
		\longrightarrow
		\LS_2\oplus\HS
		\]
		is unitary and
		\(
		\rho:C(\DC)\to\mathcal B(\HS)
		\)
		is a unital $*$-representation. By the representation theorem for spectral measures, there exists a unique $\mathcal B(\HS)$-valued spectral measure $\mu$ on the Borel
		$\sigma$-algebra of $\DC$ such that
		\[
		\rho(h)=\int_{\DC}h(\alpha)\,d\mu(\alpha),
		\qquad h\in C(\DC).
		\]
		Consider a collection $\mathfrak{F}$ consisting of pairs $\beta=(F, \epsilon)$, where $F$ is a finite subset of $\G_d$ and $\epsilon >0$ ordered by $(F_1, \epsilon_1)\leq (F_2, \epsilon_2)$ if $F_1 \subseteq F_2$ and $\epsilon_1 \geq \epsilon_2$. This makes $\mathfrak{F}$ a directed set. Let $\beta=(F, \epsilon) \in \mathfrak{F}$ and let us consider the collection given by
		\[
		\Lambda=\left\{\Phi_{\alpha} : \al \in \DC \right\}.
		\]
		Clearly, $\Lambda \subseteq B(\G_d, \DC)$ (or equivalently, $\DC^{\G_d})$, the collection of bounded functions from $\G_d$ into $\DC$, which is endowed with the topology of pointwise convergence. By Tychonov's theorem, $B(\G_d, \DC)$ is a compact Hausdorff space. Now $\DC$ is a Tychonov space in the usual metric topology (that is, points are closed and for any closed set and point
		disjoint from it, there is a continuous bounded function separating the two). Consequently, $\DC^{\G_d}$ is Tychonov and so, $\Lambda$ is a Tychonov space. By compactness of $\Lambda$, there exists a finite open cover $\mathcal{U}=\{U^{\beta}_1, \dots, U^{\beta}_m\}$ of $\Lambda$ with the property that $|\psi'(s) - \psi''(s)| < \varepsilon$ for every $\psi', \psi'' \in U^{\beta}_j$ for $1 \leq j \leq m$ and $s \in F$. Let us form a partition $\{\Delta_1^\beta, \dotsc, \Delta_m^\beta\}$ of $\Lambda$ from $\mathcal{U}$ as follows:
		\[
		\Delta^{\beta}_1 = U^{\beta}_1, \quad
		\Delta^{\beta}_2 = U^{\beta}_2 \setminus U^{\beta}_1, \quad 
		\dots, \quad 
		\Delta^{\beta}_m = U^{\beta}_m \setminus (U_1^{\beta} \cup \dotsc \cup U_{m-1}^\beta).
		\]
		We now have a cover  $\{\Delta^{\beta}_1, \dotsc, \Delta^{\beta}_m\}$ of $\Lambda$ consisting of mutually disjoint Borel subsets such that 
		\begin{align}\label{eqn_RP_007}
			|\psi'(s)-\psi''(s)|<\epsilon \quad \text{for every} \quad \psi', \psi'' \in \Delta_j^\beta, \ s \in F
		\end{align}
		for $1 \leq j \leq m$. Also, consider collections $\{\psi_1^\beta, \dotsc, \psi_m^\beta\}$ such that $\psi_j^\beta=\psi_{\theta_j} \in \Delta_j^\beta$ for some $\theta_j \in \DC$, where $j=1, \dotsc, m$. Let us define the map $\eta: \DC \to \Lambda$  and $\rho_{\beta}: C(\DC) \to \mathcal{B}(\HS)$ as follows:
		\[
		\eta(\alpha)=\Phi_\alpha \quad \text{and} \quad  \rho_{\beta}(h)=\overset{m}{\underset{j=1}{\sum}}\mu\left(\eta^{-1}(\Delta_j^\beta)\right)h(\theta_j).
		\]
		Since $\{\Delta_1^\beta, \dotsc, \Delta_m^\beta\}$ is a partition of $\Lambda$ and $\eta(\DC)=\Lambda$, it follows that $\{\eta^{-1}(\Delta_1^\beta), \dotsc, \eta^{-1}(\Delta_m^\beta)\}$ is a partition of $\DC$. Consequently, the operators $\mu\left(\eta^{-1}(\Delta_j^\beta)\right)$ are pairwise orthogonal projections for $1 \leq j \leq m$ such that $\overset{m}{\underset{j=1}{\sum}}\mu\left(\eta^{-1}(\Delta_j^\beta)\right)=I_{\HS}$ and so, $\rho_{\beta}$ is a simple representation and by \eqref{eqn_RP_007}, 
		$\|\rho_{\beta}(\mathrm{J}(s))-\rho(\mathrm{J}(s))\| \leq \epsilon$ for $s \in F$. Consider the map $f_\beta: \G_d \to \mathcal{B}(\LS_1, \LS_2)$ given by
		\[
		f_\beta(s)=A+B\rho_\beta(\mathrm{J}(s))(I_\HS-D \rho_\beta(\mathrm{J}(s)))^{-1}C.
		\] 
		By Lemma \ref{lem_prelim_III_P}, each $f_\beta \in SA_{\G_d}(\LS_1, \LS_2)$. Next,  we show that the net $\{f_\beta\}$ converges pointwise to $f$ on $\G_d$. Let $s \in \G_d$ and $\epsilon'>0$. Choose a finite set $F$ in $\G_d$ such that $s \in F$ and consider $\beta'=(F, \epsilon') \in \mathfrak{F}$. As $\|\mathrm{J}(t)\|_{\infty, \DC}<1$ for all $t \in \G_d$, we can define positive scalars $\delta, r_1, r_2$ and $\epsilon$ as follows:
		\[
		\delta=\min\left\{\frac{1-\|\mathrm{J}(t)\|_{\infty, \DC}}{2} : t \in F \right\}, \ \ r=1-\frac{\delta}{2} \ \ \text{and} \ \ \epsilon<\min\{\delta\slash 2, \epsilon'\}.
		\]
		Choose $\beta=(F, \epsilon) \in \mathfrak{F}$. By the definition of $\mathfrak{F}$, it follows that $\beta' \leq \beta$. Since $\rho$ is a unital $*$-representation, we have that $\|\rho_\beta(\mathrm{J}(t))-\rho(\mathrm{J}(t))\| \leq \epsilon$ for each $t \in F$ and
		\begin{align*}
			\|\rho_{\beta}(\mathrm{J}(t))\| \leq \|\rho_{ \beta}(\mathrm{J}(t))-\rho(\mathrm{J}(t))\|+\|\rho(\mathrm{J}(t))\| \leq \epsilon +1-2\delta<r.
		\end{align*}
		Since $D$ is a contraction, $\|D\rho_\beta(\mathrm{J}(t))\| \leq r$ for all $t \in F$. Let $t \in F$. Note that 
		\begin{align*}
			(D\rho_\beta(\mathrm{J}(t)))^n-(D\rho(\mathrm{J}(t)))^n
			&=D(\rho_\beta(\mathrm{J}(t))-\rho(\mathrm{J}(t)))(D\rho_\beta(\mathrm{J}(t)))^{n-1}\\
			&\quad +(D\rho(\mathrm{J}(t)))( \rho_\beta(\mathrm{J}(t))-\rho(\mathrm{J}(t))) (D\rho_\beta(\mathrm{J}(t)))^{n-2} \\ 
			& \quad +\dotsc+(D\rho(\mathrm{J}(t)))^{n-1}D(\rho_\beta(\mathrm{J}(t))-\rho(\mathrm{J}(t))) 
		\end{align*}
		and so, $\|(D\rho_\beta(\mathrm J(t)))^n-(D\rho(\mathrm J(t)))^n\| \leq n\epsilon r^{n-1}$. Then 
		{\small
			\begin{align*}
				& \|\rho_\beta(\mathrm J(t))(I_\HS-D\rho_\beta(\mathrm J(t)))^{-1}-\rho(\mathrm J(t))(I_\HS-D\rho(\mathrm J(t))^{-1}\|\\
				& \leq \|\rho_\beta(\mathrm J(t))-\rho(\mathrm J(t)\| \|(I_\HS-D\rho_\beta(\mathrm J(t)))^{-1}\|+\|\rho(\mathrm J(t)\| \|(I_\HS-D\rho_\beta(\mathrm J(t)))^{-1}-(I_\HS-D\rho(\mathrm J(t))^{-1}\|\\
				& \leq \frac{\epsilon}{(1-r)^2}
			\end{align*}
		}
		\par \noindent 	for all $t \in F$. Thus the bounded net $\{f_\beta\}$ converges pointwise to $f$ on $\G_d$. Since the net $\{f_\beta\}$ is uniformly bounded by $1$, we have by Theorem 1.4.31 in \cite{Scheidemann} that there exists a subsequence $\{f_{\beta_\ell}\}$ of the net that converges uniformly over compacts subsets of $\G_d$ to $f$. Let $\underline{T} \in \mathfrak{M}_{\G_d}$, and let $\underline{T}$ be acting on a Hilbert space $\mathcal{K}$. Since $f_{\beta_\ell} \in SA_{\G_d}(\LS_1, \LS_2)$, we have that $f_{\beta_\ell}(\underline{T}): \LS_1 \otimes \KS \to \LS_2 \otimes \KS$ satisfying $\|f_{\beta_\ell}(\underline{T})\| \leq 1$ for each $\ell$. Let $u=x\otimes \zeta \in\LS_1\otimes\mathcal K$ and	$v=y\otimes\eta\in\LS_2\otimes\mathcal K$. Then
		\[
		\langle f_{\beta_\ell}(\underline T)u,v\rangle
		=
		\langle f_{\beta_\ell, x,y}(\underline T)\zeta,\eta\rangle,
		\quad \text{where} \quad  f_{\beta_\ell, x,y}(s)
		=
		\langle f_{\beta_\ell}(s)x,y\rangle_{\LS_2}
		\qquad
		(s \in\G_d).
		\]
		Since $f_{\beta_\ell}\to f$ uniformly on compact subsets of $\G_d$, it follows that
		$
		f_{\beta_\ell, x,y}\longrightarrow f_{x,y}$ uniformly on compact subsets of $\G_d$. By the functional calculus presented in \cite{Vasilescu}, it follows that the sequence $f_{\beta_\ell, x, y}(\underline{T}) \to f_{x, y}(\underline{T})$ and so,
		\[
		\langle f_\beta(\underline T)u,v\rangle
		\longrightarrow
		\langle f(\underline T)u,v\rangle.
		\]
		Since $|\langle f_\beta(\underline T)u,v\rangle|
		\leq
		\|u\|\,\|v\|$, we have that
		$
		|\langle f(\underline T)u,v\rangle|
		\le
		\|u\|\,\|v\|.
		$
		By linearity, the same estimate holds for finite sums of elementary tensors. Using density arguments, we finally have that
		\[
		\|f(\underline T)\|
		=
		\sup_{\|u\|=\|v\|=1}
		|\langle f(\underline T)u,v\rangle|
		\le
		1.
		\]
		Therefore, $f \in SA_{\G_d}(\LS_1, \LS_2)$. The proof is now complete.
	\end{proof} 	
	
	\section{Applications: Interpolation, Toeplitz corona and Extension theorems on $\G_d$}\label{sec_03}
	
	\noindent In this section, we explore applications of the realization theorem for the symmetrized polydisc to several well-known problems in classical function theory, including interpolation, Toeplitz corona and extension theorems. We begin by presenting the following interpolation theorem on $\G_d$.
	
\begin{thm}\label{thm_interpolation_Gd}
	Assume that $\LS_1, \LS_2$ are Hilbert spaces. Let $F=\{s^{(1)}, \dotsc, s^{(n)}\} \subseteq \G_d$, and let $A_1, \dotsc, A_n \in \mathcal{B}(\LS_1, \LS_2)$. Then the following are equivalent:
	\begin{enumerate}[leftmargin=*]
		\item[$(1)$] there exists a function $f \in SA_{\G_d}(\LS_1, \LS_2)$ such that $f(s^{(i)})=A_i$ for $1 \leq i \leq n$;
		\item[$(2)$] $\begin{bmatrix} (I_{\LS_2}-A_iA_j^*)\otimes k(s^{(i)}, s^{(j)})\end{bmatrix}_{i, j=1}^n \geq 0$ for every $\mathcal{B}(\LS_2)$-valued kernel $k$ on $\G_d$;
		\item[$(3)$] there exists a completely positive map  $\xi: \G_d \times \mathbb{G}_d \to \mathcal{B}(C(\DC), \mathcal{B}(\LS_2))$ such that
		\[
		I_{\LS_2}-A_iA_j^*=\xi(s^{(i)}, s^{(j)})(1-\mathrm{J}(s^{(i)})\overline{\mathrm{J}(s^{(j)})})
		\]
		for $1 \leq i, j \leq n$.
	\end{enumerate}
\end{thm}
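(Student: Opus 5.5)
Condition $(2)$ is read with $k$ ranging over the $\mathcal{B}(\LS_2)$-valued \emph{admissible} kernels on $\G_d$, as in Theorem \ref{thm_realization_Gd}; the word ``admissible'' is clearly intended. The plan is to prove $(1)\Rightarrow(2)\Rightarrow(3)\Rightarrow(1)$, each step reducing to results already proved, restricted to or extended from the finite set $F$.

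\emph{Step $(1)\Rightarrow(2)$.} If $f\in SA_{\G_d}(\LS_1,\LS_2)$ interpolates, then Theorem \ref{thm_realization_Gd} $(1)\Rightarrow(2)$ shows that $(s,t)\mapsto (I-f(s)f(t)^*)\otimes k(s,t)$ is positive semi-definite on all of $\G_d$ for every admissible $k$. Restricting this to the points of $F$ gives the block matrix inequality.

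\emph{Step $(2)\Rightarrow(3)$.} Define $g:F\times F\to\mathcal{B}(\LS_2)$ by $g(s^{(i)},s^{(j)})=I-A_iA_j^*$; it is self-adjoint. We need $g\oslash k\geq 0$ for every admissible weak kernel $k$ on $F$, not only for admissible kernels on $\G_d$. Two issues arise. First, a weak kernel may have $k(s,s)=0$; we perturb to $k+\varepsilon\delta I$, which stays admissible because $1-|\Phi_\alpha(s)|^2>0$, and then let $\varepsilon\to0$, exactly as in the proof of Theorem \ref{thm_realization_Gd}. Second, an admissible kernel on $F$ must extend to one on $\G_d$; we extend by zero off $F\times F$ and add $\varepsilon\delta(s,t)I$, which again gives an admissible kernel. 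With that established, Lemma \ref{lem:decomposition} applied to the set $F$ gives a completely positive $\xi$ on $F\times F$ representing $g$. To get $\xi$ on $\G_d\times\G_d$ as stated, we extend $\xi$ by zero off $F\times F$. Complete positivity survives, since any finite test set splits into the points in $F$ and the rest, and the latter contribute nothing.

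\emph{Step $(3)\Rightarrow(1)$.} This is the main step: we rerun the lurking-isometry argument of Theorem \ref{thm_realization_Gd} $(3)\Rightarrow(4)$ on the finite set. Proposition \ref{prop_204} with the set $F$ gives $\HS$, $L$ and $\rho$ with
\[
I-A_iA_j^*=L(s^{(i)})(1)L(s^{(j)})(1)^*-L(s^{(i)})(1)\rho(\mathrm{J}(s^{(i)}))\rho(\mathrm{J}(s^{(j)}))^*L(s^{(j)})(1)^*.
\]
Then the map $(u,\rho(\mathrm{J}(s^{(i)}))^*L(s^{(i)})(1)^*u)\mapsto (A_i^*u, L(s^{(i)})(1)^*u)$ is an isometry between the spans. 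We extend it to a unitary after enlarging $\HS$ by an infinite-dimensional summand, so that the defect spaces have equal dimension; $\rho$ is extended by any representation, for example a point evaluation, on the new summand. Writing this unitary as $V$, the function
\[
f(s)=A^*+C^*\rho(\mathrm{J}(s))(I-D^*\rho(\mathrm{J}(s)))^{-1}B^*
\]
is defined on all of $\G_d$, since $\|\rho(\mathrm{J}(s))\|<1$ there, and it lies in $UC_{\G_d}(\LS_1,\LS_2)$. Solving the colligation equations at $s^{(i)}$ as before gives $f(s^{(i)})^*=A_i^*$. Theorem \ref{thm_realization_Gd} $(4)\Rightarrow(1)$ then puts $f$ in $SA_{\G_d}(\LS_1,\LS_2)$.

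The main obstacle I anticipate is the extension bookkeeping: extending admissible kernels from $F$ to $\G_d$ in $(2)\Rightarrow(3)$, and ensuring the isometry extends to a genuine unitary in $(3)\Rightarrow(1)$. Both are handled by the padding devices above.
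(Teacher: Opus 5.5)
Your proposal is correct and follows the same route as the paper. That route is: $(1)\Rightarrow(2)$ by restricting Theorem \ref{thm_realization_Gd} to $F$; $(2)\Rightarrow(3)$ by Lemma \ref{lem:decomposition} on $F\times F$; and $(3)\Rightarrow(1)$ by Proposition \ref{prop_204}, the lurking isometry, and Theorem \ref{thm_realization_Gd} $(4)\Rightarrow(1)$ applied to $V^*$. Your extra steps are all valid and fill in points the paper leaves implicit: reading ``admissible'' into $(2)$, extending admissible weak kernels and $\xi$ from $F$ to $\G_d$ by zero with an $\varepsilon\delta I$ perturbation, and padding $\HS$ with an infinite-dimensional summand so that the isometry extends to a genuine unitary.
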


\begin{proof}
	The implication $(1) \implies (2)$ follows from the equivalence of $(1)$ and $(2)$ in Theorem \ref{thm_realization_Gd}. The part $(2) \implies (3)$ can be obtained by applying Lemma \ref{lem:decomposition} to the self-adjoint map $g : F \times F \to \mathcal{B}(\LS_1, \LS_2)$ defined as $g(s^{(i)}, s^{(j)}) = I_{\LS_2} - A_iA_j^*$. It remains to prove that $(3) \implies (1)$. Suppose there exists a completely positive map  $\xi: \G_d \times \mathbb{G}_d \to \mathcal{B}(C(\DC), \mathcal{B}(\LS_2))$ such that
	\[
	I_{\LS_2}-A_iA_j^*=\xi(s^{(i)}, s^{(j)})(1-\mathrm{J}(s^{(i)})\overline{\mathrm{J}(s^{(j)})})
	\]
	for $1 \leq i, j \leq n$. It follows from Proposition \ref{prop_204} that there exist a Hilbert space $\HS$, a function $L: F \to \mathcal{B}(C(\DC), \mathcal{B}(\HS, \LS_2))$ and a unital $*$-representation $\rho: C(\DC)\to \mathcal{B}(\HS)$ such that
	\[
	\xi(s^{(i)}, s^{(j)})(f\overline{h})=L(s^{(i)})(f)(L(s^{(j)})(h))^* \quad \text{and} \quad L(s^{(i)})(fh)^*=\rho(f)^*(L(s^{(i)})(h))^*
	\] 
	for $1 \leq i, j \leq n$ and $f, h \in C(\DC)$. Consequently, 
	\[
		I_{\LS_2}-A_iA_j^*
		=(L(s^{(i)})1)(L(s^{(j)})1)^*-(L(s^{(i)})\mathrm{J}(s^{(i)}))( L(s^{(j)})\mathrm{J}(s^{(j)}))^* 
		\]
	and so,
	\[
	I_{\LS_2}+L(s^{(i)})(1)\rho(\mathrm{J}(s^{(i)}))\rho(\mathrm{J}(s^{(j)}))^*(L(s^{(j)})(1))^*=A_iA_j^*+L(s^{(i)})(1)(L(s^{(j)})(1))^*
	\]
	for $1 \leq i, j \leq n$. Therefore, the linear map 
	{\small \begin{align*}
		V: \overline{\text{span}} \left\{
		\begin{bmatrix}
			u \\
			\rho(\mathrm{J}(s^{(i)}))^*(L(s^{(i)})1)^*u 	
		\end{bmatrix}
		: u \in \LS_2, 1 \leq i \leq n
		\right\} \to \overline{\text{span}} \left\{
		\begin{bmatrix}
			A_i^*u \\
			(L(s^{(i)})1)^*u
		\end{bmatrix}
		: u \in \LS_2, 1 \leq i \leq n
		\right\}
	\end{align*}
}
\par \noindent 	defined as 	
\[
V\begin{bmatrix}
		u \\
		\rho(\mathrm{J}(s^{(i)}))^*(L(s^{(i)})1)^*u 
	\end{bmatrix}
	= \begin{bmatrix}
		A_i^*u \\
		(L(s^{(i)})1)^*u 
	\end{bmatrix}.
	\] 
One can now extend $V$ to a unitary from $\LS_2 \oplus \HS$ onto $\LS_1 \oplus \HS$. With respect to this decomposition, we write $V=\begin{bmatrix} A & B \\ C & D \end{bmatrix}$. Then $V^*=\begin{bmatrix} A^* & C^* \\ B^* & D^* \end{bmatrix}: \LS_1 \oplus \HS \to \LS_2 \oplus \HS$ is a unitary. It follows from Theorem \ref{thm_realization_Gd} that the map $f: \G_d \to \mathcal{B}(\LS_1, \LS_2)$ given by
	\[
	f(s)=A^*+C^*\rho(\mathrm J(s))\left(I_\HS-D^*\rho(\mathrm J(s))\right)^{-1}B^*
	\]
	is in $SA_{\G_d}(\LS_1, \LS_2)$. Note that for every $s \in \G_d$,  
	\[
	f(s)^*=A+B\left(I_\HS-\rho(\mathrm J(s))^*D\right)^{-1}\rho(\mathrm J(s))^*C=A+B\rho(\mathrm J(s))^*\left(I_\HS-D\rho(\mathrm J(s))^*\right)^{-1}C.
	\]
	Let every $u \in \LS_2$. By definition of the maps $f(s)$ and $V$, we have 
	\begin{align*}
		A_i^*u=Au+B\rho(\mathrm{J}(s^{(i)}))^*(L(s^{(i)})1)^*u=Au+B\rho(\mathrm J(s^{(i)}))^*\left(I_\HS-D\rho(\mathrm J(s^{(i)}))^*\right)^{-1}Cu=f(s^{(i)})^*u
	\end{align*}
	 and so, $f(s^{(i)})=A_i$ for $1 \leq i \leq n$. The proof is now complete. 
\end{proof}
	
Next, we present the Toeplitz corona theorem on the symmetrized polydisc.
	
\begin{thm}\label{thm_TC_Gd}
	Let $\KS_1, \KS_2$ and $\KS_3$ be Hilbert spaces. For maps $\zeta^{(12)}: \G_d \to \mathcal{B}(\KS_1, \KS_2)$ and $\zeta^{(32)}: \G_d \to \mathcal{B}(\KS_3, \KS_2)$, the following are equivalent: 
	\begin{enumerate}[leftmargin=*]
		\item[$(1)$] there exists $\zeta^{(31)} \in SA_{\G_d}(\KS_3, \KS_1)$ such that $\zeta^{(12)}(s)\zeta^{(31)}(s)=\zeta^{(32)}(s)$ for every $s \in \G_d$;
		\item[$(2)$] for every finite subset $F=\{s^{(1)}, \dotsc, s^{(n)}\} \subset \G_d$,
		\[
		\begin{bmatrix}
			\left(\zeta^{(12)}(s^{(i)})\zeta^{(12)}(s^{(j)})^*-\zeta^{(32)}(s^{(i)})\zeta^{(32)}(s^{(j)})^*\right)\otimes k(s^{(i)}, s^{(j)})
		\end{bmatrix}_{i, j=1}^n \geq 0
		\]
		for every $\mathcal{B}(\KS_2)$-valued admissible kernel $k$ on $\G_d$;
		\item[$(3)$] there exist a completely positive map $\xi: \G_d \times \G_d \to \mathcal{B}(C(\DC), \mathcal{B}(\KS_2))$ such that 
		\begin{align*}
			\zeta^{(12)}(s)\zeta^{(12)}(t)^*-\zeta^{(32)}(s)\zeta^{(32)}(t)^*
			=\xi(s, t)(1-\mathrm{J}(s)\overline{\mathrm{J}(t)})
		\end{align*}
		for all $s, t \in \G_d$.	 	
	\end{enumerate}
\end{thm}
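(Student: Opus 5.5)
I will prove the cycle $(1)\Rightarrow(2)\Rightarrow(3)\Rightarrow(1)$, following the pattern of Theorems \ref{thm_realization_Gd} and \ref{thm_interpolation_Gd}.

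For $(1)\Rightarrow(2)$, I take $\zeta^{(31)}\in SA_{\G_d}(\KS_3,\KS_1)$ with $\zeta^{(12)}\zeta^{(31)}=\zeta^{(32)}$. The implication $(1)\Rightarrow(2)$ of Theorem \ref{thm_realization_Gd}, applied with $\LS_2=\KS_1$, shows that $(I_{\KS_1}-\zeta^{(31)}(s)\zeta^{(31)}(t)^*)\oslash k'(s,t)$ is positive semi-definite for every $\mathcal B(\KS_1)$-valued admissible kernel $k'$. I would rather use the operator version directly. For a $\mathcal B(\KS_2)$-valued admissible kernel $k$, the restricted adjoint tuple $r\cdot\underline S^*$ on $\mathcal H_m(k)$ lies in $\mathfrak M_{\G_d}$, as in the proof of Theorem \ref{thm_realization_Gd}. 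Hence $\|\widehat{\zeta^{(31)}_r}(r\cdot\underline S^*)\|\le 1$. Now consider $X=\widehat{\zeta^{(12)}_r}(r\cdot\underline S^*)$, which maps $\mathcal H_m(k)\otimes\KS_2$ to $\mathcal H_m(k)\otimes\KS_1$; it acts on kernel functions by $k(\cdot,s)u\otimes v\mapsto k(\cdot,s)u\otimes\zeta^{(12)}(r s)^*v$. Note that $\zeta^{(12)}$ need only be defined pointwise, so I define $X$ directly by this formula on the finite span. The factorization gives $\widehat{\zeta^{(32)}_r}=\widehat{\zeta^{(31)}_r}\,X$ on this span. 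Therefore $X^*X-X^*\widehat{\zeta^{(31)}_r}{}^*\widehat{\zeta^{(31)}_r}X\ge 0$. Expanding this against sums of elementary tensors of kernel functions gives exactly the positivity in $(2)$ at the dilated points $r\cdot s^{(i)}$.

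The main obstacle is removing the dilation, since $\zeta^{(12)}$ and $\zeta^{(32)}$ are arbitrary maps and not continuous. To handle it, I would avoid the functional calculus for $\zeta^{(12)}$ altogether and argue at $r=1$. Pick vectors $w_j=\zeta^{(12)}(s^{(j)})^*v_j\in\KS_1$. Then the quadratic form in $(2)$ equals
\[
\sum_{i,j}\langle k(s^{(i)},s^{(j)})u_j,u_i\rangle\left\langle\left(I-\zeta^{(31)}(s^{(i)})\zeta^{(31)}(s^{(j)})^*\right)w_j,w_i\right\rangle .
\]
This is not literally of the form $(I-\zeta\zeta^*)\otimes k$ with $k$ valued in $\mathcal B(\KS_1)$. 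So I would instead realize $\zeta^{(31)}$ through Theorem \ref{thm_realization_Gd}$(3)$ as $I-\zeta^{(31)}(s)\zeta^{(31)}(t)^*=\xi(s,t)(1-\mathrm J(s)\overline{\mathrm J(t)})$. Conjugating by $\zeta^{(12)}(s)$ on the left and $\zeta^{(12)}(t)^*$ on the right gives
\[
\zeta^{(12)}(s)\zeta^{(12)}(t)^*-\zeta^{(32)}(s)\zeta^{(32)}(t)^*=\tilde\xi(s,t)(1-\mathrm J(s)\overline{\mathrm J(t)}),
\qquad \tilde\xi(s,t)(h)=\zeta^{(12)}(s)\,\xi(s,t)(h)\,\zeta^{(12)}(t)^*.
\]
The map $\tilde\xi$ is clearly completely positive. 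This proves $(1)\Rightarrow(3)$ directly.

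Then $(3)\Rightarrow(2)$ is routine. By Proposition \ref{prop_204}, $\tilde\xi(s,t)(h\bar g)=L(s)(h)L(t)(g)^*$, and together with the spectral measure of $\rho$ this writes the left side as $\int(1-\Phi_\alpha(s)\overline{\Phi_\alpha(t)})\,d(\cdot)$. Tensoring with an admissible $k$ then gives an integral of positive kernels, which is positive semi-definite. A cleaner alternative is to approximate $\rho$ by simple representations, as in Theorem \ref{thm_realization_Gd}, and use the admissibility of $k$ termwise. For $(2)\Rightarrow(3)$, I would apply Lemma \ref{lem:decomposition} to the self-adjoint map $g(s,t)=\zeta^{(12)}(s)\zeta^{(12)}(t)^*-\zeta^{(32)}(s)\zeta^{(32)}(t)^*$. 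The Kronecker-$\delta$ perturbation used in Theorem \ref{thm_realization_Gd} passes the hypothesis from admissible kernels to admissible weak kernels.

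For $(3)\Rightarrow(1)$, I follow the lurking-isometry argument. Proposition \ref{prop_204} gives $\HS$, $L$ and $\rho$ with
\[
\zeta^{(12)}(s)\zeta^{(12)}(t)^*+L(s)(1)\rho(\mathrm J(s))\rho(\mathrm J(t))^*L(t)(1)^*=\zeta^{(32)}(s)\zeta^{(32)}(t)^*+L(s)(1)L(t)(1)^*.
\]
Consequently, the map
\[
\begin{bmatrix}\zeta^{(12)}(t)^*u\\ \rho(\mathrm J(t))^*L(t)(1)^*u\end{bmatrix}\mapsto\begin{bmatrix}\zeta^{(32)}(t)^*u\\ L(t)(1)^*u\end{bmatrix}
\]
is a well-defined isometry from a subspace of $\KS_1\oplus\HS$ into $\KS_3\oplus\HS$. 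I extend it to a contraction $V$ by setting it to zero on the orthogonal complement. If unitarity is needed, I enlarge $\HS$ by an infinite-dimensional summand on which $\rho$ acts by a point evaluation; this keeps the representation unital.

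Writing $V=\begin{bmatrix}A&B\\C&D\end{bmatrix}$, I solve for $L(t)(1)^*u$ as in Theorem \ref{thm_interpolation_Gd}. This yields $\zeta^{(32)}(t)^*=\zeta^{(31)}(t)^*\zeta^{(12)}(t)^*$, where
\[
\zeta^{(31)}(t)^*=A+B\rho(\mathrm J(t))^*\left(I-D\rho(\mathrm J(t))^*\right)^{-1}C .
\]
Taking adjoints, $\zeta^{(31)}(t)=A^*+C^*\rho(\mathrm J(t))\left(I-D^*\rho(\mathrm J(t))\right)^{-1}B^*$. This is the transfer function of the unitary $V^*$, so $\zeta^{(31)}\in UC_{\G_d}(\KS_3,\KS_1)$. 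By Theorem \ref{thm_realization_Gd}, $\zeta^{(31)}\in SA_{\G_d}(\KS_3,\KS_1)$, and it satisfies $\zeta^{(12)}\zeta^{(31)}=\zeta^{(32)}$.
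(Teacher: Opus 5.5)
Your proof is correct. It departs from the paper's only in how the first implication is obtained, and it is slightly more careful in $(3)\Rightarrow(1)$.

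\textbf{The first implication.} The paper proves $(1)\Rightarrow(2)$ directly. It reruns the functional-calculus argument of Theorem \ref{thm_realization_Gd} on $\HS_n(k)$ for the $\mathcal B(\KS_2)$-valued kernel $k$, applies the dilation only to the holomorphic factor $\zeta^{(31)}$, and tests against $\sum_j k(\cdot,s^{(j)})u_j\otimes\zeta^{(12)}(s^{(j)})^*v_j$.

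You instead prove $(1)\Rightarrow(3)$ by taking the decomposition of $I-\zeta^{(31)}(s)\zeta^{(31)}(t)^*$ from Theorem \ref{thm_realization_Gd}(3) and conjugating by $\zeta^{(12)}$. You then prove $(3)\Rightarrow(2)$ separately: you write the left side as $L(s)(1)\rho(1-\mathrm J(s)\overline{\mathrm J(t)})L(t)(1)^*$, approximate $\rho$ by simple representations, and apply admissibility of $k$ and the Schur product theorem termwise.

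\textbf{What each route buys.} Your route uses the realization theorem as a black box. It also shows that $(3)\Rightarrow(2)$ holds for arbitrary maps, with no holomorphy needed. The paper's route is more direct and avoids invoking the full decomposition for $\zeta^{(31)}$.

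\textbf{The obstacle you flagged.} It does not actually arise in the paper's approach. The vectors $w_j=\zeta^{(12)}(s^{(j)})^*v_j$ are just arbitrary elements of $\KS_1$. The reproducing-kernel argument works equally well for a kernel valued in $\mathcal B(\KS_2)$ paired with coefficient vectors in $\KS_1$. So only the holomorphic function $\zeta^{(31)}$ is ever dilated and then sent to $r\to 1$. The mismatch you noticed is with the literal statement of Theorem \ref{thm_realization_Gd}(2), not with its proof.

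\textbf{The unitary extension.} Enlarging $\HS$ by an infinite-dimensional summand on which $\rho$ acts by a point evaluation is a genuine improvement in $(3)\Rightarrow(1)$. The paper simply asserts that the lurking isometry extends to a unitary. That implicitly requires the two defect spaces to have equal dimension, and your enlargement guarantees it.
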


\begin{proof}	$(1) \implies (2)$. Let $k$ be a $\mathcal{B}(\KS_2)$-valued admissible kernel on $\G_d$ and $F=\{s^{(1)}, \dotsc, s^{(n)}\} \subseteq \G_d$. Consider the reproducing kernel Hilbert space $\HS(k)$ and its subspace $\HS_n(k)$ given by
	\[
	\HS(k)=\overline{\text{span}}\{k(., s)u: s \in \G_d, u \in \KS_2\} \quad \text{and} \quad \HS_n(k)=\overline{\text{span}}\{k(., s^{(\ell)})u: u \in \KS_2, 1 \leq \ell \leq m\}.
	\] 
Here, $k(., s)u: \G_d \to \KS_2$ is the map defined as $t \mapsto k(t, s)u$ that satisfies $\la k(., t)u, k(., s)v\ra_{\HS(k)}=\la k(t, s)u, v\ra_{\KS_2}$ for every $s, t \in \G_d$ and $u, v \in \KS_2$. Let $\underline{M}=(M_{s_1}, \dotsc, M_{s_{d-1}}, M_{p})$ be the tuple of coordinate multiplication operators on $\HS(k)$. Then $\HS_n(k)$ is a joint invariant subspace for $\underline{M}^*=(M_{s_1}^*, \dotsc, M_{s_{d-1}}^*, M_{p}^*)$ and so, we define $\underline{S}=(S_1, \dotsc, S_{d-1}, P)$ on $\HS_n(k)$ as
	\[
	(S_1^*, \dotsc, S_{d-1}^*, P^*)=\left(M_{s_1}^*|_{\HS_n(k)}, \dotsc, M_{s_{d-1}}^*|_{\HS_n(k)}, M_{p}^*|_{\HS_n(k)}\right).
	\]
Since $\G_d$ is a $(1, \dotsc, d-1, d)$-quasi-balanced domain, $r\cdot\underline{S}^*=(rS_1^*, \dotsc, r^{d-1}S_{d-1}^*, r^dP^*) \in \mathfrak{M}_{\G_d}$ for every $r \in (0, 1)$. Let $f: \Gamma_d \to \mathcal{B}(\KS_3, \KS_1)$ be a holomorphic map such that $f \in SA_{\G_d}(\KS_3, \KS_1)$. Consider the holomorphic function $\widehat{f}: \Gamma_d \to \mathcal{B}(\KS_1, \KS_3)$ defined as $\widehat{f}(s)=f(\overline{s})^*$ for $s \in \Gamma_d$. Then $\widehat{f} \in SA_{\G_d}(\KS_1, \KS_3)$ and $\widehat{f}(r\cdot \underline{S}^*) \in \mathcal{B}(\HS_n(k)\otimes \KS_1, \HS_n(k)\otimes \KS_3)$. Let $\{u_i, v_i : 1 \leq i \leq n\} \subset \KS_2$. Following the proof of the part $(1) \implies (2)$ in Theorem \ref{thm_realization_Gd}, one can easily prove that
{\small 
	\begin{align*}
		\overset{n}{\underset{i, j=1}{\sum}} \left \la k(s^{(i)}, s^{(j)})u_j, u_i \right \ra
		\left \la \left(\zeta^{(12)}(s^{(i)})\zeta^{(12)}(s^{(j)})^*-\zeta^{(12)}(s^{(i)})\zeta^{(31)}(s^{(i)}) \zeta^{(31)}(s^{(j)})^*\zeta^{(12)}(s^{(j)})^*\right)v_j,v_i \right \ra
		\end{align*}
}\par\noindent	is non-negative and so, $\begin{bmatrix}
		\left(\zeta^{(12)}(s^{(i)})\zeta^{(12)}(s^{(j)})^*-\zeta^{(32)}(s^{(i)})\zeta^{(32)}(s^{(j)})^*\right)\otimes k(s^{(i)}, s^{(j)})
	\end{bmatrix}_{i, j=1}^n \geq 0$.

	\medskip

	\noindent	$(2) \implies (3)$. The conclusion follows by applying Lemma \ref{lem:decomposition} to the map $g : \G_d \times \G_d \to \mathcal{B}(\KS_2)$ defined as $g(z, w)=\zeta^{(12)}(z)\zeta^{(12)}(w)^*-\zeta^{(32)}(z)\zeta^{(32)}(w)^*$.

	\medskip

	\noindent	$(3) \implies (1)$. Assume that there is a completely positive map $\xi: \G_d \times \G_d \to \mathcal{B}(C(\DC), \mathcal{B}(\KS_2))$ such that $
		\zeta^{(12)}(s)\zeta^{(12)}(t)^*-\zeta^{(32)}(s)\zeta^{(32)}(t)^* =\xi(s, t)(1-\mathrm{J}(s)\overline{\mathrm{J}(t)})$ for $s, t \in \G_d$.
	By Proposition \ref{prop_204}, there exist Hilbert space $\KS$, a map $L: \G_d \to \mathcal{B}(C(\DC), \mathcal{B}(\KS, \KS_2))$ and a unital $*$-representation $\rho: C(\DC) \to \mathcal{B}(\KS)$ such that 
	\begin{align*}
		L(s)(fh))^*=\rho(f)^*(L(s)(h))^* \quad \text{and} \quad \xi(s, t)(f\overline{h})=L(s)(f)(L(t)(g))^*
	\end{align*}
	for every $s, t \in \G_d$ and $f, h \in C(\DC)$. Then
	\begin{align*}	\zeta^{(12)}(s)\zeta^{(12)}(t)^*+L(s)(1)\rho(\mathrm{J}(s))\rho(\mathrm J(t))^*(L(t)(1))^*=
		\zeta^{(32)}(s)\zeta^{(32)}(t)^*+L(s)(1)(L(t)(1))^*
	\end{align*}
	for every $s, t \in \G_d$. Let us consider the map 
	\[
	V: \overline{\text{span}} \left\{
	\begin{bmatrix}
	\zeta^{(12)}(t)^*u \\
		\rho(\mathrm{J}(t))^*(L(t)(1))^*u
	\end{bmatrix}
	: u \in \KS_2,  t \in \G_d
	\right\}\to  \overline{\text{span}} \left\{
	\begin{bmatrix}
		\zeta^{(32)}(t)^*u \\
		(L(t)(1))^*u
	\end{bmatrix}
	: u \in \KS_2, t \in \G_d
	\right\}
	\]
	given by 
	\[   \begin{bmatrix}
		\zeta^{(12)}(t)^*u \\
		\rho(\mathrm{J}(t))^*(L(t)(1))^*u
	\end{bmatrix}\mapsto \begin{bmatrix}
		\zeta^{(32)}(t)^*u \\
		(L(t)(1))^*u
	\end{bmatrix}.
	\]
	Then $V$ is an isometry and so, it can be extended to a unitary from $\KS_1 \oplus \KS$ onto $\KS_3 \oplus \KS$. We can write $V=\begin{bmatrix} A & B \\ C & D \end{bmatrix}$. Let us define $\zeta^{(31)}: \G_d \to \mathcal{B}(\KS_3, \KS_1)$ as
	\[
	\zeta^{(31)}(s)^*=A+B\rho(\mathrm J(s))^*( I-D\rho(\mathrm J(s))^*)^{-1}C.
	\]
	Following the proof of Part $(3) \implies (4)$ of Theorem \ref{thm_realization_Gd}, one can show that $\zeta^{(31)} \in SA_{\G_d}(\KS_3, \KS_1)$. Also, we have 
	\[
	\zeta^{(32)}(s)^*u=A\zeta^{(12)}(s)^*u+B\rho(\mathrm J(s))^*(I-D(\rho(\mathrm J(s)))^*)^{-1}C\zeta^{(12)}(s)^*u=\zeta^{(31)}(s)^*\zeta^{(12)}(s)^*u
		\]
	for every $u \in \KS_2$ and $s \in \G_d$. The proof is now complete.
\end{proof}

Next, we consider an extension problem on $\G_d$. For a subset $U_0$ of $\G_d$, we write $\mathscr{HE}(U_0)$ for the set all bounded functions on $U_0$ having an extension to a holomorphic map in a neighbourhood of $U_0$. We consider the problem of finding necessary and sufficient conditions so that $f \in \mathscr{HE}(U_0)$ has a norm-preserving extension to $g$ in $H^\infty(\mathbb{G}_d)$ satisfying $g\slash \|f\|_{\infty, W} \in SA(\G_d)$. Such extension results are presented for the bidisc, symmetrized bidisc, tetrablock and pentablock in \cite{Agler_McCarthy_2003}, \cite{Tirtha_Sau}, \cite{Jain} and \cite{Pal2026}, respectively. Recall from Section \ref{sec_02} that $s=(s_1, \dotsc, s_{d-1}, p) \in \G_d$ if and only if  $|\Phi_\alpha(s_1, \dotsc, s_{d-1}, p)|<1$ for all $\al \in \DC$. Let $Q\G_d$ be the class of all commuting $d$-tuples $\underline{S}=(S_1, \dotsc, S_{d-1}, P)$ of Hilbert space operators with $\sigma_T(\underline{S}) \subseteq \G_d$ such that 
	\[
	\|\Phi_{\al}(S_1, \dotsc, S_{d-1}, P)\| \leq 1
	\]
		for all $\al \in \DC$. We adopt the terminology in \cite{Mittal} for quantized domains and call the set $Q\G_d$ \textit{quantum symmetrized polydisc}. As discussed in Section \ref{sec_02}, the class $\mathfrak{M}_{\G_d}$ consists of all commuting $d$-tuples $\underline{S}=(S_1, \dotsc, S_{d-1}, P)$ of operators such that 
	\[
	\|\Phi_{\al}(S_1, \dotsc, S_{d-1}, P)\| < 1
	\]
	for all $\al \in \DC$, We further showed that $\sigma_T(\underline{S}) \subseteq \G_d$ for all $\underline{S} \in \mathfrak{M}_{\G_d}$ and thus, $Q\G_d$ contains $\mathfrak{M}_{\G_d}$. For a subset $U_0$ of $\G_d$, a commuting $d$-tuple $\underline{S}=(S_1, \dotsc, S_{d-1}, P)$ is said to be \textit{subordinate} to $U_0$ if $\sigma_T(\underline{S}) \subset U_0$ and $g(\underline{S})=0$ whenever $g$ is holomorphic in a neighbourhood of $U_0$ and $g|_{U_0}=0$. Suppose $h$ is a map on $U_0$ having a holomorphic extension in a neighbourhood of $U_0$ and $\underline{S}=(S_1, \dotsc, S_{d-1}, P)$ is subordinate to $U_0$, then define $h(\underline{S})$ by setting $h(\underline{S})=g(\underline{S})$, where $g$ is any holomorphic extension of $h$ in a neighbourhood of $U_0$. The definition of $h(\underline{S})$ is independent of the choice of the holomorphic extension $g$ of $h$. We now put forth the statement of extension theorem for the domain $\G_d$. 
	
	\begin{thm}\label{thm_ext_Gd}
		Suppose $U_0$ is a subset of $\G_d$ and $f$ is a non-zero function in $\mathscr{HE}(U_0)$. Then $f$ admits a norm preserving extension $g \in H^\infty(\G_d)$ satisfying $\displaystyle \frac{1}{\|f\|_{\infty, U_0}} g \in SA(\G_d)$ if and only if 
		\[
		\|f(\underline{S})\| \leq \|f\|_{\infty, U_0} \quad \text{for every $\underline{S} \in Q\G_d$ subordinate to $U_0$}.
		\]
	\end{thm}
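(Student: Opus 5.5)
Normalize so that $\|f\|_{\infty,U_0}=1$. The plan mirrors the extension theorems for the bidisc and symmetrized bidisc: the forward direction is a limiting argument, and the converse goes through interpolation on finite subsets followed by a normal-families argument.

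\emph{Necessity.} Suppose $g\in SA(\G_d)$ extends $f$, and let $\underline{S}\in Q\G_d$ be subordinate to $U_0$. For $0<r<1$ set $r\cdot\underline{S}=(rS_1,\dotsc,r^{d-1}S_{d-1},r^dP)$. I expect $\|\Phi_\al(r\cdot\underline{S})\|<1$ for every $\al\in\DC$, so that $r\cdot\underline{S}\in\mathfrak{M}_{\G_d}$. To see this, I would show that $r\mapsto\Phi_\al(r\cdot \underline{S})$ extends holomorphically in $r$ to a neighbourhood of the closed disc $|r|\le1$. Here the quasi-balanced structure and the compactness of $\sigma_T(\underline{S})\subset\G_d$ are used. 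The maximum principle then gives norm at most $1$, and strict inequality for $r<1$ follows from a Schwarz-lemma type argument in $r$, since $\Phi_\al(0)=0$. Hence $\|g(r\cdot\underline{S})\|\le 1$. Because $\sigma_T(\underline{S})$ is a compact subset of $\G_d$, the continuity of the Taylor functional calculus gives $g(r\cdot\underline{S})\to g(\underline{S})$ as $r\uparrow1$. Finally, $g$ is a holomorphic extension of $f$ on a neighbourhood of $\G_d\supseteq U_0$, so $f(\underline{S})=g(\underline{S})$, and therefore $\|f(\underline{S})\|\le1$.

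\emph{Sufficiency.} Fix a finite set $F=\{s^{(1)},\dotsc,s^{(n)}\}\subseteq U_0$. I would verify condition $(2)$ of Theorem \ref{thm_interpolation_Gd} for the data $A_i=f(s^{(i)})$, taking $k$ to be any admissible kernel on $\G_d$. Following the proof of $(1)\Rightarrow(2)$ in Theorem \ref{thm_realization_Gd}, let $\underline{S}^*$ be the compression of $\underline{M}^*$ to $\HS_n(k)=\operatorname{span}\{k(\cdot,s^{(i)})u\}$.

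\begin{itemize}
\item By Proposition \ref{prop_Ms} together with admissibility, $\|\Phi_\al(\underline{S}^*)\|\le1$.
\item The tuple $\underline{S}^*$ is diagonalizable with joint eigenvalues $\overline{s^{(i)}}$, so $\sigma_T$ is finite. After the conjugation trick $\widehat f(s)=f(\bar s)$, it becomes subordinate to a finite subset of $U_0$, and functions vanishing on $F$ annihilate it.
\end{itemize}

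The subtle point is whether $\underline{S}^*$ (or its conjugate version) lies in $Q\G_d$, i.e.\ whether $\sigma_T\subseteq\G_d$; this holds since $F\subset\G_d$. The hypothesis then gives $\|\widehat f(\underline{S}^*)\|\le1$, and expanding this inequality on kernel vectors produces the positivity $[(1-A_i\overline{A_j})k(s^{(i)},s^{(j)})]\ge0$. Theorem \ref{thm_interpolation_Gd} then supplies $g_F\in SA(\G_d)$ with $g_F=f$ on $F$.

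To pass from finite sets to all of $U_0$, direct the finite subsets by inclusion. The functions $g_F$ are bounded by $1$ on $\G_d$, so Montel's theorem gives a subnet (a sequence, if $U_0$ is exhausted by a countable dense set) converging locally uniformly to some $g$, with $g=f$ on $U_0$. As in the last step of $(4)\Rightarrow(1)$ of Theorem \ref{thm_realization_Gd}, locally uniform limits stay in $SA(\G_d)$ by continuity of the functional calculus on $\mathfrak{M}_{\G_d}$. Since $g\in SA(\G_d)$ we have $\|g\|_\infty\le1=\|f\|_{\infty,U_0}$, while $g$ extends $f$, so the norm is preserved.

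I expect the main obstacle to be the justification that $r\cdot\underline{S}\in\mathfrak{M}_{\G_d}$ for $\underline{S}\in Q\G_d$, i.e.\ that the non-strict inequalities become strict under the quasi-balanced scaling. This is exactly what the necessity direction relies on. If the holomorphic-in-$r$ argument fails, I would instead use that $\mathfrak{M}_{\G_d}$-type conditions are preserved by the Costara description uniformly in $\al$ and argue by compactness of $\DC$.
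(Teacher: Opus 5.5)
Your proof is correct. The necessity half is essentially the paper's argument: scale to $r\cdot\underline S$, use the Schur--Agler bound there, and let $r\uparrow 1$. Your limit step, via norm continuity of the Taylor calculus under locally uniform convergence near $\sigma_T(\underline S)$, is cleaner than the paper's appeal to dominated convergence in the weak operator topology. The paper simply asserts the step you single out, $r\cdot\underline S\in\mathfrak{M}_{\G_d}$ for $\underline S\in Q\G_d$. It needs no maximum principle. From \eqref{eqn_J(z)} one reads off $\Phi_\alpha(r\cdot s)=r\,\Phi_{r\alpha}(s)$, so $\|\Phi_\alpha(r\cdot\underline S)\|=r\,\|\Phi_{r\alpha}(\underline S)\|\le r<1$, while $\sigma_T(r\cdot\underline S)=r\cdot\sigma_T(\underline S)\subseteq\G_d$. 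Your holomorphic-in-$r$ route would in any case need this same identity on $|r|=1$ to get the boundary bound.

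For sufficiency you take a genuinely different and shorter route. The paper proceeds in stages:
\begin{itemize}
\item it introduces $SA_f(\G_d)$ and $\rho_f$ and takes an extremal interpolant (Lemma \ref{lem_304});
\item it proves compactness of $K_{\mathbf s}$ and runs a minimal-eigenvalue argument to find $A\in K_{\mathbf s}$ and $y$ satisfying \eqref{eqn_302};
\item it builds from $A$ a finite-dimensional tuple in $Q\G_d$, subordinate to the data, at which the extremal function attains its $\mathfrak{M}_{\G_d}$-norm (Lemma \ref{lem_305});
\item only then does it invoke the hypothesis.
\end{itemize}
You apply the hypothesis directly to the same kind of tuple, built from an arbitrary admissible kernel. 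Only its values on $F\times F$ matter, so this also covers the class $K_{\mathbf s}$. You observe that $\|f(\underline S)^*\|\le 1$ on $\operatorname{span}\{k(\cdot,s^{(i)})\}$ is literally the Pick positivity $[(1-\lambda_i\overline{\lambda_j})k(s^{(i)},s^{(j)})]\ge 0$. You then invoke Theorem \ref{thm_interpolation_Gd} (equivalently Theorem \ref{thm_int_P_II}) and pass to a normal-families limit.

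This bypasses the extremal problem entirely and produces the finite-set interpolants explicitly. By contrast, the paper's infimum ranges over interpolants already lying in $SA_f(\G_d)$, so as written its argument presupposes the very existence your route establishes. What the paper's route buys is extra duality information: the extremal $\mathfrak{M}_{\G_d}$-norm is attained at a tuple in $Q\G_d$ subordinate to the data.

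One small repair in your write-up: $s\mapsto f(\bar s)$ is not holomorphic. Either use $\widehat f(s)=\overline{f(\bar s)}$, or more simply apply the hypothesis to $\underline S=(\underline S^*)^*$ itself. Its Taylor spectrum is $F\subseteq U_0$, and it lies in $Q\G_d$ because $\Phi_\alpha(\underline S)^*=\Phi_{\bar\alpha}(\underline S^*)$.
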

	
A subset $U_0$ of the symmetrized polydisc is said to have the \textit{extension property} in $SA(\G_d)$ if for every non-zero $f$ in $\mathscr{HE}(U_0)$, there exists $g \in H^\infty(\G_d)$ such that $ g\slash \|f\|_{\infty, U_0} \in SA(\G_d), g|_{U_0}=f$ and $\|f\|_{\infty, U_0}=\|g\|_{\infty, \G_d}$. As an immediate consequence of Theorem \ref{thm_ext_Gd}, we put forth the following characterization of sets in $\G_d$ having the extension property in $SA(\G_d)$. We skip its proof.

\begin{cor}
	Any set $U_0$ contained in $\G_d$ admits the extension property in $SA(\G_d)$ if and only if $\|h(\underline{S})\| \leq \|h\|_{\infty, U_0}$ for every $h \in \mathscr{HE}(U_0)$ and $\underline{S} \in Q\G_d$ subordinate to $U_0$. 
\end{cor}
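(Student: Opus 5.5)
The corollary follows from Theorem \ref{thm_ext_Gd} applied to each non-zero $h \in \mathscr{HE}(U_0)$, together with a separate treatment of $h=0$. I would prove the two directions separately.

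For the forward direction, assume $U_0$ has the extension property in $SA(\G_d)$. Fix $h \in \mathscr{HE}(U_0)$ and $\underline{S} \in Q\G_d$ subordinate to $U_0$. If $h=0$, then $h(\underline{S})=0$, because the zero function is itself a holomorphic extension of $h$ and $h(\underline{S})$ does not depend on the extension chosen; the inequality is then trivial. If $h\neq 0$, the extension property produces $g \in H^\infty(\G_d)$ with $g|_{U_0}=h$, $\|g\|_{\infty,\G_d}=\|h\|_{\infty,U_0}$ and $g/\|h\|_{\infty,U_0}\in SA(\G_d)$. Thus $h$ admits exactly the kind of norm preserving extension in Theorem \ref{thm_ext_Gd}, and the ``only if'' half of that theorem gives $\|h(\underline{S})\|\le \|h\|_{\infty,U_0}$.

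For the converse, assume the operator inequality holds for all $h\in\mathscr{HE}(U_0)$ and all $\underline{S}\in Q\G_d$ subordinate to $U_0$. Take any non-zero $f \in \mathscr{HE}(U_0)$. The hypothesis for this $f$ is precisely the condition in Theorem \ref{thm_ext_Gd}, so the ``if'' half yields $g\in H^\infty(\G_d)$ with $g|_{U_0}=f$, $\|g\|_{\infty,\G_d}=\|f\|_{\infty,U_0}$ and $g/\|f\|_{\infty,U_0}\in SA(\G_d)$. Since $f$ was arbitrary, this is the extension property.

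There is no real obstacle, since the corollary is a pointwise-in-$f$ restatement of Theorem \ref{thm_ext_Gd}. The only care needed is with the excluded case $f=0$ and with the well-definedness of $h(\underline{S})$, which the paper noted just before Theorem \ref{thm_ext_Gd}.
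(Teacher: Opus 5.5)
Your proposal is correct and follows the paper's route: the paper calls the corollary an immediate consequence of Theorem~\ref{thm_ext_Gd}, applied separately to each non-zero $f \in \mathscr{HE}(U_0)$, and omits the proof. Your separate treatment of $h=0$, using the independence of $h(\underline{S})$ from the chosen holomorphic extension, is the one detail the paper leaves implicit, and you handle it correctly.
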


The proof to the necessary part of Theorem \ref{thm_ext_Gd} is not much difficult. Let $f\in \mathscr{HE}(U_0)$ be non-zero that admits a norm preserving extension $h \in H^(\infty)(\G_d)$ such that $(1\slash \|f\|_{\infty, U_0})h \in SA(\G_d)$. Also, let $\underline{S}=(S_1, \dotsc, S_{d-1}, P) \in Q\G_d$ be subordinate to $U_0$ that acts on a Hilbert space $\mathcal{K}$. For a point $s=(s_1, \dotsc, s_{d-1}, p) \in \C^d$ and $0<r<1$, let us write $r\cdot s=(rs_1, \dotsc, r^{d-1}s_{d-1}, r^dp)$.  Let $r_n$ be an increasing sequence of non-negative numbers converging to $1$. Since $\G_d$ is $(1, \dotsc, d-1, d)$-quasi-balanced, $r_n\cdot s \in \G_d$ for all $s \in \G_d$. Let us define $h_n: \G_d \to \C$ as $h_n(s)=h(r_n\cdot s)$, which forms a uniformly bounded sequence of holomorphic maps that converge pointwise to $h$. By dominated convergence theorem, $h_n(\underline{S})$ converges to $h(\underline{S})$ under the weak operator topology. Since $\frac{1}{\|f\|_{\infty, U_0}}h \in SA(\G_d)$ and $r_n\cdot \underline{S} \in \mathfrak{M}_{\G_d}$, we have
		\[
		|\langle h(\underline{S})x_1, x_2\rangle |=\lim_{n \to \infty}|\langle h(r_n\cdot\underline{S})x_1, x_2\rangle | \leq \lim_{n \to \infty}\|h(r_n \cdot \underline{S})\| \|x_1\| \| x_2\| \leq \|f\|_{\infty, U_0} \|x\| \|y\|.
		\]
		Thus, $\|f(\underline{S})\| =\|h(\underline{S})\|\leq \|f\|_{\infty, U_0}$. For the remainder of the paper, our aim is to present a proof of the converse to Theorem \ref{thm_ext_Gd}. To do so, let us revisit Theorem \ref{thm_interpolation_Gd}, which is the interpolation theorem for $\G_d$. We denote $\textbf{s}$ by the data consisting of distinct points in $\G_d$, say, $s^{(1)}=(s_1^{(1)}, \dotsc, s_{d-1}^{(1)}, p^{(1)}), \dotsc, s^{(n)}=(s_1^{(n)}, \dotsc, s_{d-1}^{(n)}, p^{(n)})$.  We write $K_\textbf{s}$ for the collection of all $n \times n$ strictly positive definite matrices $[k(i, j)]_{i, j=1}^n$ satisfying $k(i, i)=1$ for $i=\{1, \dotsc, n\}$ such that
	\begin{equation}\label{eqn_301}
		\left[\left(1-\Phi_{\al}(s^{(i)})\overline{\Phi_{\al}(s^{(j)})}\right)k(i, j)\right] \geq 0 
		\ \text{for all} \ \al \in \DC.
	\end{equation}
Next, we re-write the $(1) \iff (2)$ part of Theorem \ref{thm_interpolation_Gd} in terms of the set $K_{\textbf{s}}$.
	
	\begin{thm}\label{thm_int_P_II}
		Let $s^{(1)},\dotsc, s^{(n)}$ be distinct points in $\G_d$, and let $\lm_1, \dotsc, \lm_n \in \DC$. Then there exists $g \in SA(\G_d)$ such that $g(s^{(i)})=\lm_i$ for $1 \leq i \leq n$ if and only if 
		\[
		\begin{bmatrix} (1-\lm_i\overline{\lm_j})k(i, j)\end{bmatrix}_{i, j=1}^n \geq 0
		\]
		for all $k \in K_\textbf{s}$.
	\end{thm}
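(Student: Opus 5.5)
Theorem \ref{thm_int_P_II} is a reformulation of Theorem \ref{thm_interpolation_Gd} in the scalar case $\LS_1=\LS_2=\C$, but with the class of admissible kernels on $\G_d$ replaced by the normalized, strictly positive definite matrices in $K_{\textbf{s}}$. The plan is to show that positivity against all admissible kernels is equivalent to positivity against all $k\in K_{\textbf{s}}$. The point is that, for finitely many points, only the restriction of a kernel to $F=\{s^{(1)},\dotsc,s^{(n)}\}$ enters.

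For necessity, suppose $g\in SA(\G_d)$ with $g(s^{(i)})=\lm_i$, and take $k\in K_{\textbf{s}}$. Then $k$ is an admissible kernel on the subset $F\subseteq\G_d$. Rather than extending $k$ to all of $\G_d$, I would work directly on $F$, as in the proof of $(1)\Rightarrow(2)$ of Theorem \ref{thm_realization_Gd}:
\begin{enumerate}[leftmargin=*]
\item Form the finite-dimensional space $\HS(k)$ spanned by the kernel functions $k(\cdot,s^{(j)})$, $1\leq j\leq n$.
\item Define $S_i^*$ and $P^*$ diagonally on these kernel functions, with eigenvalues $\overline{s_i^{(j)}}$ and $\overline{p^{(j)}}$. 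This is well defined because $k$ is strictly positive definite, so the kernel functions are linearly independent.
\item Admissibility (\eqref{eqn_301}) gives $\|\Phi_\al(\underline{S}^*)\|\leq 1$ for every $\al$, while the joint spectrum is $\{\overline{s^{(j)}}\}$, which lies in $\G_d$ since $\G_d$ is invariant under conjugation.
\item For $0<r<1$, pass to $r\cdot\underline{S}^*$. Quasi-balancedness, together with the fact that $\Phi_\al(r\cdot s)$ is a contraction of the original for each point, should put $r\cdot\underline S^*$ in $\mathfrak{M}_{\G_d}$.
\end{enumerate}
Applying $\widehat g(s)=\overline{g(\overline s)}$ then yields $[(1-g(r\cdot s^{(i)})\overline{g(r\cdot s^{(j)})})k(i,j)]\geq 0$. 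Letting $r\to1$ gives the required positivity.

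For sufficiency, I aim for condition (3) of Theorem \ref{thm_interpolation_Gd}, which holds on the finite set $F$ and then gives (1) through the $(3)\Rightarrow(1)$ argument already proved. Condition (3) follows from Lemma \ref{lem:decomposition} applied to $g(s^{(i)},s^{(j)})=1-\lm_i\overline{\lm_j}$ on $F$, provided I verify its hypothesis: $[(1-\lm_i\overline{\lm_j})k(s^{(i)},s^{(j)})]\geq0$ for every scalar admissible weak kernel $k$ on $F$. With $\LS=\C$ the tensor reduces to a product. I would get there in two steps.
\begin{enumerate}[leftmargin=*]
\item \textbf{Strict positivity.} Given an admissible weak kernel $k$, set $k_\varepsilon=k+\varepsilon k_0$, where $k_0(i,j)=\bigl[\int_{\DC}(1-\Phi_\al(s^{(i)})\overline{\Phi_\al(s^{(j)})})^{-1}d\mu(\al)\bigr]$ for a probability measure $\mu$ on $\DC$. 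The matrix $k_0$ is admissible, since it is the $f_1$-type kernel used before Lemma \ref{lem:decomposition}, and it should be strictly positive definite for distinct points. Alternatively, use $\varepsilon I$ and check that $[(1-\Phi_\al(s^{(i)})\overline{\Phi_\al(s^{(j)})})\delta_{ij}]$ is diagonal with positive entries, which is immediate. The diagonal perturbation is the cleaner choice, so I would use $k_\varepsilon=k+\varepsilon I$, which is admissible and strictly positive definite.
\item \textbf{Normalization.} Replace $k_\varepsilon$ by $D^{-1/2}k_\varepsilon D^{-1/2}$ with $D=\mathrm{diag}(k_\varepsilon(i,i))$. Congruence by a positive diagonal matrix preserves both admissibility and the target positivity, so the normalized matrix lies in $K_{\textbf{s}}$.
\end{enumerate}
By hypothesis the target inequality then holds for $k_\varepsilon$, and letting $\varepsilon\to0$ gives it for $k$.

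The main obstacle is the necessity direction, specifically confirming that $r\cdot\underline{S}^*$ lies in $\mathfrak{M}_{\G_d}$, i.e.\ that the strict norm inequality holds. If the quasi-balanced argument is delicate here, I would fall back on the paper's own usage in the proof of $(1)\Rightarrow(2)$ of Theorem \ref{thm_realization_Gd}, where exactly this step is asserted, and cite it.
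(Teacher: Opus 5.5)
Your argument is correct. The paper gives no separate proof: it only declares the statement to be the $(1)\Leftrightarrow(2)$ part of Theorem~\ref{thm_interpolation_Gd} rewritten in terms of $K_{\textbf{s}}$.

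You instead prove the kernel equivalence directly on the finite set $F=\{s^{(1)},\dots,s^{(n)}\}$. For necessity, you run the finite-dimensional model from the $(1)\Rightarrow(2)$ part of Theorem~\ref{thm_realization_Gd} on a given $k\in K_{\textbf{s}}$ itself. For sufficiency, you pass from an arbitrary admissible weak kernel on $F$ to $K_{\textbf{s}}$ via the $k+\varepsilon I$ perturbation and diagonal normalization. You then feed Lemma~\ref{lem:decomposition} on $F$ and finish with the $(3)\Rightarrow(1)$ argument. This is what actually makes the ``rewriting'' legitimate. Condition (2) of Theorem~\ref{thm_interpolation_Gd} quantifies over kernels on all of $\G_d$, and nothing in the paper shows that every $k\in K_{\textbf{s}}$ is (a normalization of) the restriction of an admissible kernel on $\G_d$. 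Meanwhile, Lemma~\ref{lem:decomposition} needs positivity against all admissible weak kernels on $F$. Your route needs neither extension fact.

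The step you flag as the main obstacle is settled by the exact identity $\Phi_\al(r\cdot s)=r\,\Phi_{r\al}(s)$. Under $s_j\mapsto r^j s_j$, the numerator term $j(-1)^j s_j\al^{j-1}$ becomes $r\cdot j(-1)^j s_j(r\al)^{j-1}$, and the denominator becomes $R(s)(r\al)$. This identity passes through the rational functional calculus, giving $\|\Phi_\al(r\cdot\underline{S}^*)\|=r\,\|\Phi_{r\al}(\underline{S}^*)\|\le r<1$ together with $\sigma_T(r\cdot\underline{S}^*)=r\cdot\sigma_T(\underline{S}^*)\subseteq\G_d$. Note that the relevant fact is not a pointwise bound $|\Phi_\al(r\cdot s)|\le|\Phi_\al(s)|$, as your wording suggests. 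That bound can fail, for instance at an $\al$ where $\Phi_\al(s)=0$. What you need is this reparametrization, which works because $r\al\in\DC$.
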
 
	
	Next, we introduce a subclass of $H^\infty(\G_d)$. For $U_0 \subseteq \G_d, f \in \mathscr{HE}(U_0)$ and $h \in H^\infty(\G_d)$, set
	\begin{align*}
	SA_f(\G_d)&=\{g \in H^\infty(\G_d): \|g(\underline{S})\| \leq \|f\|_{\infty, U_0} \ \text{for every $\underline{S} \in \mathfrak{M}_{\G_d}$} \} \ \ \text{and}\\
	\|h\|_{\mathfrak{M}_{\G_d}}&=\sup\{\|h(\underline{S})\|: \underline{S} \in \mathfrak{M}_{\G_d} \}.
	\end{align*} 
	For every $h \in SA_f(\G_d)$, it follows trivially that $\|h\|_{\infty, \G_d} \leq \|h\|_{\mathfrak{M}_{\G_d}} \leq \|f\|_{\infty, U_0}$. For the given data $\textbf{s}=\{s^{(1)}, \dotsc, s^{(n)}\} \subseteq W$ and $\boldsymbol{\lambda}=(\lambda_1, \dotsc, \lambda_n) \in \C^n$, let us define 
	\[
	\rho_f(\textbf{s}, \boldsymbol{\lambda})=\inf\left\{\|h\|_{\mathfrak{M}_{\G_d}}: h \in SA_f(\G_d) \ \text{and} \ h(s^{(i)})=\lambda_i, 1 \leq i \leq n \right\}.
	\]
The following lemma shows that the infimum $\rho_f(\textbf{s}, \boldsymbol{\lambda})$ is attained at some $h$ in $SA_f(\G_d)$. We call such a function $h$ \textit{extremal function} associated with the data $\textbf{s}$ and $\boldsymbol{\lambda}$.
	
	\begin{lem}\label{lem_304}
		Suppose $U_0 \subseteq \G_d$, and $f \in \mathscr{HE}(U_0)$ is non-zero. For given $\textbf{s}=\{s^{(1)}, \dotsc, s^{(n)}\} \subseteq U_0$ and $\boldsymbol{\lambda}=(\lambda_1, \dotsc, \lambda_n) \in \C^n$, there exists an extremal function in $SA_f(\G_d)$ associated with $\textbf{s}$ and $\boldsymbol{\lambda}$.
	\end{lem}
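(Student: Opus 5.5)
The plan is a normal-families compactness argument. First we must make sure the infimum is over a nonempty set; if it is empty, then $\rho_f(\textbf{s},\boldsymbol{\lambda})=+\infty$ and there is nothing to prove, so we assume from now on that it is nonempty. Then $\rho_f(\textbf{s},\boldsymbol{\lambda})$ is finite, and we choose a minimizing sequence $\{h_m\}\subseteq SA_f(\G_d)$ with $h_m(s^{(i)})=\lambda_i$ for $1\le i\le n$ and $\|h_m\|_{\mathfrak{M}_{\G_d}}\downarrow \rho_f(\textbf{s},\boldsymbol{\lambda})$. Every $h\in SA_f(\G_d)$ satisfies $\|h\|_{\infty,\G_d}\le\|h\|_{\mathfrak{M}_{\G_d}}\le \|f\|_{\infty,U_0}$, so the sequence is uniformly bounded on $\G_d$. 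Montel's theorem (as used via Theorem 1.4.31 of \cite{Scheidemann} in the proof of Theorem \ref{thm_realization_Gd}) then gives a subsequence, still denoted $\{h_m\}$, converging uniformly on compact subsets of $\G_d$ to some $h\in\mathrm{Hol}(\G_d)$. This limit satisfies $\|h\|_{\infty,\G_d}\le\|f\|_{\infty,U_0}$, so $h\in H^\infty(\G_d)$, and $h(s^{(i)})=\lambda_i$ by pointwise convergence.

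Next we show that $\|h\|_{\mathfrak{M}_{\G_d}}\le \liminf_m\|h_m\|_{\mathfrak{M}_{\G_d}}=\rho_f(\textbf{s},\boldsymbol{\lambda})$. This also yields $\|h\|_{\mathfrak{M}_{\G_d}}\le\|f\|_{\infty,U_0}$, so $h\in SA_f(\G_d)$, and therefore $h$ attains the infimum. Fix $\underline{S}\in\mathfrak{M}_{\G_d}$. Its Taylor spectrum $\sigma_T(\underline{S})$ is a compact subset of $\G_d$, so we can choose a neighbourhood $U$ with $\sigma_T(\underline{S})\subset U\Subset\G_d$. The holomorphic functional calculus is continuous with respect to uniform convergence on compact neighbourhoods of the spectrum, as in the continuity statement recalled from \cite{Ambrozie, Esch_Put, Vasilescu} and already used at the end of the proof of Theorem \ref{thm_realization_Gd}. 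Hence $h_m(\underline{S})\to h(\underline{S})$ in norm, and so
\[
\|h(\underline{S})\|=\lim_m\|h_m(\underline{S})\|\le \liminf_m \|h_m\|_{\mathfrak{M}_{\G_d}}=\rho_f(\textbf{s},\boldsymbol{\lambda}).
\]
Taking the supremum over $\underline{S}\in\mathfrak{M}_{\G_d}$ gives the claim.

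The main point to watch is the continuity of the functional calculus in the last step: we need convergence that is uniform on a compact neighbourhood of $\sigma_T(\underline{S})$, and we need the limit to be independent of the particular $\underline{S}$. Both follow from the Cauchy–Weil/Vasilescu integral representation over a fixed compact set containing the spectrum, because the bound coming from that representation depends only on $\underline{S}$ and the chosen neighbourhood. If one wants to avoid norm continuity, the weak-operator argument from the end of the proof of Theorem \ref{thm_realization_Gd} also suffices: it gives $|\langle h(\underline{S})x,y\rangle|\le\rho_f(\textbf{s},\boldsymbol{\lambda})\|x\|\|y\|$, which yields the same norm bound.
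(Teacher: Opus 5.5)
Your proposal is correct and follows essentially the paper's argument: take a minimizing sequence, extract a normal-families limit $h$, and pass to the limit in the functional calculus to get $\|h(\underline S)\|\le\rho_f(\textbf{s},\boldsymbol{\lambda})$ for every $\underline S\in\mathfrak{M}_{\G_d}$. The paper gets this last step from weak-operator convergence via dominated convergence, whereas you use norm continuity of the Taylor functional calculus, which is if anything cleaner. One small correction: if no function in $SA_f(\G_d)$ interpolates the data (e.g.\ some $|\lambda_i|>\|f\|_{\infty,U_0}$), the lemma does not hold vacuously; it fails, so nonemptiness of the feasible set is a tacit hypothesis, which the paper also assumes, rather than a case with nothing to prove.
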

	
	\begin{proof}
	Suppose $\{h_m\}$ is a sequence of functions in $SA_f(\G_d)$ satisfying $h_m(s^{(i)})=\lm_i$ for $1 \leq i \leq n$ and $\rho_f(\textbf{s}, \boldsymbol{\lambda})=\underset{m \to \infty}\lim\|h_m\|_{\mathfrak{M}_{\G_d}}$. Then $\|h\_m|_{\infty, \G_d} \leq \|f\|_{\infty, U_0}$ and so, there exists a subsequence $\{h_{m_\ell}\}$ of $\{h_m\}$ converging pointwise to $h \in H^\infty(\G_d)$. Let $\underline{S} \in \mathfrak{M}_{\G_d}$. By dominated convergence theorem, $h_{m_\ell}(\underline{S})$ converges to $h(\underline{S})$ in the weak-operator topology and thus, $\|h(\underline{S})\| \leq \|f\|_{\infty, U_0}$. Also, $h(s^{(i)})=\lm_i$ for $1 \leq i \leq n$ and so, $\rho_f(\textbf{s}, \boldsymbol{\lambda}) \leq \|h\|_{\mathfrak{M}_{\G_d}}$. Let $\underline{S} \in \mathfrak{M}_{\G_d}$ be a commuting tuple of operators acting on $\mathcal{K}$. Then 
		\[
		|\langle h(\underline{S})x_1, s_2\rangle | \leq \lim_{\ell \to \infty}\|h_{m_\ell}(\underline{S})\| \|x_1\| \| x_2\| \leq \lim_{\ell \to \infty}\|h_{m_\ell}\|_{\mathfrak{M}_{\G_d}}\|x_1\|\|x_2\|=\rho_f(\textbf{s}, \boldsymbol{\lambda}) \|x_1\| \|x_2\|
		\]
		for $x_1, x_2 \in \mathcal{K}$ and thus, $\|h(\underline{S})\| \leq \rho_f(\textbf{s}, \boldsymbol{\lambda})$ for all $\underline{S} \in \mathfrak{M}_{\G_d}$. Therefore, $\|h\|_{\mathfrak{M}_{\G_d}} \leq \rho_f(\textbf{s}, \boldsymbol{\lambda})$.
	\end{proof}
	
	The following lemma provides the final step for the proof of Theorem \ref{thm_ext_Gd}.

	\begin{lem}\label{lem_305}
		Let $U_0 \subseteq \G_d$, and let $f \in \mathscr{HE}(U_0)$ be non-zero. Suppose $h \in SA_f(\G_d)$ is an extremal function associated with the data $\textbf{s}=\{s^{(1)}, \dotsc, s^{(n)}\} \subseteq U_0$ and $\boldsymbol{\lambda}=(\lambda_1, \dotsc, \lambda_n) \in \C^n$. Then there exists $\underline{S} \in Q\G_d$ subordinate to $\textbf{s}$ such that $\|h(\underline{S})\|=\rho_f(\textbf{s}, \boldsymbol{\lambda})$.
	\end{lem}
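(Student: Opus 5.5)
Set $\rho=\rho_f(\textbf{s},\boldsymbol{\lambda})$; the claim is trivial if $\rho=0$ (take any scalar point), so assume $\rho>0$. I would follow the Agler--McCarthy extremal argument as adapted in \cite{Tirtha_Sau, Jain}. Since $h/\rho$ is, after a slight enlargement, the minimal-norm Schur--Agler interpolant of $\lambda_i/\rho$, it should be a boundary case of the Pick condition in Theorem \ref{thm_int_P_II}. The plan is to extract from this a degenerate admissible kernel on $\textbf{s}$, and then realise the coordinate multiplications on the associated finite-dimensional reproducing kernel Hilbert space as the required tuple $\underline{S}$.

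\textbf{Step 1 (a degenerate kernel).} For $0<t<1$, the data $\lambda_i/(t\rho)$ admits no interpolant $g$ with $\|g\|_{\mathfrak{M}_{\G_d}}\le 1$: otherwise $t\rho g$ would interpolate with $\mathfrak{M}_{\G_d}$-norm $t\rho<\rho$ and would still lie in $SA_f(\G_d)$, contradicting extremality. By Theorem \ref{thm_int_P_II}, for each $t$ there is $k_t\in K_\textbf{s}$ with $[(1-\lambda_i\overline{\lambda_j}/(t\rho)^2)k_t(i,j)]\not\ge 0$. The set $K_\textbf{s}$ has unit diagonal, so it is bounded, and its closure $\overline{K_\textbf{s}}$ is compact. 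Letting $t\uparrow 1$ and passing to a subsequence gives $k\in\overline{K_\textbf{s}}$ such that $[(\rho^2-\lambda_i\overline{\lambda_j})k(i,j)]$ is positive semidefinite but singular. Positivity holds because $h/\rho$ exists, so the Pick condition holds at $t=1$ for all of $\overline{K_\textbf{s}}$. Singularity holds as a limit of matrices that fail to be positive. The kernel $k$ satisfies \eqref{eqn_301} and is positive semidefinite, though possibly singular.

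\textbf{Step 2 (the tuple).} Let $\mathcal{H}(k)$ be the reproducing kernel Hilbert space of $k$ on the finite set $\textbf{s}$, of dimension at most $n$. Define $S_j^*k_{s^{(i)}}=\overline{s^{(i)}_j}\,k_{s^{(i)}}$ and similarly for $P^*$. These operators are well defined because any linear relation among the kernel functions is preserved, which is the content of the multiplier argument preceding Proposition \ref{prop_Ms}. The tuple $\underline{S}=(S_1,\dots,S_{d-1},P)$ is commuting and jointly diagonalisable with joint spectrum contained in $\textbf{s}\subset\G_d$. For any $g$ holomorphic near $\textbf{s}$, $g(\underline{S})$ is the multiplier $M_g$, since $g(\underline{S})^*k_{s^{(i)}}=\overline{g(s^{(i)})}k_{s^{(i)}}$. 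Hence $g(\underline{S})=0$ whenever $g|_{\textbf{s}}=0$, and $\underline{S}$ is subordinate to $\textbf{s}$. Admissibility \eqref{eqn_301} gives $\|\Phi_\alpha(\underline{S})\|=\|M_{\Phi_\alpha}\|\le 1$ for all $\alpha\in\DC$, so $\underline{S}\in Q\G_d$.

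\textbf{Step 3 (norm attainment).} Since $h(\underline{S})=M_h$ is multiplication by $\lambda_i$ on $k_{s^{(i)}}$, we have $\rho^2 I-h(\underline{S})h(\underline{S})^*\ge 0$ if and only if the matrix $[(\rho^2-\lambda_i\overline{\lambda_j})k(i,j)]$ is positive semidefinite. That matrix is positive but singular, so there is a nonzero vector $\sum c_i k_{s^{(i)}}$ annihilated by the form. This gives $\|h(\underline{S})^*x\|=\rho\|x\|$ for some $x\neq 0$, hence $\|h(\underline{S})\|\ge\rho$.

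\textbf{Expected obstacle.} The main difficulty is the reverse inequality $\|h(\underline{S})\|\le\rho$, because $\underline{S}$ lies in $Q\G_d$ but not necessarily in $\mathfrak{M}_{\G_d}$. It already follows from the positivity in Step 3. Alternatively, one can use $r\cdot\underline{S}\in\mathfrak{M}_{\G_d}$, the bound $\|h(r\cdot\underline{S})\|\le\|h\|_{\mathfrak{M}_{\G_d}}=\rho$ from Lemma \ref{lem_304}, and the limit $r\to1$, as in the necessity part of Theorem \ref{thm_ext_Gd}. The remaining delicate points are the following. First, the limiting kernel may lose strict positivity; this is handled by working with $\mathcal{H}(k)$, which need not be $n$-dimensional. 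Second, one must ensure $\mathcal{H}(k)\neq 0$, which is automatic since $k(i,i)=1$.
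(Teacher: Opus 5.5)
Step 1 is a valid and slightly more direct replacement for the paper's $\Upsilon$/minimal-eigenvalue argument, and Step 2 matches the paper's construction. Step 3, however, has a genuine gap, exactly at the point you flag and then set aside: the limit $k\in\overline{K_\textbf{s}}$ might be singular. Singularity of $B=[(\rho^2-\lambda_i\overline{\lambda}_j)k(i,j)]$ gives a nonzero coefficient vector $c$ with $c^*Bc=0$. But the element $\sum_i c_ik_{s^{(i)}}$ of $\mathcal H(k)$ is zero whenever $c\in\ker k$, and then nothing follows about $\|h(\underline{S})\|$. Worse, when $k$ is singular the conclusion of Step 1 carries no information. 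Write $B=\rho^2k-\Lambda k\Lambda^*$ with $\Lambda=\operatorname{diag}(\lambda_1,\dots,\lambda_n)$. Any $0\neq c\in\ker k$ gives $c^*Bc=-c^*\Lambda k\Lambda^*c\le 0$, so $B\ge 0$ forces $Bc=0$. A degenerate limit kernel therefore makes the Pick matrix singular for free. Passing to the smaller space $\mathcal H(k)$ does not repair this; it is precisely where norm attainment fails.

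The missing ingredient is $\overline{K_\textbf{s}}=K_\textbf{s}$: every positive semidefinite matrix with unit diagonal satisfying \eqref{eqn_301} is invertible. The paper proves this, as the compactness of $K_\textbf{s}$, before its extremal argument. It follows from the same invariance you need in Step 2.
\begin{itemize}
\item Condition \eqref{eqn_301} reads $k-D_\alpha kD_\alpha^*\ge 0$ with $D_\alpha=\operatorname{diag}(\Phi_\alpha(s^{(1)}),\dots,\Phi_\alpha(s^{(n)}))$. So $kc=0$ implies $kD_\alpha^*c=0$ for all $\alpha\in\DC$.
\item Extract Taylor coefficients at $\alpha=0$ and use the recursion in the proof of Proposition \ref{prop_Ms}. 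This shows $\ker k$ is invariant under the diagonal matrices of conjugated coordinates. (The contractivity computation preceding that proposition only covers $\Phi_\alpha$, not the coordinates, so it does not by itself justify your well-definedness claim in Step 2.)
\item Since the points are distinct, Lagrange interpolation then makes $\ker k$ invariant under every diagonal matrix.
\item So if $c\in\ker k$ has $c_i\neq 0$, then $e_i\in\ker k$, contradicting $k(i,i)=1$.
\end{itemize}
With $k$ invertible, $c\neq 0$ gives $\sum_ic_ik_{s^{(i)}}\neq 0$ and $\mathcal H(k)$ is $n$-dimensional. Your Step 3 then goes through as in the paper.
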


	\begin{proof}
		Let us denote by $\rho=\rho_f(\textbf{s}, \boldsymbol{\lambda})$. If $\rho=0$, then the desired conclusion follows trivially. Let us assume that $\rho>0$. Evidently, $\frac{1}{\rho}h \in SA(\G_d)$ for $1\leq i \leq n$. By Theorem \ref{thm_int_P_II},
		\[
		\left[(\rho^2-\lm_i\overline{\lm}_j)k(i, j)\right]_{i, j=1}^n \geq 0
		\] 
		for every $k \in K_\textbf{s}$. We now show that the set $K_\textbf{s}$ is compact. Clearly, $K_\textbf{s}$ is bounded as $\|k\| \leq n$ for every $k \in K_\textbf{s}$. Let $\{k_m\}$ be a sequence in $K_\textbf{s}$ that converges to $k$ in the matrix norm. Evidently, $k$ is positive semi-definite and the conditions as in \eqref{eqn_301} hold for $k$. It is only left to prove that $k$ is invertible. Assume on the contrary that there is a non-zero vector $w=(w_1, \dotsc, w_n)^t \in \C^n$ such that $kw=0$. Let $M_1, \dotsc, M_{d-1}, M_d$ be  $n \times n$ diagonal matrices whose $(i, i)$-th entries are $s_1^{(i)}, \dotsc, s_{d-1}^{(i)}, p^{(i)}$, respectively. By \eqref{eqn_301}, $k(M_1w)=\dotsc=k(M_{d-1}w)=k(M_dw)=0$ and so, $k(h(M_1, \dotsc, M_{d-1}, M_d)w)=0$ for any holomorphic polynomial $g$ in $d$ many variables. One can choose $1 \leq i \leq n$ so that $w_i \ne 0$, and a polynomial $g$ such that $g(s^{(i)})=1$ and $g(s^{(j)})=0$ for $1 \leq j \leq n$ with $j$'s distinct from $i$. Then $k(g(M_1, \dotsc, M_{d-1}, M_d)w)$ is a scalar multiple of the $(i, i)$-th column of $k$. Hence, $k(i, i)=0$, a contradiction to the fact that $k(i, i)=1$. Hence, $K_\textbf{s}$ is compact. Next, consider the collection 
		\[
		\Upsilon=\left\{\lambda : 0 < \lambda \leq \|f\|_{\infty, U_0} \ \ \text{and} \ \ \left[(\lambda^2-\lm_i\overline{\lm}_j)k(i, j)\right]_{i, j=1}^n \geq 0 \ \text{for all} \ k \in K_\textbf{s} \right\}
		\]
		which is non-empty as $\rho \in \Upsilon$. Let $\lambda \in \Upsilon$. It follows from Theorem \ref{thm_int_P_II} that there exists $g \in SA(\G_d)$ such that $g(s^{(i)})=\lm_i\slash \lm$ for $1 \leq i \leq n$. If we define $g_*=\lm g$, then $\|g_*(\underline{S})\| \leq \lm \leq \|f\|_{\infty, U_0}$ for every $\underline{S} \in \mathfrak{M}_{\G_d}$ and so, $g_* \in SA_f(\G_d)$. Clearly,  $g_*(s^{(i)})=\lm_i$ for each $i$. Then $\rho \leq \|g_*\|_{\mathfrak{M}_{\G_d}} \leq \lambda$. Consequently, $\rho \leq \lambda$ for every $\lambda \in \Upsilon$ and hence $\rho=\inf \Upsilon$. Our next step is to prove the existence of a matrix $A \in K_\textbf{s}$ and a non-zero vector $y=(y_1, \dotsc, y_n)^t$ such that
		\begin{equation}\label{eqn_302}
			\overset{n}{\underset{i, j=1}{\sum}}(\rho^2-\lm_i\overline{\lm}_j)A(i, j)\overline{y}_iy_j=0.
		\end{equation}
	For a given $k \in K_{\textbf{s}}$, let $m(k)$ be the minimum eigenvalue of the matrix $[(\rho^2-\lm_i\overline{\lm}_j)k(i, j)]_{i, j=1}^n$ and let $\mu_*=\inf\{m(k): k \in K_\textbf{s}\}$. Suppose $\mu_*>0$. Set $M=\sup\{\|k\|: k \in K_\textbf{s}\}$, which is finite and $0<\epsilon< \mu_* \slash M$. Then for every $y \in \C^n$ and $k \in K_\textbf{s}$,
		\[
		\left\langle \left[(\rho^2-\lm_i\overline{\lm}_j)k(i, j)\right]_{i, j=1}^n y, y \right\rangle \geq m(k)\|y\|^2 \geq \mu_* \|y\|^2
		\ \ \text{and so,} \ \
		\left[(\rho^2-\epsilon-\lm_i\overline{\lm}_j)k(i, j)\right]_{i, j=1}^n \geq 0
		\]
		for every $k \in K_\textbf{s}$, contradicting the minimality of $\rho$. Hence, $\mu_*=0$ and the claim as in \eqref{eqn_302} follows from the compactness of $K_{\textbf{s}}$ and the continuity of the map $k \mapsto m(k)$. Let $A=[A(i, j)] \in K_\textbf{s}$ and $y$ be non-zero vector in $\C^n$ such that \eqref{eqn_302} holds. Since $A$ is invertible, the column space $\mathcal{K}_n=\text{span}\{A(., j) : 1\leq j \leq n \}$ is an $n$-dimensional space. Define $(S_1, \dotsc, S_{d-1}, P)$ on $\mathcal{K}_n$ as
		\[
		S_1^*A(., j)=\overline{s}_1^{(j)}A(., j), \quad \dotsc, \quad S_{d-1}^*A(., j)=\overline{s}_{d-1}^{(j)}A(., j) \quad \text{and} \quad P^*A(., j)=\overline{p}^{(j)}A(., j)
		\]
		for $1 \leq j \leq n$. Evidently, $(S_1^*, \dotsc, S_{d-1}^*, P^*)$ is a commuting tuple of operators with the joint spectrum $\sigma_T(S_1^*, \dotsc, S_{d-1}^*, P^*)=\{(\overline{s}_1^{(j)}, \dotsc, \overline{s}_{d-1}^{(j)}, \overline{p}^{(j)}) : 1\leq j \leq n\}$. Thus, $\underline{S}=(S_1, \dotsc, S_{d-1}, P)$ is subordinate to $\textbf{s}$. In fact, $\underline{S}$ and $\underline{S}^*=(S_1^*,\dotsc,  S_{d-1}^*, P^*)$ belong to $Q\G_d$. Also, $g(\underline{S})^*A(., j)=\overline{g(s^{(j)})}A(., j)=\overline{\lm}_jA(., j)$ for $1 \leq j \leq n$. Since $\left[(\rho^2-\lm_i\overline{\lm}_j)A(i, j)\right] \geq 0$, we have that $\|h(\underline{S})\|=\|h(\underline{S})^*\| \leq \rho$. The desired conclusion now follows directly from \eqref{eqn_302}.
	\end{proof}
	
	Finally, we present a proof to the extension theorem for $\G_d$.
	
	\medskip
	
	\noindent \textit{Proof of Theorem \ref{thm_ext_Gd}:} Let $U_0 \subseteq \G_d$, and let $f \in \mathscr{HE}(U_0)\setminus \{0\}$ such that 
	\begin{equation}\label{eqn_303}
		\|f(\underline{S})\| \leq \|f\|_{\infty, U_0} \quad \text{for every $\underline{S} \in Q\G_d$ subordinate to $U_0$}.	
	\end{equation}
	Let  $\{s^{(1)}, s^{(2)}, \dotsc\}$ be a dense countable subset of $U_0$. Denote by $\textbf{s}_n=\{s^{(1)}, \dotsc, s^{(n)}\}$ and  $\boldsymbol{\lambda}_n=(f(s^{(1)}), \dotsc, f(s^{(n)}))$. By Lemma \ref{lem_304}, there exists an extremal function $h_n \in SA_f(\G_d)$ that satisfies 
	\[
	\rho_f(\textbf{s}_n, \boldsymbol{\lambda}_n)=\|h_n\|_{\mathfrak{M}_{\G_d}}
	\] for each $n$. It follows from Lemma \ref{lem_305} that there is an operator tuple $\underline{S}_n=(S_1^{(n)}, \dotsc, S_{d-1}^{(n)}, P^{(n)})$ in $Q\G_d$ subordinate to $\textbf{s}_n$ such that $\|h_n(\underline{S}_n)\|=\rho_f(\textbf{s}_n, \boldsymbol{\lambda}_n)$. Therefore, 
	\begin{align*}
		\|h_n\|_{\mathfrak{M}_{\G_d}}
		=\rho_f(\textbf{s}_n, \boldsymbol{\lambda}_n)
		=\|h_n(\underline{S}_n)\|  
		=\|f(\underline{S}_n)\|
		\leq \|f\|_{\infty, U_0},
	\end{align*}
	where the last inequality follows since $\underline{S}_n$ is subordinate to $W$, $\textbf{s} \subseteq W$ and $f$ satisfies \eqref{eqn_303}.
	Since $\|h_n\|_{\infty, \G_d} \leq \|f\|_{\infty, U_0}$, the sequence $\{h_n\}$ is uniformly bounded. Therefore, there exists a subsequence $\{h_{n_\ell}\}$ of $\{h_n\}$ converging pointwise to a function $h \in H^\infty(\G_d)$. Evidently,  $f(s^{(i)})=h(s^{(i)})$ for $i \in \N$ and so, $f=g$ on $U_0$. Also, $\|f\|_{\infty, U_0}=\|h\|_{\infty, U_0} \leq \|h\|_{\infty, \G_d}$. For every $\underline{S}$ in $\mathfrak{M}_{\G_d}$, an application of dominated convergence theorem gives that $h_{n_\ell}(\underline{S})$ converges to $h(\underline{S})$ in the weak-operator topology. Thus, $\|h(\underline{S})\| \leq \|f\|_{\infty, U_0}$ and so, $\|h\|_{\infty, \G_d} \leq \|f\|_{\infty, U_0}$. \qed

\section{Function Theory on the Generalized Symmetrized Domain $\Theta_d$}\label{sec_04}	

\noindent The symmetrization map $\pi_d=(s_1, \dotsc, s_d): \C^d \to \C^d$ is defined by
\[
s_i(z_1, \dotsc, z_d)=\underset{1 \leq \ell_1<\dotsc <\ell_i  \leq d}{\sum}z_{\ell_1}\dotsc z_{\ell_i} \quad (1 \leq i \leq d).
\]	
As mentioned earlier, the symmetrized polydisc $\G_d$ is the image of the polydisc $\D^d$ under the symmetrization map. The authors of \cite{Biswas} introduced a family of domains that generalize the symmetrized polydisc. Fix $m \in \mathbb{N}$ and let $p$ be a divisor of $m$. The domain $\Theta_d$ is defined as
\[
\Theta_d=\left\{(\theta_1, \dotsc, \theta_d) \in \C^d: \theta_i=s_i(z_1^m, \dotsc, z_d^m), \dotsc,  \theta_d=(z_1 \dotsc z_d))^{m\slash p}, 1 \leq i \leq d,  z_1, \dotsc, z_d \in \D \right\}.
\]	
If $m=p=1$, then the corresponding domain $\Theta_d$ is precisely the symmetrized polydisc $\G_d$. Although the domain $\Theta_d$ depends on the parameters $m$ and $p$, we suppress this dependence in the notation for convenience.

\smallskip 

In this section, we present realization, interpolation, Toeplitz corona and 	extension theorems for the domain $\Theta_d$. Since the proofs follow essentially the same arguments as those for $\mathbb{G}_d$, we only provide the necessary constructions and state the corresponding results. To begin with, we recall from \cite{Keshari} some useful characterizations of the domain $\Theta_d$. For a point $\theta=(\theta_1, \dotsc, \theta_d) \in \C^d$, consider the polynomials $P_*(\theta), Q_*(\theta)$ and $R_*(\theta)$ given by 
\begin{align*}
P_*(\theta)(\alpha)
&=\alpha^d-\theta_1\alpha^{d-1}+\dotsc+(-1)^d\theta_d^p,
\\
Q_*(\theta)(\alpha)
&=\frac{d}{d\alpha}(\alpha^dP_*(\theta)(1\slash \alpha))
=d(-1)^d\theta_d^p\alpha^{d-1}+(d-1)(-1)^{d-1}\theta_{d-1}\alpha^{d-2}+\dotsc+(-\theta_1),  
\\
R_*(\theta)(\alpha)
&=\alpha^{d-1}P_*'(\theta)(1\slash \alpha)
=d-(d-1)\theta_1\alpha+\dotsc+(-1)^{d-1}\theta_{d-1}\alpha^{d-1},
\end{align*}
which are analogs of the polynomials defined in \eqref{eqn_QR}. Using $Q_*(\theta)$ and $R_*(\theta)$, we define 
\[
\mathrm J_*(\theta)(\alpha)=Q_*(\theta)(\al)\slash R_*(\theta)(\al).
\] 
For every $\alpha \in \DC$ and $\theta=(\theta_1, \dotsc, \theta_d) \in \C^d$, define a rational function as
\[
\Phi_{*, \alpha}(\theta_1, \dotsc, \theta_d)=\mathrm J_*(\theta)(\alpha)=\frac{d(-1)^d\theta_d^p\alpha^{d-1}+(d-1)(-1)^{d-1}\theta_{d-1}\alpha^{d-2}+\dotsc+(-\theta_1)}{d-(d-1)\theta_1\alpha+\dotsc+(-1)^{d-1}\theta_{d-1}\alpha^{d-1}}.
\]	
By Theorem 3.1 in \cite{Keshari}, $(\theta_1, \dotsc, \theta_d) \in \Theta_d$ if and only if  $\sup\{|\Phi_{*, \al}(\theta_1, \dotsc, \theta_d)|: \alpha \in \DC\}<1$. Following the proof of Theorem 3.1 in \cite{Keshari} and the arguments similar to that to Theorem \ref{prop_char}, one can easily prove that 
\[
(\theta_1, \dotsc, \theta_d) \in \Theta_d \ \text{if and only if}  \ |\Phi_{*, \al}(\theta_1, \dotsc, \theta_d)|<1 \ \text{for every $\alpha \in \DC$.}
\]
The functions $\Phi_{*,\alpha}$ and $\mathrm J_*(\theta)$ associated with $\Theta_d$ serve as analogs of the functions $\Phi_{\alpha}$ and $\mathrm J(s)$, respectively, for the domain $\G_d$.  Next, we define a class of commuting $d$-tuples of operators as 
\[
\mathfrak{M}_{\Theta_d}=\left\{\underline{T}=(T_1, \dotsc, T_d): \sigma_T(\underline{T}) \subseteq \Theta_d, \ \|\Phi_{*, \al}(\underline{T})\|<1 \ \text{for every} \ \al \in \DC\right\}.
\]
The class $\mathfrak{M}_{\Theta_d}$ plays the same role for $\Theta_d$ as $\mathfrak{M}_{\G_d}$ does for $\G_d$. In the same spirit for $\G_d$, we now define the Schur-Agler class and admissible kernels for the domain $\Theta_d$. 

\begin{defn}
	For Hilbert spaces $\LS, \LS'$, the \textit{$\mathcal{B}(\LS, \LS')$-valued Schur-Agler class} is defined as
	\[
	SA_{\Theta_d}(\LS, \LS')=\left\{f: \Theta_d \to \mathcal{B}(\LS, \LS'): \ \text{$f$ is holomorphic}, \ \|f(\underline{T})\| \leq 1 \ \text{for every $\underline{T} \in \mathfrak{M}_{\Theta_d}$}\right\}.
	\] 
We simply denote the class $SA(\Theta_d)(\C, \C)$ by $SA(\Theta_d)$. A $\mathcal{B}(\LS)$-valued kernel (or, weak kernel) $k$ on a subset of $\Theta_d$ is said to be \textit{admissible} if
\[
(\theta, \widetilde{\theta}) \mapsto \left(1-\Phi_{*, \al}(\theta)\overline{\Phi_{*, \al}(\widetilde \theta)}\right)k(\theta, \widetilde{\theta})
\]
is a positive semi-definite map for all $\alpha \in \DC$. For a subset $F$ of the domain $\Theta_d$, a map $\xi: F \times F \to \mathcal{B}(C(\DC), \mathcal{B}(\LS))$ is called \textit{completely positive} if for every $n \in \N, \{v_1, \dotsc, v_n\} \subset \LS, \{s^{(1)}, \dotsc, s^{(n)}\} \subset F$ and $\{h_1, \dotsc, h_n\} \subset C(\DC)$, we have
\[
\overset{n}{\underset{i, j=1}{\sum}}\la \xi(s^{(i)}, s^{(j)})(h_i\overline{h}_j)v_j, v_i\ra_\LS \geq 0.
\]	
\end{defn}	

Having introduced the necessary terminology and definitions, we now present the realization theorem for $\Theta_d$. The proof follows the same argument as that of Theorem \ref{thm_realization_Gd}, with minor modifications. We leave the details to the reader.

	\begin{thm}\label{thm_realization_Theta_d}
	Let $\LS_1$ and  $\LS_2$ be Hilbert spaces. For a function $f: \Theta_d \to \mathcal{B}(\LS_1, \LS_2)$, the following statements are equivalent: 
	\begin{enumerate}[leftmargin=*]
		\item[$(1)$] $f \in SA_{\Theta_d}(\LS_1, \LS_2)$;
		
		\item[$(2)$] $(\theta, \widetilde \theta) \mapsto (I_{\LS_2}-f(\theta)f(\widetilde \theta)^*)\oslash k(\theta, \widetilde \theta)$ is a positive semi-definite function for all $\mathcal{B}(\LS_2)$-valued admissible kernel $k$ on $\Theta_d$;
		
		\item[$(3)$] there exists a completely positive map  $\xi: \Theta_d \times \Theta_d \to \mathcal{B}(C(\DC), \mathcal{B}(\LS_2))$ such that
		\[
		I_{\LS_2}-f(\theta)f(\widetilde \theta)^*=\xi(\theta, \widetilde \theta)(1-\mathrm{J}_*(\theta)\overline{\mathrm{J}_*(\widetilde \theta)})
		\]
		for all $\theta, \widetilde \theta \in \Theta_d$;
		
		\item[$(4)$] there exist a Hilbert space $\HS$, a unital $*$-representation $\rho: C(\DC) \to \mathcal{B}(\HS)$ and a unitary $V=\begin{bmatrix} A & B \\ C & D \end{bmatrix}: \LS_1 \oplus \HS \to \LS_2 \oplus \HS$ such that for every $\theta \in \Theta_d$,
		\[
		f(\theta)=A+B\rho(\mathrm{J}_*(\theta))(I_\HS-D\rho(\mathrm{J}_*(\theta)))^{-1}C. 
		\]
	\end{enumerate}
\end{thm}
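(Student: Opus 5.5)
The plan is to transport the proof of Theorem \ref{thm_realization_Gd} verbatim, replacing $\Phi_\alpha$, $\mathrm J$, $\mathfrak M_{\G_d}$ by $\Phi_{*,\alpha}$, $\mathrm J_*$, $\mathfrak M_{\Theta_d}$, and checking only the points where the specific form of $\G_d$ was used.

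First I would collect the structural facts that the $\G_d$ proof relies on.
\begin{enumerate}
\item[(a)] $\|\mathrm J_*(\theta)\|_{\infty,\DC}<1$ for $\theta\in\Theta_d$, with $R_*(\theta)$ non-vanishing on $\DC$. This is the characterization from \cite{Keshari} recalled above. It makes $\Phi_{*,\alpha}(\underline T)$ well defined by the rational functional calculus whenever $\sigma_T(\underline T)\subseteq\Theta_d$.
\item[(b)] $\Theta_d$ is quasi-balanced. If $\theta=(s_1(z^m),\dots,s_{d-1}(z^m),(z_1\cdots z_d)^{m/p})$, replace $z_j$ by $r^{1/m}z_j$. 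This shows $(r\theta_1,\dots,r^{d-1}\theta_{d-1},r^{d/p}\theta_d)\in\Theta_d$ for $0<r<1$. Hence $\mathfrak M_{\Theta_d}$ is stable under these dilations, and $f_r(\theta)=f(r\cdot\theta)$ is holomorphic on a neighbourhood of $\overline{\Theta_d}$.
\item[(c)] $\overline{\Theta_d}$ is polynomially convex. It is the image of $\overline{\D}^d$ under a proper polynomial map, so it is polynomially convex by Rudin's theorem, exactly as for $\Gamma_d$. This supplies the polynomial approximation used in $(1)\Rightarrow(2)$.
\end{enumerate}

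With these facts in hand, Proposition \ref{prop_204} and Lemma \ref{lem:decomposition} carry over unchanged. Their proofs use only that $F$ is a set and that $\sup_\alpha|\Phi_{*,\alpha}(\theta)|<1$ on finite sets, which is what gives the $\varepsilon$ bound in the weak-$*$ closedness argument. Lemma \ref{lem_prelim_III_P} and the Tychonoff/simple-representation approximation in $(4)\Rightarrow(1)$ are likewise purely formal. The algebraic steps $(2)\Rightarrow(3)$ and $(3)\Rightarrow(4)$, namely the lurking isometry, need no change at all.

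The main obstacle is the analogue of Proposition \ref{prop_Ms}: the coordinate multipliers $M_{\theta_1},\dots,M_{\theta_d}$ on $\mathcal H(k)$ must be bounded for an admissible kernel $k$. I would again consider the holomorphic map $F(\alpha)=M_{\Phi_{*,\alpha}}$ and differentiate the identity $\Phi_{*,\alpha}R_*(\theta)(\alpha)=Q_*(\theta)(\alpha)$ at $\alpha=0$. This recursively gives $M_{\theta_1}=-dF(0)$, and then $M_{\theta_2},\dots,M_{\theta_{d-1}}$ as noncommutative polynomials in $F^{(j)}(0)$ and previously obtained multipliers. The top coefficient, however, only yields boundedness of $M_{\theta_d^p}$ and not of $M_{\theta_d}$ itself.

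To recover $M_{\theta_d}$ from $M_{\theta_d^p}$, I would try the following. Since the kernel functions are eigenvectors, $\theta_d^p$ is a bounded multiplier. On $\Theta_d$ we have $|\theta_d|<1$, and $k\cdot(1-\theta_d^p\overline{\widetilde\theta_d^p})$ is positive, being one of the Schwarz-type consequences of admissibility: it is a limit of the $\Phi_{*,\alpha}$ data, obtained as $\alpha\to$ the boundary with a suitable normalization. If that route fails, I would fall back on an alternative. In the applications the full tuple $(M_{\theta_1},\dots,M_{\theta_d})$ is only needed on the finite-dimensional spaces $\mathcal H_m(k)$, where the adjoints act diagonally on kernel functions and are automatically bounded. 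Moreover, $\|\Phi_{*,\alpha}\|\le1$ there follows from admissibility restricted to $F_m$. So I would run the $(1)\Rightarrow(2)$ argument directly on $\mathcal H_m(k)$, bypassing boundedness on all of $\mathcal H(k)$.

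With this in place, $(1)\Rightarrow(2)$ proceeds as before. First apply it to $\widehat f_r(\theta)=f_r(\overline\theta)^*$ at $r\cdot\underline S^*\in\mathfrak M_{\Theta_d}$. Then read off positivity of $(I-f_r(\theta)f_r(\widetilde\theta)^*)\oslash k$ on finite sets, and finally let $r\uparrow1$.
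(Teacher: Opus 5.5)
Your transport of Proposition \ref{prop_204}, Lemma \ref{lem:decomposition}, Lemma \ref{lem_prelim_III_P}, facts (a)--(c), and the implications $(2)\Rightarrow(3)\Rightarrow(4)\Rightarrow(1)$ is exactly what the paper intends. Its proof of this theorem is only the remark that the $\G_d$ argument carries over with minor modifications. You have also found the one place where the transport is not verbatim. However, neither of your patches for the analogue of Proposition \ref{prop_Ms} works, and the obstruction is structural.

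Put $\tau(\theta)=(\theta_1,\dots,\theta_{d-1},\theta_d^p)$. Then $\tau(\Theta_d)=\G_d$ and $\Phi_{*,\alpha}=\Phi_\alpha\circ\tau$, so admissibility never sees $\theta_d$ itself. Take any admissible kernel $K$ on $\G_d$. An example is the kernel with $K(\pi_d(z),\pi_d(w))=\sum_{\sigma}\prod_{j}(1-z_j\overline{w_{\sigma(j)}})^{-1}$, $\sigma$ ranging over permutations of $\{1,\dots,d\}$; this is the kernel of the symmetric part of $H^2(\D^d)$, on which $\Phi_\alpha\circ\pi_d$ acts contractively. Then $k=K\circ(\tau\times\tau)$ is an admissible kernel on $\Theta_d$, and every element of $\HS(k)$ is constant on the fibres of $\tau$.

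For $p>1$ a fibre contains $\theta$ and $\theta'=(\theta_1,\dots,\theta_{d-1},\omega\theta_d)$ with $\omega^p=1\neq\omega$ and $\theta_d\neq0$. Both points lie in $\Theta_d$: multiply $z_1$ by an $m$-th root of unity $\zeta$ with $\zeta^{m/p}=\omega$. Hence $\theta_d\,k(\cdot,\theta)\notin\HS(k)$, since it takes the different values $\theta_d k(\theta,\theta)$ and $\omega\theta_d k(\theta,\theta)$ at $\theta$ and $\theta'$. So $M_{\theta_d}$ is not even defined on $\HS(k)$.

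Your first route at best reproves $\|M_{\theta_d^p}\|\le 1$, so it cannot work. The fallback fails too. If $\theta,\theta'\in F_m$, then $k(\cdot,\theta)=k(\cdot,\theta')$, so the prescription $k(\cdot,\theta^{(\ell)})u\mapsto\overline{\theta^{(\ell)}_d}\,k(\cdot,\theta^{(\ell)})u$ is not a well-defined operator on $\HS_m(k)$. Diagonal action on kernel functions is automatic only for functions that are already multipliers of $\HS(k)$.

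The missing idea has two parts: first show that every $f\in SA_{\Theta_d}(\LS_1,\LS_2)$ is constant on the fibres of $\tau$, then build the last operator out of the bounded ones.

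For the first part, with $\theta,\theta'$ as above, let $\underline T_N=(\theta_1I,\dots,\theta_{d-1}I,T_N)$ on $\C^2$ with $T_N=\begin{bmatrix}\theta_d&N\\0&\omega\theta_d\end{bmatrix}$. Then $T_N^p=\theta_d^pI$, so $\Phi_{*,\alpha}(\underline T_N)=\Phi_{*,\alpha}(\theta)I$ and $\underline T_N\in\mathfrak M_{\Theta_d}$ for every $N$. The off-diagonal entry of $f(\underline T_N)$ is $N(f(\theta')-f(\theta))/((\omega-1)\theta_d)$, so $f(\theta)=f(\theta')$.

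For the second part, your differentiation argument does show that $M_{\theta_1},\dots,M_{\theta_{d-1}},M_{\theta_d^p}$ are bounded. Restrict their adjoints $\underline S^*=(S_1^*,\dots,S_{d-1}^*,P^*)$ to $\HS_m(k)$. As last entry use $q(\underline S^*)$, where $q$ is a polynomial with $q(\overline{\tau(\theta^{(\ell)})})=\overline{\theta^{(\ell_0)}_d}$ for one chosen representative $\theta^{(\ell_0)}$ of each fibre. This operator is bounded, commutes with $S_1^*,\dots,S_{d-1}^*$, and satisfies $q(\underline S^*)^p=P^*$. Hence the dilated tuple lies in $\mathfrak M_{\Theta_d}$. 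By fibre-constancy, $\widehat f$ of this tuple still sends $k(\cdot,\theta^{(\ell)})u\otimes v$ to $k(\cdot,\theta^{(\ell)})u\otimes f(r\cdot\theta^{(\ell)})^*v$ for every $\ell$. The rest of the $\G_d$ argument then goes through.
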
	

We next turn to the interpolation theorem for $\Theta_d$. We skip its proof as it follows from the corresponding result for $\G_d$ (see Theorem \ref{thm_interpolation_Gd}) with necessary modifications. 

\begin{thm}\label{thm_interpolation_Theta_d}
	Assume that $\LS_1, \LS_2$ are Hilbert spaces. Let $F=\{\theta^{(1)}, \dotsc, \theta^{(n)}\} \subseteq \Theta_d$, and let $A_1, \dotsc, A_n \in \mathcal{B}(\LS_1, \LS_2)$. Then the following are equivalent:
	\begin{enumerate}[leftmargin=*]
		\item[$(1)$] there exists a function $f \in SA_{\Theta_d}(\LS_1, \LS_2)$ such that $f(\theta^{(i)})=A_i$ for $1 \leq i \leq n$;
		\item[$(2)$] $\begin{bmatrix} (I_{\LS_2}-A_iA_j^*)\otimes k(\theta^{(i)}, \theta^{(j)})\end{bmatrix}_{i, j=1}^n \geq 0$ for every $\mathcal{B}(\LS_2)$-valued kernel $k$ on $\Theta_d$;
		\item[$(3)$] there exists a completely positive map  $\xi: \Theta_d \times \Theta_d \to \mathcal{B}(C(\DC), \mathcal{B}(\LS_2))$ such that
		\[
		I_{\LS_2}-A_iA_j^*=\xi(\theta^{(i)}, \theta^{(j)})(1-\mathrm{J}_*(\theta^{(i)})\overline{\mathrm{J}_*(\theta^{(j)})})
		\]
		for $1 \leq i, j \leq n$.
	\end{enumerate}
\end{thm}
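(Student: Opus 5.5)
The plan is to transfer the proof of Theorem \ref{thm_interpolation_Gd} to $\Theta_d$, replacing $\Phi_\alpha$, $\mathrm{J}$ and $\mathfrak{M}_{\G_d}$ by $\Phi_{*,\alpha}$, $\mathrm{J}_*$ and $\mathfrak{M}_{\Theta_d}$, and using Theorem \ref{thm_realization_Theta_d} wherever the $\G_d$ argument used Theorem \ref{thm_realization_Gd}. As in the $\G_d$ case, condition $(2)$ is to be read with $k$ ranging over $\mathcal{B}(\LS_2)$-valued \emph{admissible} kernels on $\Theta_d$.

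For $(1)\Rightarrow(2)$, take $f\in SA_{\Theta_d}(\LS_1,\LS_2)$ with $f(\theta^{(i)})=A_i$. The implication $(1)\Rightarrow(2)$ of Theorem \ref{thm_realization_Theta_d} gives positivity of $(I_{\LS_2}-f(\theta)f(\widetilde\theta)^*)\oslash k(\theta,\widetilde\theta)$ on all of $\Theta_d$. Restricting to the finite set $F$ then gives the matrix inequality.

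For $(2)\Rightarrow(3)$, apply the $\Theta_d$-analogue of Lemma \ref{lem:decomposition} to the self-adjoint map $g(\theta^{(i)},\theta^{(j)})=I_{\LS_2}-A_iA_j^*$ on $F\times F$. That analogue comes from the same wedge and Hahn--Banach argument. The only geometric inputs are that $\sup_{\alpha}|\Phi_{*,\alpha}(\theta)|<1$ on $\Theta_d$, which is the Keshari characterization recalled above and gives the uniform $\varepsilon$ in the weak-$*$ closedness step, and that the kernels $1/(1-\Phi_{*,\alpha}(\theta)\overline{\Phi_{*,\alpha}(\widetilde\theta)})$ are weak kernels, being pullbacks of the Szeg\H{o} kernel. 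The resulting completely positive map is defined on $F\times F$; extend it by zero off $F\times F$, which keeps complete positivity, to obtain $\xi$ on $\Theta_d\times\Theta_d$.

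For $(3)\Rightarrow(1)$, use the lurking isometry argument verbatim. Proposition \ref{prop_204} is purely abstract and so holds for $F\subseteq\Theta_d$. It yields $\HS$, $L$ and $\rho$ with
\[
I_{\LS_2}+L(\theta^{(i)})(1)\rho(\mathrm{J}_*(\theta^{(i)}))\rho(\mathrm{J}_*(\theta^{(j)}))^*(L(\theta^{(j)})(1))^*=A_iA_j^*+L(\theta^{(i)})(1)(L(\theta^{(j)})(1))^*.
\]
The map $V$ sending $\bigl[u;\ \rho(\mathrm{J}_*(\theta^{(i)}))^*(L(\theta^{(i)})1)^*u\bigr]$ to $\bigl[A_i^*u;\ (L(\theta^{(i)})1)^*u\bigr]$ is then an isometry. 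We extend it to a unitary after enlarging $\HS$ by an infinite-dimensional summand, on which $\rho$ acts by evaluation at some point so that it remains a unital $*$-representation and the defect spaces match.

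Write $V=\begin{bmatrix}A&B\\C&D\end{bmatrix}$ and set $f(\theta)=A^*+C^*\rho(\mathrm{J}_*(\theta))(I-D^*\rho(\mathrm{J}_*(\theta)))^{-1}B^*$. Since $\|\rho(\mathrm{J}_*(\theta))\|\le\|\mathrm{J}_*(\theta)\|_{\infty,\DC}<1$, the resolvent exists. The implication $(4)\Rightarrow(1)$ of Theorem \ref{thm_realization_Theta_d} puts $f$ in $SA_{\Theta_d}(\LS_1,\LS_2)$. Solving the second row of $V$ for $(L(\theta^{(i)})1)^*u$ and substituting into the first row gives $f(\theta^{(i)})^*=A_i^*$.

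The main obstacle is checking that the $\Theta_d$-analogues of the auxiliary results used inside Theorem \ref{thm_realization_Theta_d} genuinely go through, since the interpolation step relies on that theorem. In particular, in the $\Theta_d$ version of Proposition \ref{prop_Ms}, differentiating $\Phi_{*,\alpha}R_*=Q_*$ at $\alpha=0$ recovers $M_{\theta_1},\dots,M_{\theta_{d-1}}$ and $M_{\theta_d^p}$, but not $M_{\theta_d}$ directly. I would first try to get $M_{\theta_d}$ by a functional-calculus or root argument. If that fails, I would note that it is only needed for the $(1)\Rightarrow(2)$ step, which here is applied merely to finitely many kernel functions, and for these the adjoint tuple is defined directly on the finite span.
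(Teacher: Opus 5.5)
Your proof follows the route the paper intends: it omits the argument and points to Theorem \ref{thm_interpolation_Gd}. You get $(1)\Rightarrow(2)$ from the realization theorem, $(2)\Rightarrow(3)$ from the decomposition lemma on $F\times F$, and $(3)\Rightarrow(1)$ by the lurking isometry with $f$ read off from $V^*$. Your added details are correct: reading $(2)$ with admissible kernels, extending $\xi$ by zero off $F\times F$, and enlarging $\HS$ (with $\rho$ acting by a point evaluation on the new summand) so the isometry extends to a unitary. With Theorem \ref{thm_realization_Theta_d} taken as given, the argument is complete.

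Your closing worry about $M_{\theta_d}$ is justified, and the paper's ``minor modifications'' do not address it. However, neither of your two remedies works as stated.

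Let $\pi(\theta)=(\theta_1,\dots,\theta_{d-1},\theta_d^p)$, and set $k(\theta,\widetilde\theta)=1$ if $\pi(\theta)=\pi(\widetilde\theta)$ and $k(\theta,\widetilde\theta)=0$ otherwise. This is a kernel on $\Theta_d$. It is admissible because $\Phi_{*,\alpha}$ depends on $\theta$ only through $\pi(\theta)$, so on the support of $k$ the factor $1-\Phi_{*,\alpha}(\theta)\overline{\Phi_{*,\alpha}(\widetilde\theta)}$ equals the positive constant $1-|\Phi_{*,\alpha}(\theta)|^2$.

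Now take $p\geq 2$ and $\theta\in\Theta_d$ with $\theta_d\neq 0$. Then $\theta'=(\theta_1,\dots,\theta_{d-1},e^{2\pi i/p}\theta_d)$ is another point of $\Theta_d$ (replace $z_1$ by $e^{2\pi i/m}z_1$), and $k(\cdot,\theta)=k(\cdot,\theta')$. Every element of $\mathcal H(k)$ therefore takes equal values at $\theta$ and $\theta'$, so $\theta_d\,k(\cdot,\theta)\notin\mathcal H(k)$. Consequently:
\begin{itemize}
\item multiplication by $\theta_d$ is not even defined on $\mathcal H(k)$;
\item the $\Theta_d$-analogue of Proposition \ref{prop_Ms} is false, so no root or functional-calculus argument can produce $M_{\theta_d}$;
\item your fallback of defining $S_d^*$ diagonally on $\operatorname{span}\{k(\cdot,\theta^{(\ell)})u\}$ is inconsistent whenever $F$ contains such a pair, since one vector would be assigned the two eigenvalues $\overline{\theta_d}$ and $e^{-2\pi i/p}\overline{\theta_d}$.
\end{itemize}

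The missing step is to first replace $k$ by $k_\varepsilon=k+\varepsilon\,\delta\,I_{\LS_2}$. This is still admissible, and its Gram operator on $F$ is bounded below by $\varepsilon$. Hence the blocks $\{k_\varepsilon(\cdot,\theta^{(\ell)})u:u\in\LS_2\}$ form a topological direct sum. The diagonal tuple $\underline S^*$ is then a well-defined bounded commuting tuple with joint spectrum $\{\overline{\theta^{(\ell)}}\}$, and admissibility of $k_\varepsilon$ gives $\|\Phi_{*,\alpha}(\underline S^*)\|\le 1$ for every $\alpha\in\DC$.

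Next, with $r\cdot\theta=(r\theta_1,\dots,r^{d-1}\theta_{d-1},r^{d/p}\theta_d)$ one has $\Phi_{*,\alpha}(r\cdot\theta)=r\,\Phi_{*,r\alpha}(\theta)$, so $r\cdot\underline S^*\in\mathfrak M_{\Theta_d}$ for $0<r<1$. Apply $\widehat f$ as in the paper, let $r\uparrow 1$, and then let $\varepsilon\downarrow 0$. This yields $(1)\Rightarrow(2)$ without using Proposition \ref{prop_Ms} at all.
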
	

We proceed to the Toeplitz corona theorem for $\Theta_d$. As in the case of $\G_d$, the result follows by applying the same method with appropriate modifications. We omit the proof and refer the reader to the corresponding arguments from Theorem \ref{thm_TC_Gd}.	
	
\begin{thm}\label{thm_TC_Theta_d}
	Let $\LS_1, \LS_2$ and $\LS_3$ be Hilbert spaces. For functions $\zeta^{(12)}: \Theta_d \to \mathcal{B}(\LS_1, \LS_2)$ and $\zeta^{(32)}: \Theta_d \to \mathcal{B}(\LS_3, \LS_2)$, the following are equivalent: 
	\begin{enumerate}[leftmargin=*]
		\item[$(1)$] there exists $\zeta^{(31)} \in SA_{\Theta_d}(\LS_3, \LS_1)$ such that $\zeta^{(12)}(\theta)\zeta^{(31)}(\theta)=\zeta^{(32)}(\theta)$ for every $\theta \in \Theta_d$;
		\item[$(2)$] for every finite subset $F=\{\theta^{(1)}, \dotsc, \theta^{(n)}\} \subset \Theta_d$,
		\[
		\begin{bmatrix}
			\left(\zeta^{(12)}(\theta^{(i)})\zeta^{(12)}(\theta^{(j)})^*-\zeta^{(32)}(\theta^{(i)})\zeta^{(32)}(\theta^{(j)})^*\right)\otimes k(\theta^{(i)}, \theta^{(j)})
		\end{bmatrix}_{i, j=1}^n \geq 0
		\]
		for every $\mathcal{B}(\LS_2)$-valued admissible kernel $k$ on $\Theta_d$;
		\item[$(3)$] there exist a completely positive map $\xi: \Theta_d \times \Theta_d \to \mathcal{B}(C(\DC), \mathcal{B}(\LS_2))$ such that 
		\begin{align*}
			\zeta^{(12)}(\theta)\zeta^{(12)}(\widetilde \theta)^*-\zeta^{(32)}(\theta)\zeta^{(32)}(\widetilde \theta)^*
			=\xi(\theta, \widetilde{\theta})(1-\mathrm{J}_*(\theta)\overline{\mathrm{J}_*(\widetilde \theta)})
		\end{align*}
		for all $\theta, \widetilde \theta \in \Theta_d$.	 	
	\end{enumerate}
\end{thm}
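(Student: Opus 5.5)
The plan is to transcribe the proof of Theorem \ref{thm_TC_Gd} to $\Theta_d$, replacing $\Phi_\alpha$, $\mathrm J$ and $\mathfrak{M}_{\G_d}$ by $\Phi_{*,\alpha}$, $\mathrm J_*$ and $\mathfrak{M}_{\Theta_d}$. First I would record the $\Theta_d$-analogues of the auxiliary results. Proposition \ref{prop_204} is purely abstract and holds verbatim for subsets of $\Theta_d$. Lemma \ref{lem:decomposition} carries over with $\mathrm J$ replaced by $\mathrm J_*$. The only domain-specific input in its wedge-closedness argument is that $\sup_{\alpha}|\Phi_{*,\alpha}(\theta)|<1$ at each point, and this is given by the characterization of $\Theta_d$ from \cite{Keshari}.

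For $(1)\Rightarrow(2)$, I fix a $\mathcal B(\LS_2)$-valued admissible kernel $k$ on $\Theta_d$ and a finite set $F$. I form $\HS(k)$ and the finite-dimensional subspace $\HS_n(k)$ spanned by $k(\cdot,\theta^{(\ell)})u$, and compress the adjoints of the coordinate multipliers to it. The main obstacle is the $\Theta_d$ analogue of Proposition \ref{prop_Ms}: one must show that $M_{\theta_1},\dots,M_{\theta_d}$ are bounded.

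Differentiating $\Phi_{*,\alpha}R_*(\theta)(\alpha)=Q_*(\theta)(\alpha)$ at $\alpha=0$ recursively recovers $M_{\theta_1},\dots,M_{\theta_{d-1}}$ and $M_{\theta_d^p}$, but not $M_{\theta_d}$ itself when $p>1$. There are two ways around this.
\begin{enumerate}
\item Bypass boundedness on all of $\HS(k)$. On $\HS_n(k)$ the operators $M_{\theta_i}^*$ act diagonally on kernel functions, with eigenvalues $\overline{\theta^{(\ell)}_i}$, so they are defined without any global boundedness. It then suffices to know $\|\Phi_{*,\alpha}(\underline S^*)\|\le 1$ on $\HS_n(k)$. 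This follows from admissibility: $\Phi_{*,\alpha}(\underline S^*)$ is the restriction of $M_{\Phi_{*,\alpha}}^*$, which is a contraction.
\item If the first route causes trouble, use Proposition \ref{prop_Ms} as stated in the paper, which covers only $\G_d$, for the functions $\theta_1,\dots,\theta_{d-1},\theta_d^p$. I would then note that only finite-dimensional compressions are ever needed.
\end{enumerate}

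Next I rescale with the quasi-balanced action $r\cdot\theta=(r^m\theta_1,\dots,r^{m(d-1)}\theta_{d-1},r^{m/p}\cdot r^{\cdot}\theta_d)$, choosing weights so that $\theta_d^p$ scales like $r^{md}$. This puts the compressed tuple in $\mathfrak M_{\Theta_d}$. I then apply $\widehat{\zeta^{(31)}}(s)=\zeta^{(31)}(\bar s)^*$ to the tuple $r\cdot\underline S^*$. The same inner-product computation as in Theorem \ref{thm_realization_Gd} gives positivity of
$$\Bigl[\bigl(\zeta^{(12)}\zeta^{(12)*}-\zeta^{(12)}\zeta^{(31)}_r\zeta^{(31)*}_r\zeta^{(12)*}\bigr)\otimes k\Bigr].$$
Here the left factor $\zeta^{(12)}(\theta^{(i)})$ is absorbed into the test vectors $v_i$. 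Letting $r\uparrow1$ and using $\zeta^{(12)}\zeta^{(31)}=\zeta^{(32)}$ yields $(2)$.

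For $(2)\Rightarrow(3)$, I apply the $\Theta_d$-version of Lemma \ref{lem:decomposition} to
$$g(\theta,\widetilde\theta)=\zeta^{(12)}(\theta)\zeta^{(12)}(\widetilde\theta)^*-\zeta^{(32)}(\theta)\zeta^{(32)}(\widetilde\theta)^*.$$
Since $(2)$ is assumed only for kernels, I first extend it to admissible weak kernels via $k+\varepsilon\delta I$, as in Theorem \ref{thm_realization_Gd}.

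For $(3)\Rightarrow(1)$, Proposition \ref{prop_204} gives $L$ and $\rho$, and hence the identity
$$\zeta^{(12)}\zeta^{(12)*}+L(\theta)(1)\rho(\mathrm J_*(\theta))\rho(\mathrm J_*(\widetilde\theta))^*L(\widetilde\theta)(1)^*=\zeta^{(32)}\zeta^{(32)*}+L(\theta)(1)L(\widetilde\theta)(1)^*.$$
This is a lurking isometry from the span of $(\zeta^{(12)}(\theta)^*u,\ \rho(\mathrm J_*(\theta))^*L(\theta)(1)^*u)$ to the span of $(\zeta^{(32)}(\theta)^*u,\ L(\theta)(1)^*u)$. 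I extend it to a unitary $V=\begin{bmatrix}A&B\\C&D\end{bmatrix}$ (enlarging $\KS$ if needed) and define
$$\zeta^{(31)}(\theta)^*=A+B\rho(\mathrm J_*(\theta))^*\bigl(I-D\rho(\mathrm J_*(\theta))^*\bigr)^{-1}C.$$
The inverse exists because $\|\rho(\mathrm J_*(\theta))\|\le\|\mathrm J_*(\theta)\|_{\infty,\DC}<1$. The implication $(4)\Rightarrow(1)$ of Theorem \ref{thm_realization_Theta_d} places $\zeta^{(31)}$ (via $V^*$) in $SA_{\Theta_d}(\LS_3,\LS_1)$. Finally, the second row of the isometry relation gives $\zeta^{(31)}(\theta)^*\zeta^{(12)}(\theta)^*=\zeta^{(32)}(\theta)^*$, which is $(1)$.
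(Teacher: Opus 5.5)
Your route is the one the paper intends: it omits the proof and defers to Theorem \ref{thm_TC_Gd}. Your steps $(2)\Rightarrow(3)$ and $(3)\Rightarrow(1)$ go through as you describe. You are also right that Proposition \ref{prop_Ms} does not transfer, a point the paper's ``minor modifications'' pass over. Writing $\tau(\theta)=(\theta_1,\dots,\theta_{d-1},\theta_d^p)$, one has $\Phi_{*,\alpha}=\Phi_\alpha\circ\tau$, so admissibility only controls functions of $\tau(\theta)$. However, neither of your workarounds closes the resulting gap in $(1)\Rightarrow(2)$.

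Route (1) presumes that $S_d^*k(\cdot,\theta^{(\ell)})u=\overline{\theta^{(\ell)}_d}\,k(\cdot,\theta^{(\ell)})u$ defines an operator on $\HS_n(k)$, and it need not. Let $k(\theta,\widetilde\theta)=1$ if $\tau(\theta)=\tau(\widetilde\theta)$ and $0$ otherwise. This is an admissible kernel on $\Theta_d$: it is the pullback under $\tau$ of the Kronecker kernel on $\G_d$, which is admissible, and $\Phi_{*,\alpha}=\Phi_\alpha\circ\tau$. Take $\theta\in\Theta_d$ with $\theta_d\neq0$ and set $\theta'=(\theta_1,\dots,\theta_{d-1},e^{2\pi i/p}\theta_d)$, which lies in $\Theta_d$ (replace $z_1$ by $e^{2\pi i/m}z_1$). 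Then $\|k(\cdot,\theta)-k(\cdot,\theta')\|^2=0$, so a single vector would need two different eigenvalues for $S_d^*$. Route (2) produces only a $\G_d$-tuple $(S_1,\dots,S_{d-1},P)$, with $P$ standing for $\theta_d^p$. The bound $\|\zeta^{(31)}(\underline T)\|\le1$ is available only on $\Theta_d$-tuples, so you still need some $S_d$ with $S_d^p=P$. The resulting calculus must then reproduce $\zeta^{(31)}$ at every point of $F$, not just at one point per fibre.

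This is not a technicality. Take the kernel above, $F=\{\theta,\theta'\}$, scalar $\zeta^{(12)}\equiv1$ and $\zeta^{(32)}=\zeta^{(31)}$, and put $a=\zeta^{(31)}(\theta)$, $b=\zeta^{(31)}(\theta')$. The matrix in $(2)$ is then $\begin{bmatrix}1-|a|^2&1-a\bar b\\1-b\bar a&1-|b|^2\end{bmatrix}$, whose determinant is $-|a-b|^2$. So any proof of $(1)\Rightarrow(2)$ must show that functions in $SA_{\Theta_d}$ are constant on $\tau$-fibres, and your proposal never does.

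The missing lemma is: if $f\in SA_{\Theta_d}(\LS,\LS')$ and $\omega^p=1$, then $f(\theta_1,\dots,\theta_{d-1},\omega\theta_d)=f(\theta)$. For $\theta_d\neq0$ and $\omega\neq1$, test $f$ on $\underline T_c=\bigl(\theta_1I_2,\dots,\theta_{d-1}I_2,\begin{bmatrix}\theta_d&c\\0&\omega\theta_d\end{bmatrix}\bigr)$.
\begin{itemize}
\item The last entry is diagonalizable with eigenvalues of equal $p$-th power, so its $p$-th power is $\theta_d^pI_2$.
\item Hence $\Phi_{*,\alpha}(\underline T_c)=\Phi_{*,\alpha}(\theta)I_2$. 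The Taylor spectrum is $\{\theta,\theta'\}\subset\Theta_d$, so $\underline T_c\in\mathfrak M_{\Theta_d}$ for every $c\in\C$.
\item The off-diagonal block of $f(\underline T_c)$ is $c\,(f(\theta')-f(\theta))/((\omega-1)\theta_d)$. Boundedness in $c$ forces $f(\theta')=f(\theta)$.
\end{itemize}

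With this lemma, $(1)\Rightarrow(2)$ can be repaired as follows.
\begin{itemize}
\item Let $V_t$ be the span of $\{k(\cdot,\theta^{(\ell)})u:\tau(\theta^{(\ell)})=t\}$. These are joint eigenspaces of the restrictions to $\HS_n(k)$ of $M^*_{\theta_1},\dots,M^*_{\theta_{d-1}},M^*_{\theta_d^p}$, which are bounded by the argument of Proposition \ref{prop_Ms}. Hence $\HS_n(k)$ is their direct sum.
\item Choose one representative $\ell_t$ in each fibre, and let $S_d^*$ act on $V_t$ as multiplication by $\overline{\theta^{(\ell_t)}_d}$.
\item Then $\underline S^*$ is a commuting tuple and $\tau(\underline S^*)$ is the compressed $\G_d$-tuple. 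So $r\cdot\underline S^*\in\mathfrak M_{\Theta_d}$, using the weights $(m,2m,\dots,(d-1)m,dm/p)$; this is what your formula for $r\cdot\theta$ should read.
\item By fibre invariance, $\widehat{\zeta^{(31)}}(r\cdot\underline S^*)$ sends $k(\cdot,\theta^{(\ell)})u\otimes v$ to $k(\cdot,\theta^{(\ell)})u\otimes\zeta^{(31)}(r\cdot\theta^{(\ell)})^*v$ for every $\ell$.
\end{itemize}
After this, your inner-product computation goes through.
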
		
	
We now study the following extension problem on $\Theta_d$: given a subset $W \subseteq \Theta_d$, characterize functions $f \in \mathscr{HE}(W)$ that admits a norm-preserving extension to a function $g$ in $H^\infty(\Theta_d)$ such that $g\slash \|f\|_{\infty, W} \in SA(\Theta_d)$. Let $Q\Theta_d$ be the class of all commuting $d$-tuples $\underline{T}=(T_1, \dotsc, T_d)$ of operators with $\sigma_T(\underline{T}) \subseteq \Theta_d$ such that
\[
\|\Phi_{*, \al}(T_1, \dotsc, T_d)\| \leq 1
\]
for all $\al \in \DC$. For a subset $W$ of $\Theta_d$, we say that a commuting $d$-tuple $\underline{T}$ is \textit{subordinate} to $W$ if $\sigma_T(\underline{S}) \subset W$ and $g(\underline{S})=0$ whenever $g$ is holomorphic in a neighbourhood of $W$ and $g|_W=0$. If $f$ is a function on $W$ that admits a holomorphic extension in a neighbourhood of $W$ and $\underline{T}$ is subordinate to $W$, then define $f(\underline{T})$ by setting $f(\underline{T})=g(\underline{T})$, where $g$ is any holomorphic extension of $f$ in a neighbourhood of $W$. With these definitions in place, we present the extension theorem for $\Theta_d$. The proof follows along the same lines as the corresponding result for $\G_d$ (see Theorem \ref{thm_ext_Gd})  with only minor modifications.

\begin{thm}\label{thm_ext_Theta_d}
	Let $W \subseteq \Theta_d$, and let $f \in \mathscr{HE}(W)$ be non-zero. Then there exists $g \in H^\infty(\Theta_d)$ such that $\displaystyle (1\slash \|f\|_{\infty, W}) g \in SA(\Theta_d), g|_W=f$ and $\|g\|_{\infty, \Theta_d}=\|f\|_{\infty, W}$ if and only if $\|f(\underline{T})\| \leq \|f\|_{\infty, W}$ for every $\underline{T} \in Q\Theta_d$ subordinate to $W$.
\end{thm}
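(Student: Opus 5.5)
The proof transports the argument for Theorem \ref{thm_ext_Gd} to $\Theta_d$, replacing $\Phi_\alpha$, $\mathrm J$, $\mathfrak{M}_{\G_d}$, $Q\G_d$ by $\Phi_{*,\alpha}$, $\mathrm J_*$, $\mathfrak{M}_{\Theta_d}$, $Q\Theta_d$ throughout.

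\emph{Necessity.} Suppose $g$ is a norm-preserving extension of $f$ with $g/\|f\|_{\infty,W}\in SA(\Theta_d)$, and let $\underline T\in Q\Theta_d$ be subordinate to $W$.
First I need a dilation structure on $\Theta_d$. Since $\theta$ enters $\Phi_{*,\alpha}$ only through $\theta_1,\dots,\theta_{d-1},\theta_d^p$, replacing $z$ by $rz$ in the defining parametrization shows that $\Theta_d$ is invariant under
\[
r\cdot\theta=(r^m\theta_1,\dots,r^{m(d-1)}\theta_{d-1},r^{md/p}\theta_d),
\]
with the last coordinate chosen so that $\theta_d^p$ scales by $r^{md}$. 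Under this map $\Phi_{*,\alpha}(r\cdot\theta)$ behaves as $\Phi_\alpha$ does under quasi-balanced scaling of $\G_d$.
I then check that $r\cdot\underline T\in\mathfrak M_{\Theta_d}$ for $0<r<1$. Write $\Phi_{*,\alpha}(r\cdot\underline T)=\Psi_\alpha(\underline T,r)$. This is holomorphic in $(\alpha,r)$, so a maximum-principle argument in the scaling parameter (as used implicitly for $\G_d$ in property (3) of $\mathfrak{M}_{\G_d}$) gives a strict norm bound. I would accept this as the analogue of item (3) for $\G_d$.
With that in hand, set $g_n(\theta)=g(r_n\cdot\theta)$ for $r_n\uparrow1$. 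Then $\|g_n(\underline T)\|\le\|f\|_{\infty,W}$, and dominated convergence in the functional calculus gives $g_n(\underline T)\to g(\underline T)$ weakly. Since $\underline T$ is subordinate to $W$, $f(\underline T)=g(\underline T)$, so $\|f(\underline T)\|\le\|f\|_{\infty,W}$.

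\emph{Sufficiency.} First restate Theorem \ref{thm_interpolation_Theta_d} scalarly. Let $K_{\boldsymbol\theta}$ be the set of strictly positive definite matrices with unit diagonal satisfying the $\Phi_{*,\alpha}$-analogue of \eqref{eqn_301}. Then the data $\lambda_i$ at $\theta^{(i)}$ is interpolable in $SA(\Theta_d)$ iff $[(1-\lambda_i\overline{\lambda_j})k(i,j)]\ge0$ for all $k\in K_{\boldsymbol\theta}$.
Next define $SA_f(\Theta_d)$, $\|\cdot\|_{\mathfrak M_{\Theta_d}}$ and $\rho_f$ as before. Existence of extremal functions (Lemma \ref{lem_304}) goes through verbatim by normal families and weak-operator limits.

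The main step is the analogue of Lemma \ref{lem_305}, and the delicate point is compactness of $K_{\boldsymbol\theta}$. There one uses that the kernel annihilates $M_iw$ for the diagonal coordinate matrices $M_i$, and must then separate the points $\theta^{(i)}$ by polynomials in these coordinates. For $\G_d$ this came from boundedness of the coordinate multipliers, obtained by expanding $\Phi_\alpha$ at $\alpha=0$.
For $\Theta_d$, expanding $\Phi_{*,\alpha}R_*=Q_*$ at $\alpha=0$ recovers only $\theta_1,\dots,\theta_{d-1},\theta_d^p$, not $\theta_d$. I would first try working with the reduced coordinates $(\theta_1,\dots,\theta_{d-1},\theta_d^p)$ in \eqref{eqn_301}. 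If distinct points share these coordinates, I would handle $\theta_d$ directly by its $p$-th roots. Alternatively, I would check whether the admissibility condition already forces $\ker k$ to be invariant under the diagonal matrix of the $\theta_d^{(i)}$, arguing via the Proposition \ref{prop_Ms}-type recursion applied to $\theta_d^p$ and then taking roots on the finite spectrum. This is where I expect genuine modification beyond ``minor'' changes.

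Once compactness holds, the rest follows Lemma \ref{lem_305}:
\begin{enumerate}
\item minimize the least eigenvalue over $K_{\boldsymbol\theta}$ to get $A\in K_{\boldsymbol\theta}$ and $y\ne0$ satisfying \eqref{eqn_302};
\item build $\underline T$ on the column space of $A$ with $T_i^*A(\cdot,j)=\overline{\theta_i^{(j)}}A(\cdot,j)$;
\item observe that $\underline T\in Q\Theta_d$, that it is subordinate to $\boldsymbol\theta$, and that $\|h(\underline T)\|=\rho_f$.
\end{enumerate}
Finally, take a dense sequence in $W$ and an extremal $h_n$ for each finite truncation. By the hypothesis, $\|h_n\|_{\mathfrak M_{\Theta_d}}\le\|f\|_{\infty,W}$. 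A pointwise-convergent subsequence produces the desired $g$.
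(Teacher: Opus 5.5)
Your route is the paper's: it proves this theorem only by saying the $\G_d$ argument carries over with minor modifications. Your necessity half is fine. For the scaling step, replace the vague maximum-principle remark by the exact identity $\Phi_{*,\alpha}(r\cdot\theta)=r^m\,\Phi_{*,r^m\alpha}(\theta)$, which gives $\|\Phi_{*,\alpha}(r\cdot\underline T)\|\le r^m<1$ for $\underline T\in Q\Theta_d$.

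The sufficiency half has a genuine gap, at exactly the point you flagged, and it cannot be repaired along the lines you suggest. When $p\ge2$, $\Theta_d$ contains distinct points with the same reduced coordinates. Take $z\in\D^d$ with all $z_j\neq0$ and $z'=(e^{2\pi i/m}z_1,z_2,\dots,z_d)$. Their images satisfy $\theta'_j=\theta_j$ for $j<d$ and $\theta'_d=e^{2\pi i/p}\theta_d\neq\theta_d$, so $\Phi_{*,\alpha}(\theta)=\Phi_{*,\alpha}(\theta')$ for every $\alpha$. For the data $\boldsymbol\theta=\{\theta,\theta'\}$ the admissibility condition reduces to $(1-|\Phi_{*,\alpha}(\theta)|^2)k\ge0$. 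Hence $K_{\boldsymbol\theta}=\{\begin{bmatrix}1&c\\ \bar c&1\end{bmatrix}:|c|<1\}$, which is not compact. Its limit at $c=1$ has kernel spanned by $(1,-1)^t$, and this kernel is invariant under neither $\mathrm{diag}(\theta_d,\theta'_d)$ nor its conjugate. Three consequences follow:
\begin{itemize}
\item your alternative (that admissibility forces $\ker k$ to be invariant under the diagonal of the $\theta^{(i)}_d$) is false;
\item taking $p$-th roots cannot recover $\theta_d$ from $\theta_d^p$ on such a spectrum;
\item without an invertible minimizer $A$, the tuple on the column space cannot be defined, since two equal columns would need different eigenvalues for $T_d^*$.
\end{itemize}

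The missing idea is that everything in sight is constant on the fibres of $\pi(\theta)=(\theta_1,\dots,\theta_{d-1},\theta_d^p)$. Let $\theta\neq\theta'$ with $\pi(\theta)=\pi(\theta')$, let $E$ be any idempotent on $\C^2$, and set $T_i=\theta_iE+\theta'_i(I-E)$. Then $\sigma_T(\underline T)=\{\theta,\theta'\}$, $g(\underline T)=g(\theta)E+g(\theta')(I-E)$ for $g$ holomorphic near these points, and $\Phi_{*,\alpha}(\underline T)=\Phi_{*,\alpha}(\theta)I$. So $\underline T\in\mathfrak{M}_{\Theta_d}\subseteq Q\Theta_d$. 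Letting $\|E\|\to\infty$ yields two facts:
\begin{itemize}
\item[(a)] the hypothesis forces $f(\theta)=f(\theta')$ whenever $\theta,\theta'\in W$, because then $\underline T$ is subordinate to $W$;
\item[(b)] every $h\in SA_f(\Theta_d)$ satisfies $h(\theta)=h(\theta')$.
\end{itemize}
Now delete from your dense sequence every point whose reduced coordinates repeat those of an earlier point. For the thinned data the $\G_d$ compactness argument works verbatim on reduced coordinates. Indeed, if $k$ is a singular limit and $kw=0$, then $k\,\mathrm{diag}(\overline{\Phi_{*,\alpha}(\theta^{(i)})})w=0$ for all $\alpha$. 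Differentiating at $\alpha=0$, $\ker k$ is invariant under the conjugate diagonals of $\theta_1,\dots,\theta_{d-1},\theta_d^p$, and these now separate the points. Build the column-space tuple with the full coordinates $\theta^{(j)}$. The limit $h$ then agrees with $f$ on the thinned sequence, hence by (a) and (b) on the whole dense sequence, hence on $W$.
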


	\noindent \textbf{Funding.} The first named author is supported in part by the “Core Research Grant” with Award No. CRG/2023/005223 from Anusandhan National Research Foundation (ANRF) of Govt. of India. The second author is supported by a postdoctoral fellowship (Ref. No. 0204/9(10)/2026-R\&D-II/4805) from the National Board for Higher Mathematics (NBHM), Government of India.

\end{document}